\documentclass{amsart}
\usepackage{amsfonts}
\usepackage{amssymb}
\usepackage{amsrefs}
\usepackage{url}
\usepackage{graphicx}
\usepackage{xcolor}
\usepackage{verbatim}
\usepackage{enumerate}
\usepackage{hyperref}
\usepackage[nameinlink]{cleveref}
\usepackage{subfig}
\usepackage{csvsimple}
\usepackage{tikz}
\usepackage[american]{circuitikz}
\usepackage{pgfplots}
\usepackage{nicefrac} 
\usepackage{mathtools}
\usepackage{mathrsfs}
\usepackage{multirow}
\usepackage{tabstackengine}
\usepackage{array}
\setstacktabbedgap{1.5ex}
\setlength\extrarowheight{3pt}
\usepackage{arydshln}


\newcommand\scalemath[2]{\scalebox{#1}{\mbox{\ensuremath{\displaystyle #2}}}}
\usepackage[acronym]{glossaries}
\newacronym{res}{R}{resistor}
\newacronym{cap}{C}{capacitor}
\newacronym{ind}{L}{inductor}
\newacronym{tra}{T}{transformer}
\newacronym[firstplural=linear matrix inequalities (LMIs)]{lmi}{LMI}{linear matrix inequality}

\newcommand{\abs}[1]{\ensuremath{\left\vert#1\right\vert}}
\newcommand{\beh}{\ensuremath{\mathcal{B}_{\scalemath{.5}{\sqfunof{\begin{array}{c|c}A&B\\\hline{}C&D\end{array}}}}}}
\newcommand{\bm}[1]{\ensuremath{
\scalemath{.8}{\begin{bmatrix}#1\end{bmatrix}
}}}
\newcommand{\cfunof}[1]{\ensuremath{\left\{#1\right\}}}
\newcommand{\funof}[1]{\ensuremath{\left(#1\right)}}
\newcommand{\jw}{\ensuremath{\left(j\omega\right)}}
\newcommand{\Lloc}[1]{\ensuremath{\mathcal{L}_1^{\mathrm{loc}#1}}}
\newcommand{\norm}[1]{\ensuremath{\left\Vert #1 \right\Vert}}
\newcommand{\R}{\ensuremath{\mathbb{R}}}
\newcommand{\s}{\ensuremath{\left(s\right)}}
\newcommand{\sqfunof}[1]{\ensuremath{\left[#1\right]}}
\newcommand{\tm}{\ensuremath{\left(t\right)}}
\newcommand{\vect}[1]{\ensuremath{{#1}}}

\newcommand\zz[1]{\addstackgap[8pt]{$\scalemath{.8}{\begin{bmatrix}#1\end{bmatrix}}$}}
\newcommand\mmfull[4]{\addstackgap[8pt]{$\scalemath{.8}{
\sqfunof{\begin{array}{c;{2pt/2pt}c}
  #1&#2\\\hdashline[2pt/2pt]
  #3&#4
\end{array}
}}$}}
\newcommand\mmfulla[4]{\scalemath{.8}{
\sqfunof{\begin{array}{c;{2pt/2pt}c}
  #1&#2\\\hdashline[2pt/2pt]
  #3&#4
\end{array}
}}}
\newcommand\mmempty[1]{\scalemath{.8}{
\sqfunof{\begin{array}{c;{2pt/2pt}c}
  &\\\hdashline[2pt/2pt]
  &#1
\end{array}
}}}

\DeclareMathOperator{\trace}{tr}

\numberwithin{equation}{section}
\crefformat{equation}{(#2#1#3)}
\crefmultiformat{equation}{(#2#1#3)}%
{ and~(#2#1#3)}{, (#2#1#3)}{ and~(#2#1#3)}

\newtheorem{theorem}{Theorem}

\theoremstyle{remark}
\newtheorem{remark}{Remark}
\newtheorem{problem}{Problem}
\crefname{problem}{problem}{problems}
\newtheorem{example}{Example}

\tikzset{>=stealth}
\usetikzlibrary{calc, intersections, through, decorations.pathreplacing,decorations.markings}

\tikzset{
  on each segment/.style={
    decorate,
    decoration={
      show path construction,
      moveto code={},
      lineto code={
        \path [#1]
        (\tikzinputsegmentfirst) -- (\tikzinputsegmentlast);
      },
      curveto code={
        \path [#1] (\tikzinputsegmentfirst)
        .. controls
        (\tikzinputsegmentsupporta) and (\tikzinputsegmentsupportb)
        ..
        (\tikzinputsegmentlast);
      },
      closepath code={
        \path [#1]
        (\tikzinputsegmentfirst) -- (\tikzinputsegmentlast);
      },
    },
  },
  mid arrow/.style={postaction={decorate,decoration={
        markings,
        mark=at position .5 with {\arrow[#1]{stealth}}
      }}},
}


\pgfplotsset{compat=1.13}
\makeatother
\pgfdeclarelayer{bg}
\pgfsetlayers{bg,main}

\hypersetup{
    colorlinks = true,
    linkcolor = orange!80!black,
    anchorcolor = blue,
    citecolor = orange!80!black,
    filecolor = blue,
    urlcolor = orange!80!black,
    }

\begin{document}

\title[RLCT networks: From simple models to Simple optimal controllers]{Passive and reciprocal networks: From simple models to simple optimal controllers}
\author{Richard Pates}

\thanks{Department of Automatic Control, Lund University, Box 118, SE-221 00, Lund, Sweden. \textit{E-mail:} \href{mailto:richard.pates@control.lth.se}{\texttt{richard.pates@control.lth.se}}}

\thanks{The author is a member of the ELLIIT Strategic Research Area at Lund University. This work was supported by the ELLIIT Strategic Research Area. This project has received funding from VR 2016-04764, SSF RIT15-0091 and ERC grant agreement No 834142.}

\thanks{This work has been submitted to the IEEE for possible publication. Copyright may be transferred without notice, after which this version may no longer be accessible}


\maketitle

\begin{abstract}
Networks constructed out of resistors, inductors, capacitors and transformers form a compelling subclass of simple models. Models constructed out of these basic elements are frequently used to explain phenomena in large-scale applications, from inter-area oscillations in power systems, to the transient behaviour of optimisation algorithms. Furthermore they capture the dynamics of the most commonly applied controllers, including the PID controller. In this paper we show that the inherent structure in these networks can be used to simplify, or even solve analytically, a range of simple optimal control problems. We illustrate these results by designing and synthesising simple, scalable, and globally optimal control laws for solving constrained least squares problems, regulating electrical power systems with stochastic renewable sources, studying the robustness properties of consensus algorithms, and analysing heating networks.
\end{abstract}

\glsresetall
\section{Introduction}
Can simple systems be regulated by simple controllers? Engineering experience indicates that this is often the case. Certainly the overwhelming majority of industrially deployed controllers are PID controllers, many of which are tuned without detailed models \cite{AH01,DM02}. This is often in disappointing contrast with the controllers produced using model based approaches. For example, whilst the properties of $H_2$ and $H_\infty$ methods are often highly desirable in applications, the controllers they produce are often more complex than desired. This is particularly damaging in large-scale applications, such as the control of electrical power systems, where simplicity and scalability are of paramount importance, yet features such as sparsity are notoriously difficult to design for in an optimal fashion \cite{Wit68}.

These shortcomings, coupled with the increasing importance of large-scale problems, has led to a great deal of research into methods for optimising performance under structural constraints. It is not possible to do justice to the rich literature along this line. However some notable themes include the emergence of Quadratic Invariance as a central concept in structured synthesis \cite{RL06}, the use of tools from the theory of large-scale optimisation (for example \cite{LFJ13}), and the exploitation of alternative controller parametrisations that maintain convexity under sparsity constraints \cite{WMD18}. In this note we take a slightly different approach. Rather than imposing structural constraints on the controller (which is difficult and diminishes the level of achievable performance), we instead look for structural features in system models that naturally yield simpler optimal controllers. This is similar in philosophy to the work of \cite{BPD02}, in which the property of spatial invariance is shown to simplify the synthesis and realisation of optimal controllers for a class of optimal control problems. However, rather than using spatial invariance to define our notion of a `simple system', we instead turn to passivity and reciprocity.

Passivity and reciprocity are compelling surrogates for simplicity. On the one hand they are baked into the physical world. The passive and reciprocal systems (under the appropriate definitions \cite{Hug19}) correspond precisely to the systems that can be constructed out of networks of resistors (R), inductors (L), capacitors (C) and transformers (T). These basic elements have analogues in many other domains (for example in both translational and rotational mechanics, thermodynamics and hydraulics \cite{SMR67}), and models constructed out of these basic elements are frequently used to explain phenomena in large-scale applications. For example inter-area oscillations in power systems are often understood by analogy with lightly damped mechanical networks \cite{Kun94}. On the other hand, passivity also forms the backbone of some of the most applicable controller structures and design methods. Perhaps most notably in this regard are PID controllers (with non-negative proportional, integral and derivative gains), which are both passive and reciprocal. Furthermore passive and reciprocal controllers can be implemented without an energy source \cite{FLN16}, and passivity based design is one of the most scalable and applicable large-scale design methods \cite{OSC08}. 

In this paper we explore the simplifications that arise in a range of $H_2$ and $H_\infty{}$ optimal control problems, when the process to be controlled can be modelled by a linear RLCT network. Of particular importance for our work are the signature symmetric passive realisations \cite{Wil72,YT66,Fur83}. These realisations are the central focus of Part II of Willems' celebrated '72 paper, where it is shown that if a passive and reciprocal network has dynamics described by a proper transfer function, then it has a controllable and observable state-space realisation
\[
\tfrac{d}{dt}x=Ax+Bu,\,y=Cx+Du,
\]
where in addition the matrices $A,B,C$ and $D$ satisfy \cite[Theorem 7]{Wil72}
\begin{equation}\label{eq:introst}
\bm{\Sigma_{\mathrm{int}}&0\\0&\Sigma_{{\mathrm{ext}}}}\bm{-A&-B\\C&D}=\bm{-A&-B\\C&D}^{\mathsf{T}}\bm{\Sigma_{\mathrm{int}}&0\\0&\Sigma_{\mathrm{ext}}},\;\bm{-A&-B\\C&D}+\bm{-A&-B\\C&D}^{\mathsf{T}}\succeq{}0.
\end{equation}
In the above $\Sigma_{\mathrm{int}}$ and $\Sigma_{\mathrm{ext}}$ are signature matrices (diagonal matrices with entries equal to $\pm{}1$), often referred to as the internal and external signatures respectively. As we shall see, the structure of $\Sigma_{\mathrm{int}}$ closely reflects the internal structure of an RLCT network (in particular the numbers of capacitors and inductors used in the network), and in special cases (such as LCT networks) the structure in \cref{eq:introst} simplifies further. From the perspective of optimal control problems, the power of this viewpoint is that it allows us to think of our useful modelling class purely in terms of structured matrices. This can be used to simplify, or even solve analytically, the Riccati equations and \glspl{lmi} arising in common optimal control problems. Furthermore it can provide additional insights into the synthesis of optimal controllers with distributed structures that are not captured by conventional notions such as sparsity. 

There is however a catch. Even deceptively simple RLC networks, such as the Bott-Duffin network in \Cref{fig:bottduffin}, may not be controllable \cite{CWB03,Wil04a,Hug17b}. One consequence of this is that we cannot deduce the existence of a structured realisation in the form of \cref{eq:introst} for these networks using classical results \cite{Wil72,YT66,Fur83,AV06}. This muddies the waters considerably. We would like to abstract our study of simple systems into the study of the algebra of structured matrices, but seemingly there is a gap between \cref{eq:introst} and our objects of interest. This bug, which strikes at the very heart of systems theory, was one of the driving forces behind another of Willems' major contributions, namely the conception of the behavioral approach \cite{PW98}. Critically for our purposes, as shown in \cite{Hug19} through the use of behavioral methods, RLCT networks are equivalently described by state-space models as structured in \cref{eq:introst}. That is, every state-space model as structured by \cref{eq:introst}, without any assumptions on observability and controllability of \funof{C,A} and \funof{A,B}, corresponds to the dynamical model of an RLCT network, and vice versa.

The rest of this paper is structured as follows. We begin by reviewing RLCT networks. The objective is to give both a brief introduction to the modelling of electrical networks, and also a complete characterisation of the additional structural features in networks built only out of subsets of the RLCT elements. The main result of this section is \Cref{tab:1}, which gives a characterisations akin to \cref{eq:introst} for these networks (\Cref{thm:2}). We also show how to extend \cref{eq:introst} so that the internal behavior of RLCT networks can also be algebraically characterised (\Cref{thm:1}).

We then turn to optimal control problems. We first show that the structure in \cref{eq:introst} typically allows the computational burden in standard $H_2$ and $H_\infty$ problems to be halved. We then show that for the subclasses of the RLCT networks considered in \Cref{tab:1}, several natural optimal control problems have analytical solutions with desirable controller implementations. We illustrate these results by designing and synthesising simple, scalable, and globally optimal control laws for solving constrained least squares problems, regulating electrical power systems with stochastic renewable sources, studying the robustness properties of consensus algorithms, and analysing heating networks.

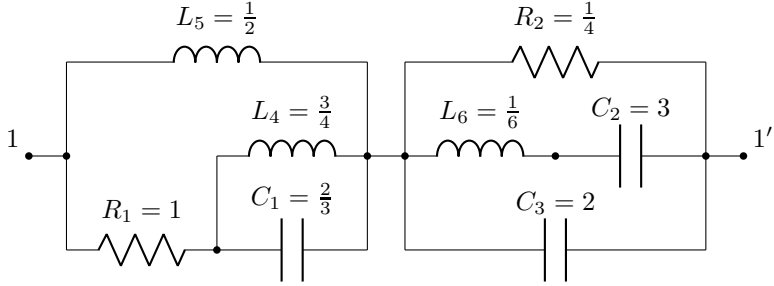
\begin{figure}
\centering
\begin{tikzpicture}[font=\normalsize]

  \draw (-0.5,-0.75) -- (0,-0.75);
  \draw (0,-2) -- (0,0.5);
  \draw (2,-2) -- (2,-0.75);
  \draw (4,-2) -- (4,0.5);
  \draw (4,-0.75) -- (4.5,-0.75);
  \draw (4.5,-2) -- (4.5,0.5);
  \draw (8.5,-2) -- (8.5,0.5);
  \draw (8.5,-0.75) -- (9,-0.75);

  \fill[black] (-0.5,-0.75) circle (0.05) node[above left] {$1$};
  \fill[black] (2,-2) circle (0.05);
  \fill[black] (9,-0.75) circle (0.05) node[above right] {$1'$};
  \fill[black] (4,-0.75) circle (0.05);
  \fill[black] (4.5,-0.75) circle (0.05);
  \fill[black] (6.5,-0.75) circle (0.05);
  \fill[black] (0,-0.75) circle (0.05);
  \fill[black] (8.5,-0.75) circle (0.05);

  \draw (0,0.5) to [L] (4,0.5);
  \node[above, yshift=0.25cm] at (2,0.5) {$L_5 = \tfrac{1}{2}$};

  \draw (2,-0.75) to [L] (4,-0.75);
  \node[above, yshift=0.25cm] at (3,-0.75) {$L_4 = \tfrac{3}{4}$};

  \draw (4.5,-0.75) to [L] (6.5,-0.75);
  \node[above, yshift=0.25cm] at (5.5,-0.75) {$L_6 = \tfrac{1}{6}$};

  \draw (0,-2) to [R] (2,-2);
  \node[above, yshift=0.25cm] at (1,-2) {$R_1 = 1$};

  \draw (4.5,0.5) to [R] (8.5,0.5);
  \node[above, yshift=0.25cm] at (6.5,0.5) {$R_2 = \tfrac{1}{4}$};

  \draw (2,-2) to [C] (4,-2);
  \node[above, yshift=0.35cm] at (3,-2) {$C_1 = \tfrac{2}{3}$};

  \draw (6.5,-0.75) to [C] (8.5,-0.75);
  \node[above, yshift=0.35cm] at (7.5,-0.75) {$C_2 = 3$};

  \draw (4.5,-2) to [C] (8.5,-2);
  \node[above, yshift=0.35cm] at (6.5,-2) {$C_3 = 2$};

\end{tikzpicture}
\caption[]{\label{fig:bottduffin}Example of an electrical network with an uncontrollable behavior. This example is taken from \cite{Hug19} (and fixes a minor typo). The behavior of this network does not admit a controllable and observable state-space realisation. However it does admit a signature symmetric realisation in the form of \cref{eq:introst}:\vspace{.5cm}
\newline{}
\begin{minipage}{\linewidth}
     $$
     \begin{aligned}
        \tfrac{d}{dt}x&=\bm{-2&0&0&-\sqrt{3}&0&0\\
        0&0&0&-\sqrt{2}&0&0\\
        0&0&0&0&\sqrt{2}&-\sqrt{3}\\
        \sqrt{3}&\sqrt{2}&0&0&0&0\\
        0&0&-\sqrt{2}&0&0&0\\
        0&0&\sqrt{3}&0&0&-2}x+\bm{-\sqrt{2}\\0\\0\\\sqrt{3/2}\\0\\1/\sqrt{2}}i,\\
        v&=\bm{\sqrt{2}&0&0&\sqrt{3/2}&0&1/\sqrt{2}}x+i.
      \end{aligned}
     $$
  \end{minipage}
\vspace{.2cm}\newline{}
\noindent{}
In the above $\funof{i,v}$ denote the current through, and the voltage across, the terminal pair \funof{1,1'}.
}
\end{figure}

\section{Notation}

$M\succ{}0$ denotes that a matrix $M$ is symmetric and positive definite, and $M\succeq{}0$ that it is symmetric and positive semi-definite. The unique positive semi-definite square root of $M\succeq{}0$ is denoted $M^{\frac{1}{2}}$. The trace of a matrix $M$ is denoted by $\trace{M}$. A signature matrix is a diagonal matrix with diagonal entries equal to $\pm1$. A permutation matrix is a square matrix in which every row and column has exactly one non-zero entry equal to 1. $I$ denotes the identity matrix throughout, and $0$ a matrix of zeros. A pair of matrices $\funof{A,B}$, where $A\in\R^{n\times{}n}$, is said to be controllable if the matrix
\[
\bm{B&AB&\cdots{}&A^{n-1}B}
\]
has full rank. The pair $\funof{A,C}$ is said to be observable if $\funof{A^{\mathsf{T}},C^{\mathsf{T}}}$ is controllable. $\Lloc{,n}$ denotes the \emph{n}-vector valued locally integrable functions. 

Given a state-space model
\[
\tfrac{d}{dt}x=Ax+Bu,\;y=Cx+Du,
\]
the transfer function from $u$ to $y$ is defined to be $T_{yu}\s=C\funof{sI-A}^{-1}B+D$. Whenever $T_{yu}$ is stable (and $D=0$ in the $H_2$ case), the $H_2$ norm and $H_\infty$ norm of $T_{yu}$ are defined to equal
\[
\norm{T_{yu}}_{H_2}=\funof{\frac{1}{2\pi}\int_{-\infty{}}^\infty{}\trace\funof{T_{yu}\jw^*T_{yu}\jw}\,d\omega}^{\frac{1}{2}}
\]
and
\[
\norm{T_{yu}}_{H_\infty}=\sup_{\omega\in\R}\norm{T_{yu}\jw}_2,
\]
where $\norm{\cdot}_2$ denotes the induced matrix 2-norm. 

Wherever possible uppercase letters are used to represent matrices with elements in $\R$, and lowercase letters vectors of signals with elements in $\Lloc{}$. In general the notation $M_{lk}$ and $x_k$ is \emph{not} used to index elements of matrices and vectors (i.e. $M_{lk}$ and $x_k$ do not refer to the \emph{lk}-th and \emph{k}-th elements of $M$ and $x$ respectively); instead indices on vectors and matrices will be used to refer to particular sub-blocks, though the meaning of any indexing will normally be explicitly defined. On occasion, dashed lines will be introduced to clarify dimensional compatibility in equations involving block matrices. For example, the equation
\[
\mmfulla{A}{B}{C}{0}=
\scalemath{.8}{
\sqfunof{\begin{array}{cc;{2pt/2pt}c}
  0 & X & 0\\
  -X^{\mathsf{T}} & 0 & Y\\\hdashline[2pt/2pt]
  0 & Y^{\mathsf{T}} & 0
\end{array}
}}
\quad\Rightarrow{}\quad{}A=\bm{0 &  X\\ -X^{\mathsf{T}} & 0}\;\;\text{and}\;\;B=C^{\mathsf{T}}=\bm{0\\Y}.
\]

\section{Network Behaviors with Internally Symmetric Realisations}

The objective of this section is to show how to describe the dynamics of linear RLCT networks using structured state-space models. Establishing this connection will form the central ingredient in our results on the optimal control of systems that can be modelled using RLCT networks. As we will see, despite their conceptual simplicity, translating the structural features of the dynamical models of RLCT networks into the state-space setting is not entirely straightforward, and will require quite a heavy notational burden. However it should be emphasised that this notation will not be required in the subsequent section on the design of optimal controllers, where only the structure in the $A$, $B$, $C$ and $D$ matrices in the state-space descriptions will be used. The reader content with their understanding of electrical networks can freely skip ahead after consulting \Cref{tab:1}, and a more leisurely (and more complete) introduction to the covered topics can be found in \cite{Hug17,AV06}.

\subsection{Preliminaries on Modelling RLCT Networks}

\begin{figure}

\ctikzset{bipoles/length=.56cm}
\ctikzset{bipoles/thickness=1}
\centering
\begin{tikzpicture}[font=\normalsize,>=stealth]

  \draw (4,11) -- (0,11);
  \draw (4,10) -- (3,10);
  \draw (4,9) -- (0,9);
  \draw (4,8) -- (3,8);
  \draw (4,7) -- (1,7);
  \draw (4,6) -- (3,6);
  \draw (4,5) -- (2,5);
  \draw (4,4) -- (3.5,4);
  \draw (4,3) -- (3.5,3);
  \draw (4,2) -- (1,2);
  \draw (4,1) -- (3,1);
  \draw (4,0) -- (2,0);

  \draw (0,11) -- (0,9);
  \draw (1,7) -- (1,6);
  \draw (1,5) -- (1,2);
  \draw (2,2) -- (2,0);
  \draw (3.5,4) -- (3.5,3);

  \draw (0,9) to [R] node[yshift=0.85cm, xshift=0.5cm] {$R_1$} (0,6);
  \draw (2,5) to [R] node[yshift=.85cm, xshift=0.5cm] {$R_2$} (2,2);

  \draw (-0.5,6) -- (1,6);
  \draw (-0.5,5) -- (1,5);
  \fill[black] (-0.5,6) circle (0.05);
  \fill[black] (-0.5,5) circle (0.05);

  \draw[->] (-0.5,5.1) -- node[left] {$v_{\mathrm{ext}}$} (-0.5,5.9);
  \draw[->] (-1.1,6) node[left] {$i_{\mathrm{ext}}$} -- (-0.6,6);
  \draw[<-] (-1.1,5) -- (-0.6,5);

  \draw (3,10) -- (3,9.1);
  \draw (3,8.9) -- (3,7.1);
  \draw (3,6.9) -- (3,2.1);
  \draw (3,1.9) -- (3,1);

  \draw (3,8.9) arc[start angle=-90, end angle=90, radius=.1];
  \draw (3,6.9) arc[start angle=-90, end angle=90, radius=.1];
  \draw (3,1.9) arc[start angle=-90, end angle=90, radius=.1];

  \fill[black] (0,11) circle (0.05);
  \fill[black] (0,9) circle (0.05);
  \fill[black] (0,6) circle (0.05);
  \fill[black] (2,5) circle (0.05);
  \fill[black] (2,2) circle (0.05);
  \fill[black] (3,8) circle (0.05);
  \fill[black] (3,6) circle (0.05);
  \fill[black] (3,5) circle (0.05);

  \fill[black] (4,11) circle (0.05);
  \fill[black] (4,10) circle (0.05);
  \fill[black] (4,9) circle (0.05);
  \fill[black] (4,8) circle (0.05);
  \fill[black] (4,7) circle (0.05);
  \fill[black] (4,6) circle (0.05);
  \fill[black] (4,5) circle (0.05);
  \fill[black] (4,4) circle (0.05);
  \fill[black] (4,3) circle (0.05);
  \fill[black] (4,2) circle (0.05);
  \fill[black] (4,1) circle (0.05);
  \fill[black] (4,0) circle (0.05);

  \draw[orange!80!black] (5.2,11) to [C, color=orange!80!black] node[yshift=0.27cm, xshift=0.25cm,
right] {$C_1$} (5.2,10);
  \fill[orange!80!black] (5.2,10) circle (0.05);
  \fill[orange!80!black] (5.2,11) circle (0.05);
  \draw[->] (4,10.1) -- node[right] {$v_{\mathrm{int},1}$} (4,10.9);
  \draw[->] (4.6,11) node[above] {$i_{\mathrm{int},1}$} -- (4.1,11);
  \draw[<-] (4.6,10) -- (4.1,10);

  \draw[orange!80!black] (5.2,9) to [L, color=orange!80!black] node[yshift=0.3cm, xshift=0.25cm,
right] {$L_4$} (5.2,8);
  \fill[orange!80!black] (5.2,8) circle (0.05);
  \fill[orange!80!black] (5.2,9) circle (0.05);
  \draw[->] (4,8.1) -- node[right] {$v_{\mathrm{int},4}$} (4,8.9);
  \draw[->] (4.6,9) node[above] {$i_{\mathrm{int},4}$} -- (4.1,9);
  \draw[<-] (4.6,8) -- (4.1,8);

  \draw[orange!80!black] (5.2,7) to [L, color=orange!80!black] node[yshift=0.3cm, xshift=0.25cm,
right] {$L_5$} (5.2,6);
  \fill[orange!80!black] (5.2,6) circle (0.05);
  \fill[orange!80!black] (5.2,7) circle (0.05);
  \draw[->] (4,6.1) -- node[right] {$v_{\mathrm{int},5}$} (4,6.9);
  \draw[->] (4.6,7) node[above] {$i_{\mathrm{int},5}$} -- (4.1,7);
  \draw[<-] (4.6,6) -- (4.1,6);

  \draw[orange!80!black] (5.2,5) to [L, color=orange!80!black] node[yshift=0.3cm, xshift=0.25cm,
right] {$L_6$} (5.2,4);
  \fill[orange!80!black] (5.2,4) circle (0.05);
  \fill[orange!80!black] (5.2,5) circle (0.05);
  \draw[->] (4,4.1) -- node[right] {$v_{\mathrm{int},6}$} (4,4.9);
  \draw[->] (4.6,5) node[above] {$i_{\mathrm{int},6}$} -- (4.1,5);
  \draw[<-] (4.6,4) -- (4.1,4);

  \draw[orange!80!black] (5.2,3) to [C, color=orange!80!black] node[yshift=0.27cm, xshift=0.25cm,
right] {$C_2$} (5.2,2);
  \fill[orange!80!black] (5.2,2) circle (0.05);
  \fill[orange!80!black] (5.2,3) circle (0.05);
  \draw[->] (4,2.1) -- node[right] {$v_{\mathrm{int},2}$} (4,2.9);
  \draw[->] (4.6,3) node[above] {$i_{\mathrm{int},2}$} -- (4.1,3);
  \draw[<-] (4.6,2) -- (4.1,2);

  \draw[orange!80!black] (5.2,1) to [C, color=orange!80!black] node[yshift=0.27cm, xshift=0.25cm,
right] {$C_3$} (5.2,0);
  \fill[orange!80!black] (5.2,0) circle (0.05);
  \fill[orange!80!black] (5.2,1) circle (0.05);
  \draw[->] (4,0.1) -- node[right] {$v_{\mathrm{int},3}$} (4,0.9);
  \draw[->] (4.6,1) node[above] {$i_{\mathrm{int},3}$} -- (4.1,1);
  \draw[<-] (4.6,0) -- (4.1,0);

  \fill[black,opacity=0.1] (-0.25,-0.25) rectangle (3.75,11.25);

\end{tikzpicture}
\caption{\label{fig:reactex}Illustration of the reactance extraction representation for the network from \Cref{fig:bottduffin}. First all the wires, resistors and transformers are collected into a box (the grey region). The inner workings of this box are accessible through two sets of terminal pairs. The internal terminals are drawn on the right. These are associated with a set of internal currents $i_{\mathrm{int}}$ and voltages $v_{\mathrm{int}}$, and the reactive elements (the inductors and capacitors) are connected across these terminal pairs, by connecting the orange elements.
The second set of terminal pairs on the left are associated with the external driving point currents and voltages as shown. It is through the driving points that the electrical network interacts with the outside world. Note that in this example there is only one pair of external terminals, but in general there can be any number. The indexing of the internal currents and voltages has been chosen to be consistent with \cref{eq:beh1}.}
\end{figure}
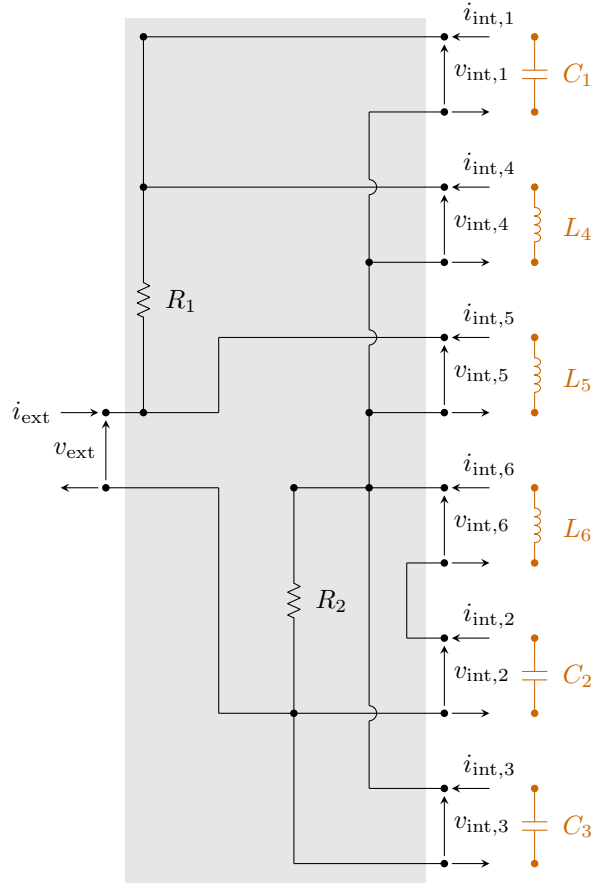

In this paper we take a reactance extraction approach to the analysis and synthesis of RLCT networks, as illustrated in \Cref{fig:reactex}. In this setting, an electrical network consists of three main parts:
\begin{enumerate}
  \item A box containing the interconnection of wires, resistors and transformers. The inner workings of this box are accessible through two sets of terminal pairs.
  \item The reactive elements (capacitors and inductors). Each reactive element is connected across a single terminal pair in the first set of terminal pairs.
  \item The driving points. This is really just another name for the second set of terminal pairs. It is through the driving points that the electrical network interacts with the outside world.
\end{enumerate}
In order to describe the dynamics of an RLCT network, we first associate each of the terminal pairs with a through current, and an across voltage. We call the currents and voltages associated with the reactive elements the internal currents and voltages, and denote them with the vectors of signals $i_{\mathrm{int}}\tm\in\Lloc{,\funof{n_{\mathrm{C}}+n_{\mathrm{L}}}}$ and $v_{\mathrm{int}}\tm\in\Lloc{,\funof{n_{\mathrm{C}}+n_{\mathrm{L}}}}$ respectively. Here $n_{\mathrm{C}}$ denotes the number of capacitors, and $n_{\mathrm{L}}$ the number of inductors. Similarly we call the currents and voltages associated with the driving points the external currents and voltages, and denote them with $i_{\mathrm{ext}}\tm\in\Lloc{,n_{\mathrm{ext}}}$ and $v_{\mathrm{ext}}\tm\in\Lloc{,n_{\mathrm{ext}}}$, where $n_{\mathrm{ext}}$ denotes the number of driving points. The mathematical model of the network is then given by the element laws for the reactive elements, and the constraints imposed by the box, which can all be written in terms of these variables. More specifically, these equations always take the form
\begin{equation}\label{eq:beh1}
\begin{aligned}
&\begin{cases}
C_k\tfrac{d}{dt}v_{\mathrm{int},k}=i_{\mathrm{int},k}&\text{if $k\in\cfunof{1,\ldots{},n_{\mathrm{C}}}$,}\\
L_k\tfrac{d}{dt}i_{\mathrm{int},k}=v_{\mathrm{int},k}&\text{if $k\in\cfunof{n_{\mathrm{C}}+1,\ldots{},n_{\mathrm{L}}+n_{\mathrm{C}}}$,}\\
\end{cases}\\
&\qquad\qquad{}X\bm{i_{\mathrm{int}}\\i_{\mathrm{ext}}}=Y\bm{v_{\mathrm{int}}\\v_{\mathrm{ext}}},
\end{aligned}
\end{equation}
where $i_{\mathrm{int},k}$ and $v_{\mathrm{int},k}$ denote the \emph{k}-th elements of $i_{\mathrm{int}}$ and $v_{\mathrm{int}}$ respectively. The first equation in the above specifies the element laws for the capacitors and inductors, where it is assumed without loss of generality that the first $n_{\mathrm{C}}$ internal terminal pairs have a capacitor connected (with inductors connected across the remaining internal terminal pairs). The second equation specifies the constraints imposed by the box of wires, resistors and transformers. In particular $X,Y\in\R^{n\times{}n}$, where $n=n_{\mathrm{C}}+n_{\mathrm{L}}+n_{\mathrm{ext}}$, are square matrices, with entries determined by the element laws for the resistors and transformers, the network topology and Kirchhoff's laws. This high level description of these constraints is sufficient to describe our main results, so we will not give further details here. However more information about the specific structure in these matrices can be found in the proof of \Cref{thm:1} (see in particular \cref{eq:p1e2}), and a complete description can also be found in \cite[Theorem 4]{Hug17}. Note also that the through currents and across voltages associated with the resistor and transformer elements do not appear explicitly in this representation. For details on how these relate to $i_{\mathrm{int}},i_{\mathrm{ext}},v_{\mathrm{int}}$ and $v_{\mathrm{ext}}$, see \cite[\S{5}]{Hug17}.

\subsection{State-space Models Descriptions of RLCT Network Behaviors}

\begin{table}
\centering

\caption{\label{tab:1}Structure of the realisations for networks constructed out of resistors (R), inductors (L), capacitors (C) and transformers (T), as guaranteed by \Cref{thm:2}. The first column gives the types of elements used (for example {\bf{RT}} indicates that the network is constructed using only resistors and transformers). The second column gives the internal signature of the realisation. The third and fourth columns give the structural properties in the matrices $A,B,C$ and $D$ that describe the behavior of the network constructed out of the given types of element.
}

\begin{tabular}{cc|c|c|c}
\hline\hline
& & $\mathbf{\Sigma_{\mathrm{int}}}$ & $\mathbf{\mmfull{A}{B}{C}{D}}$ & \textbf{Conditions} \\ \hline\hline
\multirow{4}{*}{\rotatebox{90}{\textbf{Lossless Networks}$\qquad\qquad$}} & \textbf{T}   & N/A  &
   \addstackgap[8pt]{$\scalemath{.8}{
  \sqfunof{\begin{array}{c;{2pt/2pt}cc}
    \phantom{0} &  & \\\hdashline[2pt/2pt]
     & 0 & D_{12} \\
     & -D_{12}^{\mathsf{T}} & 0
  \end{array}
  }}$} &
   -- \\ \cline{2-5}
& \textbf{LT}  & $-I$ &
    
  \addstackgap[8pt]{$\scalemath{.8}{
  \sqfunof{\begin{array}{c;{2pt/2pt}cc}
    0 & B_1 & 0\\\hdashline[2pt/2pt]
    B_1^{\mathsf{T}} & 0 & D_{12} \\
    0 & -D_{12}^{\mathsf{T}} & 0
  \end{array}
  }}$}
 &
   -- \\ \cline{2-5}
& \textbf{CT}  & $I$  &
  \addstackgap[8pt]{$\scalemath{.8}{
  \sqfunof{\begin{array}{c;{2pt/2pt}cc}
    0 & 0 & B_2\\\hdashline[2pt/2pt]
    0 & 0 & D_{12}\\
    B_2^{\mathsf{T}} & -D_{12}^{\mathsf{T}} & 0
  \end{array}
  }}$}
&
   --\\ \cline{2-5}
& \textbf{LCT} &
   $\zz{I & 0 \\ 0 & -I}$ &

   \addstackgap[8pt]{$\scalemath{.8}{
    \sqfunof{\begin{array}{cc;{2pt/2pt}cc}
      0 & A_{12} & 0 & B_{12}\\
      -A_{12}^{\mathsf{T}} & 0 & B_{21} & 0\\\hdashline[2pt/2pt]
      0 & B_{21}^{\mathsf{T}} & 0 & D_{12} \\
      B_{12}^{\mathsf{T}} & 0 & -D_{12}^{\mathsf{T}} & 0
    \end{array}
    }}$}
 &
   -- \\ \hline\hline
\multirow{3}{*}{\rotatebox{90}{\textbf{Lossy Networks}$\quad\quad$}} & \textbf{RT}  & N/A &
   \addstackgap[8pt]{$\scalemath{.8}{
  \sqfunof{\begin{array}{c;{2pt/2pt}cc}
    \phantom{0} &  & \\\hdashline[2pt/2pt]
     & D_{11} & D_{12} \\
     & -D_{12}^{\mathsf{T}} & D_{22}
  \end{array}
  }}$} &
   $D_{11} \succeq 0, D_{22} \succeq 0$\\ \cline{2-5}
& \textbf{RLT} & $-I$ &
   \addstackgap[8pt]{$\scalemath{.8}{
  \sqfunof{\begin{array}{c;{2pt/2pt}cc}
    A & B_1 & B_2 \\\hdashline[2pt/2pt]
    B_1^{\mathsf{T}} & D_{11} & D_{12} \\
    -B_2^{\mathsf{T}} & -D_{12}^{\mathsf{T}} & D_{22}
  \end{array}
  }}$} &
   $\zz{-A & B_2 \\ B_2^{\mathsf{T}} & D_{22}} \succeq 0, D_{11} \succeq 0$\\ \cline{2-5}
& \textbf{RCT} &
   $I$ &
   \addstackgap[8pt]{$\scalemath{.8}{
  \sqfunof{\begin{array}{c;{2pt/2pt}cc}
    A & B_1 & B_2 \\\hdashline[2pt/2pt]
    -B_1^{\mathsf{T}} & D_{11} & D_{12} \\
    B_2^{\mathsf{T}} & -D_{12}^{\mathsf{T}} & D_{22}
  \end{array}
  }}$} &
   $\zz{-A & B_1 \\ B_1^{\mathsf{T}} & D_{11}} \succeq 0, D_{22} \succeq 0$\\ \hline\hline

\end{tabular}

\end{table}

In this subsection we will give a state-space characterisation of the models of RLCT networks, that includes all the external and internal currents and voltages in \cref{eq:beh1}. Our objective is to do this through a set of algebraic constraints on the matrices that define the state-space models, along the lines of \cref{eq:introst}. This will be our first step towards the structured state-space descriptions in \Cref{tab:1}. However in order to capture the internal structure in these networks, we will need to introduce several additional matrices beyond those in \cref{eq:introst}. We will see in the next subsection that if only the external currents and voltages are to be described (which is all we require when studying optimal control problems), only one of these additional matrices is needed, and a representation that is far closer to \cref{eq:introst} can be given.

The model in \cref{eq:beh1}, just like a state-space model, is just a set of differential and algebraic equations. We can therefore productively think about this model in terms of the solutions to these equations. This is precisely the behavioral perspective. The set of locally integrable functions $\Lloc{,n}$ provides a rich enough solution set, and so we say that the behavior of an RLCT network is equal to the set of locally integrable functions that solve \cref{eq:beh1} (where differentiation is interpreted in the weak sense). This set takes the general form
\begin{equation}\label{eq:beh11}
\mathcal{B}=\cfunof{\funof{i,v}\in\Lloc{,n}\times{}\Lloc{,n}:G\funof{\tfrac{d}{dt}}i=H\funof{\tfrac{d}{dt}}v},
\end{equation}
where $G$ and $H$ are matrices of polynomials. The entries of these matrices are determined by \cref{eq:beh1}, and $i$ and $v$ are the vectors of external and internal currents, and external and internal voltages
\[
i=\bm{i_{\mathrm{int}}\\i_{\mathrm{ext}}},\;v=\bm{v_{\mathrm{int}}\\v_{\mathrm{ext}}}.
\]
State-space models can be treated in a similar fashion. We will have repeated use for this, and so introduce the shorthand 
\[
\beh=\cfunof{\funof{x,u,y}\in\Lloc{,p}\times{}\Lloc{,n}\times{}\Lloc{,m}:\tfrac{d}{dt}x=Ax+Bu,y=Cx+Du}
\]
to denote the behavior of a state-space model defined by matrices $A,B,C$ and $D$. Our objective is now, given a behavior $\mathcal{B}$ of an RLCT network, to find a behavior defined by a state-space model
\begin{equation}\label{eq:ssbeh}
\tilde{\mathcal{B}}=\cfunof{\funof{i,v}:\funof{x,u,y}\in\beh{},\bm{i\\v}=F\bm{x\\u\\y}},
\end{equation}
such that $\mathcal{B}=\mathcal{\tilde{B}}$ (up to a set of measure zero). In the above the matrix $F$ specifies a linear transformation between the state-space variables and the external and internal current and voltage variables in the RLCT network. Our intention is to identify the structural features that must be present in the matrices $A,B,C,D$ and $F$ so that every RLCT network corresponds to such a structured state-space model, and vice versa.

The following theorem shows that the full behavior of the RLCT networks with $n_{\mathrm{ext}}$ driving points, $n_{\mathrm{C}}$ capacitors, and $n_{\mathrm{L}}$ inductors, can be characterised through \cref{eq:ssbeh} in terms of a set of algebraic conditions on six matrices: $P\in\R^{n\times{}n},\Sigma_{\mathrm{int}}^\dagger{}\in\R^{n_{\mathrm{int}}^\dagger\times{}n_{\mathrm{int}}^\dagger{}},\Sigma_{\mathrm{int}}\in\R^{n_{\mathrm{int}}\times{}n_{\mathrm{int}}},\Sigma_{\mathrm{ext}}\in\R^{n_{\mathrm{ext}}\times{}n_{\mathrm{ext}}},$
\[
\bm{-A&-B\\C&D}\in\R^{\funof{n_{\mathrm{int}}+n_{\mathrm{ext}}}\times{}\funof{n_{\mathrm{int}}+n_{\mathrm{ext}}}}\text{ and }\bm{\Theta&\Gamma^{\mathsf{T}}\\\Gamma&\Phi}\in\R^{\funof{n_{\mathrm{int}}^\dagger+n_{\mathrm{int}}}\times{}\funof{n_{\mathrm{int}}^\dagger+n_{\mathrm{int}}}},
\]
where $n_{\mathrm{int}}^\dagger{}+n_{\mathrm{int}}=n_{\mathrm{C}}+n_{\mathrm{L}}$ and $n=n_{\mathrm{C}}+n_{\mathrm{L}}+n_{\mathrm{ext}}$. These are the conditions \textit{(2)a)--d)} in the theorem statement. The significance of this result is that it shows that when discussing the RLCT networks, we may equivalently reason in terms these six structured matrices. The result depends on a function $\mathcal{F}\funof{\cdot{}}$. The details of this function are unimportant for the rest of this paper and so it has been placed in \cref{eq:monster} below the theorem statement due to its alarming size.

\begin{theorem}\label{thm:1}
Let
\[
\mathcal{B}=\cfunof{\funof{\vect{i},\vect{v}}\in\Lloc{,n}\times{}\Lloc{,n}:G\funof{\tfrac{d}{dt}}\vect{i}=H\funof{\tfrac{d}{dt}}\vect{v}},
\]
where $G$ and $H$ are matrices of polynomials. Given any non-negative integers $n_{\mathrm{L}},n_{\mathrm{C}},n_{\mathrm{ext}}$ such that $n_{\mathrm{L}}+n_{\mathrm{C}}+n_{\mathrm{ext}}=n$, the following are equivalent:
\begin{enumerate}
  \item $\mathcal{B}$ is the behavior of an RLCT network constructed with $n_{\mathrm{L}}$ inductors, $n_{\mathrm{C}}$ capacitors and $n_{\mathrm{ext}}$ driving points.
  \item 
  \[
  \mathcal{B}=\cfunof{\funof{\vect{i},\vect{v}}:
  \funof{\vect{x},\vect{u},\vect{y}}\in{}\beh,
  \bm{\vect{i}\\\vect{v}}=F
    \bm{\vect{x}\\\vect{u}\\\vect{y}}
    },
  \]
  where
  \[
    F=\mathcal{F}\funof{\bm{-A&-B\\C&D},\bm{\Theta&\Gamma{}\\\Gamma^{\mathsf{T}}&\Phi},\bm{-\Sigma_{\mathrm{int}}^\dagger{}&0&0\\0&\Sigma_{\mathrm{int}}&0\\0&0&\Sigma_{\mathrm{ext}}},P},
  \]
  and
  \begin{enumerate}[a)]
    \item
    $
    \bm{\Sigma_{\mathrm{int}}&0\\0&\Sigma{}_{\mathrm{ext}}}\bm{-A&-B\\C&D}\!=\!\bm{-A&-B\\C&D}^{\mathsf{T}}\!\bm{\Sigma_{\mathrm{int}}&0\\0&\Sigma{}_{\mathrm{ext}}},\,\bm{-A&-B\\C&D}\!+\!\bm{-A&-B\\C&D}^{\mathsf{T}}\!\!\succeq{}\!0;
    $
    \item $P\in\R^{n\times{}n}$ is a permutation matrix;
    \item $\Sigma_{\mathrm{int}}^\dagger{}\in\R^{n_{\mathrm{int}}^\dagger{}\times{}n_{\mathrm{int}}^\dagger{}},\Sigma_{\mathrm{int}}\in\R^{n_{\mathrm{int}}\times{}n_{\mathrm{int}}},\Sigma_{\mathrm{ext}}\in\R^{n_{\mathrm{int}}\times{}n_{\mathrm{int}}}$ are signature matrices where $n_{\mathrm{int}}^\dagger{}+n_{\mathrm{int}}=n_{\mathrm{L}}+n_{\mathrm{C}}$ and $\trace{\Sigma_{\mathrm{int}}^\dagger{}}+\trace{\Sigma_{\mathrm{int}}}=n_{\mathrm{C}}-n_{\mathrm{L}}$;
    \item
    $
    \bm{\Sigma_{\mathrm{int}}^\dagger{}&0\\0&\Sigma{}_{\mathrm{int}}}\bm{\Theta&\Gamma{}^{\mathsf{T}}\\\Gamma&\Phi}=\bm{\Theta&\Gamma{}^{\mathsf{T}}\\\Gamma&\Phi}^{\mathsf{T}}\bm{\Sigma_{\mathrm{int}}^\dagger{}&0\\0&\Sigma{}_{\mathrm{int}}},\;\bm{\Theta&\Gamma{}^{\mathsf{T}}\\\Gamma&\Phi}\succ{}0.
    $
  \end{enumerate}
\end{enumerate}
\end{theorem}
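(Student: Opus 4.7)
The plan is to prove the equivalence by combining the reactance-extracted description \cref{eq:beh1} with the behavioral extension of Willems' signature-symmetric realisation result established in \cite{Hug19}. The structural intuition is that conditions \emph{(a)} and \emph{(d)} encode passivity and reciprocity of two coupled subsystems: the $(A,B,C,D)$ block describes the dynamics visible at the driving points together with those reactive elements that genuinely carry state, while the constant positive-definite $(\Theta,\Gamma,\Phi)$ block absorbs the reactive elements whose associated variables are algebraically pinned by the resistive/transformer subnetwork. The signature matrices record which internal port is a capacitor ($+1$) and which is an inductor ($-1$), and $P$ re-labels the reactive elements so that capacitors come first, matching the ordering in \cref{eq:beh1}.

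For \emph{(1)}$\Rightarrow$\emph{(2)}, I would start from an RLCT network and write its behavior in reactance-extracted form. The matrices $X,Y$ in \cref{eq:beh1} have the structure given by \cite[Thm 4]{Hug17}, which already encodes the reciprocity and passivity of the box of resistors, transformers and wires. Combining this with the element laws $C_k \tfrac{d}{dt}v_{\mathrm{int},k}=i_{\mathrm{int},k}$ and $L_k \tfrac{d}{dt}i_{\mathrm{int},k}=v_{\mathrm{int},k}$, I would separate the reactive elements into those whose associated variables can be expressed as linear combinations of the remaining variables (these generate the $(\Theta,\Gamma,\Phi)$ block, with $\Sigma_{\mathrm{int}}^\dagger$ tagging capacitors versus inductors) and those that drive the dynamics (these are folded into $(A,B,C,D)$, with $\Sigma_{\mathrm{int}}$ doing the corresponding tagging). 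The reciprocity and dissipativity of the box pass through this construction to give \emph{(a)} and \emph{(d)}; counting reactive elements of each type in each block yields \emph{(c)}; and tracking the linear change of variables produces $F=\mathcal{F}(\cdot)$ and the permutation $P$.

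The converse direction \emph{(2)}$\Rightarrow$\emph{(1)} is where I expect the main obstacle. The classical synthesis results \cite{Wil72,YT66,Fur83,AV06} guarantee that a controllable and observable state-space model satisfying \emph{(a)} can be realised as an RLCT network, but they do not account for the extra reactive elements hidden in $(\Theta,\Gamma,\Phi)$, nor do they certify that the total element counts equal $n_{\mathrm{L}}$ and $n_{\mathrm{C}}$ exactly. The plan is to first synthesize a subnetwork from $(A,B,C,D,\Sigma_{\mathrm{int}},\Sigma_{\mathrm{ext}})$ using the classical result, and then attach, across its internal terminals, a purely algebraic R/T box synthesized from the positive-definite pencil $(\Theta,\Gamma,\Phi)$ together with the $n_{\mathrm{int}}^\dagger$ additional capacitors and inductors prescribed by $\Sigma_{\mathrm{int}}^\dagger$. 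Conditions \emph{(b)} and \emph{(c)} ensure the element counts line up after reordering by $P$, and the behavioral equivalence of \cite{Hug19} guarantees the resulting full behavior agrees with $\mathcal{B}$ even when the network is uncontrollable, as in \Cref{fig:bottduffin}.

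The principal technical hurdle is the bookkeeping around the split into dynamic and algebraic reactive elements. Establishing $\bm{\Theta&\Gamma^{\mathsf{T}}\\\Gamma&\Phi}\succ 0$ (strictly, not just semidefinitely) requires exploiting the positivity of each individual capacitance and inductance, and verifying the trace identity $\trace{\Sigma_{\mathrm{int}}^\dagger{}}+\trace{\Sigma_{\mathrm{int}}}=n_{\mathrm{C}}-n_{\mathrm{L}}$ demands careful tracking of how many capacitors versus inductors end up in each block under the change of variables. Once this separation is in place, signature symmetry, the dissipativity inequality, and the explicit form of $\mathcal{F}$ all follow by routine algebra.
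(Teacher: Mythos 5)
Your overall strategy---reactance extraction, importing the structure of the resistor/transformer box from Hughes' results, and reversing the construction via network synthesis---is the same as the paper's, and your reading of the roles of the signature matrices and of $P$ is essentially correct. But there are two genuine gaps. First, in the forward direction the step you flag as ``the principal technical hurdle''---splitting the reactive elements into those whose port variables are algebraically pinned and those that carry state, with the associated block of zeros in the coupling matrix---is not something you can extract from \cite[Theorem 4]{Hug17} by bookkeeping: it is precisely the content of \cite[Theorem 5]{Hug17}, which the paper invokes to obtain the hybrid description \cref{eq:p1e1,eq:p1e2} with the structured matrix $M$ (the paper explicitly remarks that the vanishing of certain entries of $M$ ``is not obvious'' and is one of the key steps of that cited proof). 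Without that result your construction of the $\funof{\Theta,\Gamma,\Phi}$ block has no starting point. Even granting it, passing from $M,\Lambda_1,\Lambda_2$ to matrices satisfying \textit{(2)a)} requires a normalisation by $\funof{\Lambda_2+M_{21}\Lambda_1M_{21}^{\mathsf{T}}}^{-\frac{1}{2}}$ together with a proof that $\Sigma_{\mathrm{int}}$ commutes with this square root (the paper uses a polynomial functional-calculus argument); this is more than routine algebra, and it is also what produces the explicit formula for $\mathcal{F}$ in \cref{eq:monster}, which is part of the statement and which your sketch never derives.

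Second, your converse is structured as a two-stage synthesis: realise $\funof{A,B,C,D}$ ``using the classical result'' and then attach a separate R/T box built from $\funof{\Theta,\Gamma,\Phi}$. This does not work as stated. The classical results \cite{Wil72,YT66,Fur83} synthesise a network with a prescribed impedance under controllability and observability hypotheses, i.e.\ they control the external behavior, not the full behavior with a prescribed internal element count and a prescribed map $F$ from $\funof{x,u,y}$ to the internal currents and voltages; and the coupling between the $n_{\mathrm{int}}^{\dagger}$ ``extra'' reactive elements and the state-carrying ones is encoded by $\Gamma$ (equivalently the block $M_{21}$), which lives \emph{inside} the resistor/transformer box, so the two groups of elements cannot be hung off two separately synthesised boxes glued together at their terminals. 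The paper instead reverses the elementary row operations to recover a single coupling matrix $M$ satisfying \cref{eq:p1e14} together with diagonal positive $\Lambda_1,\Lambda_2$, synthesises that one box with resistors and transformers (the step supplied by \cite{AV06}), and connects all $n_{\mathrm{C}}+n_{\mathrm{L}}$ reactive elements to it; the identity between the resulting solution set and $\mathcal{B}$ then holds by construction, with no appeal to controllability needed.
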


The proof of this theorem is given after a short discussion of \textit{(2)a)--d)}, and the definition of $\mathcal{F}\funof{\cdot}$. First observe that condition \textit{(2)a)} is precisely the condition \cref{eq:introst} from the introduction. The role of the permutation matrix in \textit{(2)b)} is simply to reorder the entries in the current and voltage vectors $i,v$ as appropriate. Condition \textit{(2)c)} constrains the sign patterns in the internal signature matrices $\Sigma_{\mathrm{int}}^\dagger{}$ and $\Sigma_{\mathrm{int}}$ to reflect the numbers of inductors and capacitors in the network. More specifically, there is one $-1$ in one of these matrices for each inductor, and one $+1$ for each capacitor. Finally \textit{(2)d)} defines a second condition that is similar in spirit to \cref{eq:introst}, though it involves the third signature matrix $\Sigma_{\mathrm{int}}^\dagger{}$ and the second block matrix. These matrices are in fact only needed to describe the internal currents and voltages, and will not be used in the next subsection. This condition can be viewed as an internal analogue of \textit{(2)a)}. Note that a third signature matrix is required since there may be a discrepancy between the state dimension in the state-space model and the number of inductors and capacitors in the RLCT network (the state dimension $n_{\mathrm{int}}$ may be less than $n_{\mathrm{C}}+n_{\mathrm{L}}$, see \cite{Hug17b} for an extensive discussion of why this can occur). 

The function $\mathcal{F}$ is given as follows:
\begin{equation}\label{eq:monster}
  \begin{aligned}
  \mathcal{F}\funof{\bm{-A&-B\\C&D},\bm{\Theta&\Gamma{}^{\mathsf{T}}\\\Gamma&\Phi},\Sigma,P}{}&=\\
  \tfrac{1}{2}
    \scalemath{0.8}{\sqfunof{
    \begin{array}{c;{2pt/2pt}c}
    P&0\\\hdashline[2pt/2pt]
    0&P
    \end{array}
    }}
    \scalemath{0.8}{\sqfunof{
    \begin{array}{c;{2pt/2pt}c}
    I+\Sigma{}&I-\Sigma{}\\\hdashline[2pt/2pt]
    I-\Sigma{}&I+\Sigma{}
    \end{array}
    }}&
    \scalemath{0.8}{\sqfunof{\begin{array}{ccc}
  -Q_1^{\mathsf{T}}\Phi^{-\frac{1}{2}}\Gamma{}^{\mathsf{T}}\Theta^{-1}&0&0\\
  Q_2^{\mathsf{T}}\Phi^{\frac{1}{2}}\Delta{}\Phi^{-\frac{1}{2}}A&Q_2^{\mathsf{T}}\Phi^{\frac{1}{2}}\Delta{}\Phi^{-\frac{1}{2}}B&0\\
  0&0&I\\\hdashline[2pt/2pt]
  -Q_1^{\mathsf{T}}\Gamma^{\mathsf{T}}\Phi^{-\frac{1}{2}}A&-Q_1^{\mathsf{T}}\Gamma^{\mathsf{T}}\Phi^{-\frac{1}{2}}B&0\\
  \Phi^{-\frac{1}{2}}\Delta\Phi^{-\frac{1}{2}}&0&0\\
  0&I&0
  \end{array}
  }}.
  \end{aligned}
\end{equation}
In the above $\Delta=\funof{\Phi-\Gamma{}\Theta^{-1}\Gamma{}^{\mathsf{T}}}^{\frac{1}{2}}$, and $Q_1,Q_2$ are matrices of (normalised) eigenvectors of $\Theta$ and $\Phi$ respectively, where the columns of $Q_1$ and $Q_2$ are ordered such that the matrices
\begin{equation}\label{eq:annoyingcom}
  \bm{Q_1&0\\0&Q_2}\;\text{and}\;\bm{\Sigma_{\mathrm{int}}^\dagger{}&0\\0&\Sigma_{\mathrm{int}}}
\end{equation}
commute. Note that while in general such an ordering of the eigenvectors may not exist, given matrices satisfying conditions \textit{(2)c)} and \textit{(2)d)} this will always be possible\footnote{\label{foo:1}To see this, observe that if
\[
\bm{I&0\\0&-I}\bm{X&Y^{\mathsf{T}}\\Y&Z}=\bm{X&Y^{\mathsf{T}}\\Y&Z}\bm{I&0\\0&-I},
\]
where $X$, $Y$ and $Z$ are compatibly dimensioned matrices, then $Y=0$. Therefore given matrices satisfying \textit{(2)c)} and \textit{(2)d)} from \Cref{thm:1}, there exist permutation matrices $P_1$ and $P_2$ such that $P_1\Theta{}P_1^{\mathsf{T}}$ and $P_2\Phi{}P_2^{\mathsf{T}}$ are block diagonal. We then see that these matrices admit eigendecompositions $P_1\Theta{}P_1^{\mathsf{T}}=\bar{Q}_1\bar{\Lambda}_1\bar{Q}_1^{\mathsf{T}}$ and $P_2\Phi{}P_2^{\mathsf{T}}=\bar{Q}_2\bar{\Lambda}_2\bar{Q}_2^{\mathsf{T}}$ such that $\bar{Q}_1$ and $\bar{Q}_2$ commute with $P_1\Sigma_{\mathrm{int}}^\dagger{}P_1^{\mathsf{T}}$ and $P_2\Sigma_{\mathrm{int}}P_2^{\mathsf{T}}$ respectively. It then follows that $Q_1=P_1^{\mathsf{T}}\bar{Q}_1P_1$ and $Q_2=P_2^{\mathsf{T}}\bar{Q}_2P_2$ are matrices of normalised eigenvectors of $\Theta$ and $\Phi$ such that the matrices in \cref{eq:annoyingcom} commute as required.}.

\begin{proof}
From the outset, it should be noted that in some sense this result is a specialisation of \cite[Theorem 5]{Hug17} to reciprocal networks, and the method of proof relies heavily on the methods developed there.

\textit{(1)}$\;\Rightarrow{}$\textit{(2)}: In \cite[Theorem 5]{Hug17} it is shown that if $\mathcal{B}$ is the behavior of a passive electrical network in which the signals $\funof{\vect{i},\vect{v}}\in\mathcal{B}$ correspond to the external and internal currents, and the external and internal voltages, respectively, then the underlying differential equations $G\funof{\tfrac{d}{dt}}$ and $H\funof{\tfrac{d}{dt}}$ that describe its behavior are highly structured. More specifically, given any such network, there exists a permutation matrix $P$ such that $\funof{\vect{i},\vect{v}}\in\mathcal{B}$ if and only if
\begin{equation}\label{eq:p1e1}
  \bm{
  P^{\mathsf{T}}\vect{i}\\
  P^{\mathsf{T}}\vect{v}
  }
  =\tfrac{1}{2}
  \bm{
  I+\Sigma{}&I-\Sigma{}\\
  I-\Sigma{}&I+\Sigma
  }
  \bm{
  \vect{e}\\
  \vect{h}
  }
,
\end{equation}
where $\Sigma$ is a signature matrix, and $\vect{e},\vect{h}\in\Lloc{,n}$ solve
\begin{equation}\label{eq:p1e2}
\begin{aligned}
\tfrac{d}{dt}\bm{\vect{e}_{\mathrm{int}}^\dagger{}\\\vect{h}_{\mathrm{int}}}&=
\bm{
\Lambda_1^{-1}&0\\0&\Lambda{}_2^{-1}
}
\bm{\vect{h}_{\mathrm{int}}^\dagger{}\\\vect{e}_{\mathrm{int}}},
\;\vect{e}=\bm{\vect{e}_{\mathrm{int}}^\dagger{}\\\vect{e}_{\mathrm{int}}\\\vect{e}_{\mathrm{ext}}},
\;\vect{h}=\bm{\vect{h}_{\mathrm{int}}^\dagger{}\\\vect{h}_{\mathrm{int}}\\\vect{h}_{\mathrm{ext}}},\\
&\bm{
\vect{e}_{\mathrm{int}}^\dagger{}\\-\vect{e}_{\mathrm{int}}\\\vect{e}_{\mathrm{ext}}
}
=\underbrace{
\bm{
0&-M_{21}^{\mathsf{T}}&0\\
M_{21}&M_{22}&M_{23}\\
0&M_{32}&M_{33}
}}_{\eqqcolon{}M}
\bm{
-\vect{h}_{\mathrm{int}}^\dagger{}\\\vect{h}_{\mathrm{int}}\\\vect{h}_{\mathrm{ext}}
},
\end{aligned}
\end{equation}
where $M\in\R^{n\times{}n}$, and $\Lambda_1\in\R^{n_{\mathrm{int}}^\dagger{}\times{}n_{\mathrm{int}}^\dagger{}},\Lambda_2\in\R^{n_{\mathrm{int}}\times{}n_{\mathrm{int}}}$. This is a lot of notation to take in, but each part of the representation in \cref{eq:p1e1,eq:p1e2} can be interpreted in terms of the underlying electrical network. In particular, \cref{eq:p1e1} is just defining a set of hybrid variables that correspond to the driving point currents and voltages (these are split between $\vect{e}_{\mathrm{ext}},\vect{h}_{\mathrm{ext}}$), and the internal currents and voltages (these are split between $\vect{e}_{\mathrm{int}},\vect{h}_{\mathrm{int}},\vect{e}_{\mathrm{int}}^\dagger{},\vect{h}_{\mathrm{int}}^\dagger{}$). The way the currents and voltages are divided between the hybrid variables is specified by the signature matrix $\Sigma$. The internal currents and voltages are split into two groups (with and without the $\dagger{}$s) to reveal the special internal structure in the matrix $M$. The entries in $M$ are determined by the resistances, the turns ratios in the transformers, and the network topology. By passivity and reciprocity, $M$ satsifies
\begin{equation}\label{eq:p1e14}
M+M^{\mathsf{T}}\succeq{}0,\;\Sigma{}M=M^{\mathsf{T}}\Sigma{},
\end{equation}
where $\Sigma$ is the aforementioned signature matrix (we will clarify the connection between $\Sigma$ and the internal and external signature matrices from the theorem statement below). The fact that certain entries of $M$ may be assumed to be zero for a certain ordering of the currents and voltages is not obvious, and is one of the key steps in the proof of \cite[Theorem 5]{Hug17}). Finally, the differential equation in \cref{eq:p1e2} is defining the element laws for the capacitors and inductors. This means that the matrices $\Lambda_1$ and $\Lambda_2$ are diagonal matrices with positive entries. 

Let us now begin to associate parts of \cref{eq:p1e1,eq:p1e2} with the matrices in \textit{(2)a)--d)}. We have already identified the permutation matrix \textit{(2)b)}. For \textit{(2)c)}, set
\[
\bm{-\Sigma_{\mathrm{int}}^\dagger{}&0&0\\
0&\Sigma_{\mathrm{int}}&0\\
0&0&\Sigma_{\mathrm{ext}}}=\Sigma.
\]
We observe from \cref{eq:p1e2} that each capacitor corresponds to exactly one positive entry in either $\Sigma_{\mathrm{int}}^\dagger{}$ or $\Sigma_{\mathrm{int}}$, and each inductor a negative entry (to see this observe from \cref{eq:p1e1} that if \emph{kk}-th element of $\Sigma$ is positive, the \emph{k}-th entry in \vect{e} is a current and the \emph{k}-th entry in \vect{h} is a voltage, and recall that the equation for a capacitor depends on the derivative of the voltage). For \textit{(2)a)}, set
\begin{equation}\label{eq:p1e20}
\bm{
  -A&-B\\
  C&D
}=
\bm{
  \Omega{}M_{22}\Omega{}&\Omega{}M_{23}\\
  M_{32}\Omega{}&M_{33}}
  ,\;\Omega{}=Q_2\funof{\Lambda{}_2+M_{21}\Lambda_1M_{21}^{\mathsf{T}}}^{-\frac{1}{2}}Q_2^{\mathsf{T}},
\end{equation}
where $Q_2$ is an orthogonal matrix that commutes with $\Sigma_{\mathrm{int}}$. This matrix inherits the required properties from \cref{eq:p1e14}. To see this, for positive semi-definiteness, first observe that
\[
\bm{-A&-B\\C&D}+\bm{-A&-B\\C&D}^{\mathsf{T}}=\bm{\Omega{}&0\\0&I}\funof{\bm{M_{22}&M_{23}\\M_{32}&M_{33}}+\bm{M_{22}&M_{23}\\M_{32}&M_{33}}^{\mathsf{T}}}
\bm{\Omega{}&0\\0&I}\succeq{}0,
\]
where the semi-definiteness follows from \cref{eq:p1e14}. For the symmetry property, note that from \cref{eq:p1e14} we get that $\Sigma_{\mathrm{int}}M_{21}=M_{21}\Sigma_{\mathrm{int}}^{\dagger{}}$, and so
\[
\Sigma_{\mathrm{int}}\funof{\Lambda_2+M_{21}\Lambda{}_1M_{21}^{\mathsf{T}}}=\Lambda_2\Sigma_{\mathrm{int}}+M_{21}\Sigma_{\mathrm{int}}^\dagger{}\Lambda_1M_{21}^{\mathsf{T}}=\funof{\Lambda_2+M_{21}\Lambda{}_1M_{21}^{\mathsf{T}}}\Sigma_{\mathrm{int}}.
\]
Therefore $\Sigma_{\mathrm{int}}$ commutes with $\Omega^{-2}$, and as a result with any $\Omega^{-2n}$ for $n\in\cfunof{0,1,2,\ldots{}}$. Now let $p\funof{x}$ be a polynomial such that for every eigenvalue $\lambda_k$ of $\Omega^{-2}$, $p\funof{\lambda_k}=1/\sqrt{\lambda{}_k}$. Such a polynomial is guaranteed to exist since $\Omega^{-2}\succ{}0$, and can be obtained using a Lagrange interpolating polynomial \cite[\S{9.011}]{JJ99}. We therefore have that $\Sigma_{\mathrm{int}}p\funof{\Omega^{-2}}=p\funof{\Omega^{-2}}\Sigma_{\mathrm{int}}$, and $p\funof{\Omega^{-2}}=\Omega$, meaning that $\Sigma_{\mathrm{int}}$ also commutes with $\Omega$. Therefore
\[
\bm{\Sigma_{\mathrm{int}}&0\\0&\Sigma_{\mathrm{ext}}}\bm{-A&-B\\C&D}=\bm{\Omega{}&0\\0&I}\bm{\Sigma_{\mathrm{int}}&0\\0&\Sigma_{\mathrm{ext}}}\bm{M_{22}&M_{23}\\M_{32}&M_{33}}\bm{\Omega{}&0\\0&I}=\bm{-A&-B\\C&D}^{\mathsf{T}}\bm{\Sigma_{\mathrm{int}}&0\\0&\Sigma_{\mathrm{ext}}}.
\]
For \textit{(2)d)}, set
\[
\bm{\Theta&\Gamma{}^{\mathsf{T}}\\\Gamma{}&\Phi}=\bm{Q_1\Lambda_1{}Q_1^{\mathsf{T}}&\Theta{}E^{\mathsf{T}}\Phi^{\frac{1}{2}}\\
\Phi^{\frac{1}{2}}E\Theta&Q_2\Lambda_2Q_2^{\mathsf{T}}},\;E=\Omega{}Q_2M_{21}Q_1^{\mathsf{T}},
\]
where $Q_1$ is an orthogonal matrix that commutes with $\Sigma_{\mathrm{int}}^\dagger{}$. The required symmetry property again follows from $\Sigma_{\mathrm{int}}M_{21}=M_{21}\Sigma_{\mathrm{int}}^{\dagger{}}$ and that $\Sigma_{\mathrm{int}}$ commutes with $\Omega$. Positive definiteness follows from $\Phi\succ{}0$ and that the Schur complement
\begin{equation}\label{eq:p1e16}
\Phi-\Gamma{}\Theta^{-1}\Gamma{}^{\mathsf{T}}=\Phi^{\frac{1}{2}}\funof{I-E\Theta{}E^{\mathsf{T}}}\Phi^{\frac{1}{2}}=\Phi^{\frac{1}{2}}\Omega{}\Phi{}\Omega\Phi^{\frac{1}{2}}\succ{}0,
\end{equation}
where the final equality in the above follows from
\[
\Omega^{-2}=\Phi+Q_2M_{21}Q_1^{\mathsf{T}}\Theta{}Q_1M_{21}^{\mathsf{T}}Q_2^{\mathsf{T}}\quad\Longleftrightarrow{}\quad{}I=\Omega\Phi{}\Omega{}+E\Theta{}E^{\mathsf{T}}.
\]
All that remains is to show that the behavior described by \cref{eq:p1e1,eq:p1e2} can be written in the state-space form in \textit{(2)}. To this end, let
\begin{equation}\label{eq:p1e5}
\begin{aligned}
\vect{u}=\vect{h}_{\mathrm{ext}},\;\Omega\vect{x}=\vect{h}_{\mathrm{int}},\;\vect{y}=\vect{e}_{\mathrm{ext}},\;\vect{t}=Q_1\vect{h}_{\mathrm{int}}^\dagger{},\\\vect{w}=\Omega{}\vect{e}_{\mathrm{int}},\;\vect{z}=Q_1\vect{e}_{\mathrm{int}}^\dagger{}\;\text{and}\;\Xi=I-E\Theta{}E^{\mathsf{T}}.
\end{aligned}
\end{equation}
Rewriting \cref{eq:p1e2} in terms of these parameters and variables and ordering the variables and equations appropriately gives
\begin{equation}\label{eq:p1e4}
\scalemath{0.8}{\sqfunof{\begin{array}{ccc;{2pt/2pt}ccc}
\Xi{}\tfrac{d}{dt}&0&0&0&-I&0\\
C&D&-I&0&0&0\\\hdashline[2pt/2pt]
0&0&0&I&0&-\Theta\tfrac{d}{dt}\\
-A&-B&0&-E&I&0\\
E^{\mathsf{T}}&0&0&0&0&I
\end{array}}}
\scalemath{0.8}{\sqfunof{\begin{array}{c}
\vect{x}\\
\vect{u}\\
\vect{y}\\\hdashline[2pt/2pt]
\vect{t}\\
\vect{w}\\
\vect{z}
\end{array}}}=0.
\end{equation}
Denoting the \emph{k}-th block of rows as $r_k$, performing the sequence of elementary row operations
\[
r_3\mapsto{}r_3+\Theta{}\tfrac{d}{dt}r_5,\;r_1\mapsto{}r_1+Er_3+r_4,\;r_3\mapsto{}r_3-\Theta{}E^{\mathsf{T}}r_1,\;r_4\mapsto{}r_4+Er_3
\]
shows that \cref{eq:p1e4} is equivalent to
\begin{equation}\label{eq:p1e17}
\scalemath{0.8}{\sqfunof{\begin{array}{ccc;{2pt/2pt}ccc}
\tfrac{d}{dt}I-A&-B&0&0&0&0\\
C&D&-I&0&0&0\\\hdashline[2pt/2pt]
\Theta{}E^{\mathsf{T}}A&\Theta{}E^{\mathsf{T}}B&0&I&0&0\\
-\Xi{}A&-\Xi{}B&0&0&I&0\\
E^{\mathsf{T}}&0&0&0&0&I
\end{array}}}
\scalemath{0.8}{\sqfunof{\begin{array}{c}
\vect{x}\\
\vect{u}\\
\vect{y}\\\hdashline[2pt/2pt]
\vect{t}\\
\vect{w}\\
\vect{z}
\end{array}}}=0.
\end{equation}
Through \cref{eq:p1e5}, this shows that $\funof{\vect{e},\vect{h}}$ solve \cref{eq:p1e2} if and only if
\[
\vect{e}=\bm{-Q_1^{\mathsf{T}}E^{\mathsf{T}}&0&0\\\Omega^{-1}\Xi{}A&\Omega^{-1}\Xi{}B&0\\0&0&I}\bm{\vect{x}\\\vect{u}\\\vect{y}},\;\vect{h}=\bm{-Q_1^{\mathsf{T}}\Theta{}E^{\mathsf{T}}A&-Q_1^{\mathsf{T}}\Theta{}E^{\mathsf{T}}B&0\\\Omega&0&0\\0&I&0}\bm{\vect{x}\\\vect{u}\\\vect{y}},\;\funof{\vect{x},\vect{u},\vect{y}}\in\beh.
\]
Finally we observe that $\Omega,E,Q_1$ and $Q_2$ can all be written in terms of $\Theta,\Gamma$ and $\Phi$. More specifically we see from \cref{eq:p1e16} that
\begin{equation}\label{eq:p1e18}
\Omega=\Phi^{-\frac{1}{2}}\funof{\Phi-\Gamma{}\Theta^{-1}\Gamma^{\mathsf{T}}}^{\frac{1}{2}}\Phi^{-\frac{1}{2}},
\end{equation}
that $E=\Phi^{-\frac{1}{2}}\Gamma{}\Theta^{-1}$, and that $Q_1,Q_2$ are the (normalised) matrices of eigenvectors of $\Theta$ and $\Phi$. We can therefore write the behavior as a function of the matrices in \textit{(2)a)--d)}, and the specific expression as given by the function $\mathcal{F}\funof{\cdot{}}$ in \cref{eq:monster} follows by substitution into \cref{eq:p1e1},

\textit{(2)} $\Rightarrow{}$\textit{(1)}: First observe that given matrices with the properties in \textit{(2)a)--d)} we can reverse the arguments from \cref{eq:p1e17} all the way back to \cref{eq:p1e2}. In particular, by \textit{(2)d)} $\Theta$ and $\Phi$ are symmetric and commute with $\Sigma_{\mathrm{int}}^\dagger{}$ and $\Sigma_{\mathrm{int}}$. Therefore they have eigendecompositions $\Theta=Q_1\Lambda{}Q_1^{\mathsf{T}}$ and $\Phi=Q_2\Lambda{}Q_2^{\mathsf{T}}$ such that $Q_1$ and $Q_2$ are orthogonal and commute with $\Sigma_{\mathrm{int}}^\dagger{}$ and $\Sigma_{\mathrm{int}}$, respectively (to see this second point, see \cref{foo:1}). We then get $E=\Phi^{-\frac{1}{2}}\Gamma{}\Theta^{-1}$ and $\Omega$ from \cref{eq:p1e18}, from which the entries of $M$ soon fall. $M$ inherits the properties in \cref{eq:p1e14} from \textit{(2)a)} and \textit{(2)d)}. 

Therefore it suffices to show that there exists an RLCT network with behavior given by the set of locally integrable solutions to \cref{eq:p1e1,eq:p1e2}, for any positive and diagonal $\Lambda{}_1$ and $\Lambda_2$, and $M$ that satisfies \cref{eq:p1e14}. The required construction is classical in the network synthesis literature, but we outline the relevant steps for completeness.  Following \cite{AV06}, \cref{eq:p1e1,eq:p1e2} can be synthesised by interconnecting a network of resistors and transformers, with a set of inductors and capacitors, with dynamics specified by the algebriac and differential equation in \cref{eq:p1e2} respectively. By choosing the inductances and capacitances appropriately, the differential equation can be synthesised for any diagonal matrices $\Lambda_1$ and $\Lambda_2$ with positive entries. Finally it is also shown in \cite{AV06} that any $M$ that satisfies \cref{eq:p1e14} can be synthesised using resistors and transformers. Therefore for any $\mathcal{B}$ given by \textit{(2)}, there exists an RLCT network with the same behavior as required.
\end{proof}

\subsection{External RLCT Network Behaviors and Minimal Realisations}

Electrical networks as defined in this paper interact with the world at large through the driving points. Therefore for the purposes of control system analysis and design it is often sufficient to only describe the set of external currents and voltages, which we shall refer to as the external network behavior. As we shall soon see, the external behavior of an RLCT network admits a simpler state-space description than that in \Cref{thm:1}. Furthermore for networks containing only subsets of the RLCT elements, things simplify further, and the resulting behaviors always admit observable and controllable state-space realisations.

The external behavior of an RLCT network is the projection of the full behavior (the set of solutions to \cref{eq:beh1}) onto the external currents and voltages:
\[
\cfunof{\funof{i_{\mathrm{ext}},v_{\mathrm{ext}}}\in\Lloc{,n_{\mathrm{ext}}}\times{}\Lloc{,n_{\mathrm{ext}}}:\funof{\bm{i_{\mathrm{int}}\\i_{\mathrm{ext}}},\bm{v_{\mathrm{int}}\\v_{\mathrm{ext}}}}\in\Lloc{,n}\times{}\Lloc{,n}\text{ satisfy \cref{eq:beh1}}}.
\]
A more compact description of the external behavior can be obtained by eliminating the internal currents and voltages $i_{\mathrm{int}}\in\Lloc{,n_{\mathrm{int}}}$ and $v_{\mathrm{int}}\in\Lloc{,n_{\mathrm{int}}}$ from \cref{eq:beh1}. There are enough equations in \cref{eq:beh1} to make this possible, but when eliminating functions in $\Lloc{}$ there are some extra technical considerations. The required technique is called proper elimination \cite{Pol93}. For brevity we omit the details, however, as shown in \cite{Hug17}, internal currents and voltages in RLCT networks (and more generally linear passive networks) are always properly eliminable (this can also be seen from \cref{eq:p1e17} in the proof of \Cref{thm:1}). After performing this step we obtain an equivalent description of the external behavior
\begin{equation}\label{eq:beh3}
\cfunof{\funof{i_{\mathrm{ext}},v_{\mathrm{ext}}}\in\Lloc{,n_{\mathrm{ext}}}\times{}\Lloc{,n_{\mathrm{ext}}}:G_{\mathrm{ext}}\funof{\tfrac{d}{dt}}i_{\mathrm{ext}}=H_{\mathrm{ext}}\funof{\tfrac{d}{dt}}v_{\mathrm{ext}}},
\end{equation}
where $G_{\mathrm{ext}}$ and $H_{\mathrm{ext}}$ are matrices of polynomials. This representation of the external behavior is in exactly the same form as the description of the full behavior in \cref{eq:beh11}, but now only the external currents and voltages remain.

It was shown in \cite{Hug19} that \cref{eq:beh3} gives the external behavior of an RLCT network if and only if \cref{eq:beh3} is equal to the external behavior of a state-space model with the structure in \cref{eq:introst} (though the state-space model is not necessarily observable or controllable, consider again the Bott-Duffin network in \Cref{fig:bottduffin}). The following theorem shows that the networks built out of subsets of the RLCT elements have state-space descriptions with the additional structure in \Cref{tab:1}. Note that as before these algebraic characterisations are equivalent. That is, every network behavior has a state-space realisation with the given algebraic structure, and also every structured state-space realisation can be synthesised with a network of the corresponding type.  

\begin{theorem}\label{thm:2}
Let
\[
\mathcal{B}=\cfunof{\funof{\vect{i}_{\mathrm{ext}},\vect{v}_{\mathrm{ext}}}\in\Lloc{,n_{\mathrm{ext}}}\times{}\Lloc{,n_{\mathrm{ext}}}:G_{\mathrm{ext}}\funof{\tfrac{d}{dt}}\vect{i}_{\mathrm{ext}}=H_{\mathrm{ext}}\funof{\tfrac{d}{dt}}\vect{v}_{\mathrm{ext}}},
\]
where $G_{\mathrm{ext}}$ and $H_{\mathrm{ext}}$ are matrices of polynomials. Given any entry $\bullet$ in \Cref{tab:1}, the following are equivalent:
\begin{enumerate}
  \item $\mathcal{B}$ is the external behavior of an RLCT network constructed using only elements from entry $\bullet$ in \Cref{tab:1}.
  \item
  \[
  \mathcal{B}=\cfunof{\funof{\vect{i}_{\mathrm{ext}},\vect{v}_{\mathrm{ext}}}:\funof{\vect{x},\vect{u},\vect{y}}\in{}\beh,
  \scalemath{0.8}{\sqfunof{
  \begin{array}{c}
  \vect{i}_{\mathrm{ext}}\\\hdashline[2pt/2pt]
  \vect{v}_{\mathrm{ext}}
  \end{array}
  }}
  =
  \scalemath{0.8}{\sqfunof{
  \begin{array}{cc;{2pt/2pt}cc}
  P_1&0&P_2&0\\\hdashline[2pt/2pt]
  0&P_2&0&P_1
  \end{array}
  }}
  \scalemath{0.8}{\sqfunof{
  \begin{array}{c}
  \vect{y}\\\hdashline[2pt/2pt]
  \vect{u}
  \end{array}
  }}},
  \]
  where $A,B,C$ and $D$ are as in entry $\bullet$ in \Cref{tab:1}, $\funof{C,A}$ and $\funof{A,B}$ are observable and controllable, and $\bm{P_1&P_2}\in\R^{n\times{}n}$ is a permutation matrix.
\end{enumerate}
\end{theorem}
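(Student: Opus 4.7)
The plan is to build directly on \Cref{thm:1}, which already provides a signature-symmetric realization of the full RLCT behavior. For each entry in \Cref{tab:1} the proof has two principal tasks: first, to specialize the constraints of \Cref{thm:1} to the particular subset of elements named in that entry, and second, to pass from the realization guaranteed by \Cref{thm:1} (which need not be controllable or observable) to a minimal realization that still carries the specialized structure.

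For the first task, I would exploit that $\Sigma_{\mathrm{int}}$ in \Cref{thm:1} is determined by the reactive elements: each inductor contributes a $-1$ entry and each capacitor a $+1$. Hence $\Sigma_{\mathrm{int}}=-I$ for LT and RLT, $\Sigma_{\mathrm{int}}=+I$ for CT and RCT, $\Sigma_{\mathrm{int}}=\mathrm{diag}\funof{I,-I}$ for LCT, and there is no state at all for T and RT. Writing out condition \textit{(2)a)} of \Cref{thm:1} with these specific $\Sigma_{\mathrm{int}}$, and partitioning $A,B,C,D$ compatibly with $\Sigma_{\mathrm{ext}}=\mathrm{diag}\funof{I,-I}$, immediately reveals the block structure in \Cref{tab:1}. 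For the four lossless entries the absence of resistors forces the PSD inequality in \Cref{thm:1}\textit{(2)a)} to equality, which produces the zero diagonal blocks, the skew off-diagonal $D_{12}$ structure, and the vanishing of the extra blocks of $B$ and $C$ on the resistor-free port types; for the three lossy entries the inequality remains strict on precisely the components associated with resistive losses, which is equivalent to the PSD constraints recorded in the ``Conditions'' column.

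For the second task, I would use that with the input/output assignment induced by $\Sigma_{\mathrm{ext}}$ the external transfer function is proper for each of these subclasses (choosing currents through inductors and voltages across capacitors as the causal variables avoids the improper behavior that is always available to the full RLCT class). By the classical Willems '72 Theorem~7 this guarantees a controllable and observable signature-symmetric realization, and combining this with the structure derived in the first task (which is invariant under signature-preserving similarity transformations) yields the required minimal \Cref{tab:1} realization. The reverse implication \textit{(2)}$\Rightarrow${}\textit{(1)} is nearly immediate: each \Cref{tab:1} entry is a special case of \Cref{thm:1}\textit{(2)a)}, so the behavior corresponds to some RLCT network, and inspection of the classical synthesis construction (along the lines of Anderson--Vongpanitlerd) shows that the more restrictive block structure precludes the use of elements outside the named subset. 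The permutation $\bm{P_1&P_2}$ in the statement merely re-sorts the external currents and voltages so that the $\Sigma_{\mathrm{ext}}$-induced split between inputs and outputs is undone. The main obstacle will be the second task: the Bott--Duffin example shows that for general RLCT networks one cannot guarantee a controllable and observable signature-symmetric realization, so the argument has to leverage the structural features specific to the seven subclasses in \Cref{tab:1}, essentially the properness of their external transfer functions under the $\Sigma_{\mathrm{ext}}$ assignment, in order to bring the classical minimal-realization machinery to bear.
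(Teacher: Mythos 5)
Your handling of the first task is essentially the paper's: specialise the internal signatures according to the element counts, and use losslessness to force the symmetric part of the realisation to vanish (the paper obtains this from a cited structural result on the resistor--transformer box rather than directly from the semi-definiteness condition, but the content is the same). The genuine gap is in the second task: you have misdiagnosed the obstruction that the Bott--Duffin example poses. Properness is not the issue. The network in \Cref{fig:bottduffin} already admits a signature-symmetric realisation in the form of \cref{eq:introst} with a perfectly proper hybrid transfer function---it is written out explicitly in the figure caption---and by \cite{Hug19} every RLCT network admits such a realisation. What fails for Bott--Duffin is \emph{behavioural controllability}: its external behaviour strictly contains autonomous trajectories that no controllable-and-observable state-space model can reproduce, no matter how the port variables are assigned as inputs and outputs. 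Consequently ``the transfer function is proper, hence Willems '72 Theorem 7 applies'' only yields a minimal realisation of the \emph{transfer function}, which in general describes a strictly smaller behaviour than $\mathcal{B}$. As written, your argument would equally ``prove'' the existence of minimal structured realisations for the full RLCT class, where the conclusion is false.

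What is actually needed, and what the paper supplies, is a demonstration that for each entry of \Cref{tab:1} the uncontrollable part of the structured realisation is simultaneously unobservable and dynamically decoupled, so that truncating it leaves the external behaviour unchanged in the sense of the equivalence conditions of \cite[A.3]{Hug18} (equality of transfer functions plus the intertwining relations $CA^kT=C_1A_1^k$ and $CA^k=C_1A_1^kT_1$). The paper does this with elementary state-space algebra: restricting to the range of the controllability matrix via an orthogonal $Q=\bm{Q_1&Q_2}$, the symmetry $A=A^{\mathsf{T}}$ (RCT) or skew-symmetry $A=-A^{\mathsf{T}}$ (LCT) forces the coupling block $\hat{A}_{12}$ to vanish, and $C=-\Sigma_{\mathrm{ext}}B^{\mathsf{T}}$ (respectively $C=B^{\mathsf{T}}$) then forces $CQ_2=0$; the remaining entries follow by duality and specialisation. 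If you prefer to keep your route through classical minimal-realisation theory, you must first establish behavioural controllability of the external behaviour for each subclass (for instance via the fact that a non-controllable RLCT network must contain both an inductor and a capacitor, together with the separate result for LCT networks, c.f.\ \Cref{rem:cont}); this is a nontrivial input that the properness observation does not provide.
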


\begin{proof}
Following the proof of \Cref{thm:1}, we see that $\funof{\vect{i}_{\mathrm{ext}},\vect{v}_{\mathrm{ext}}}\in\mathcal{B}$, where $\mathcal{B}$ is as in \textit{(1)}, if and only if
\begin{equation}\label{eq:p2e1}
\bm{P^{\mathsf{T}}\vect{i}_{\mathrm{ext}}\\P^{\mathsf{T}}\vect{v}_{\mathrm{ext}}}=\tfrac{1}{2}\bm{
  I+\Sigma_{\mathrm{ext}}&I-\Sigma_{\mathrm{ext}}\\I-\Sigma_{\mathrm{ext}}&I+\Sigma_{\mathrm{ext}}
}
\bm{
  \vect{e}_{\mathrm{ext}}\\\vect{h}_{\mathrm{ext}}
},
\end{equation}
where $\funof{\vect{e}_{\mathrm{ext}},\vect{h}_{\mathrm{ext}}}$ satisfy \cref{eq:p1e2} (and all notation is as in \Cref{thm:1}). It is no loss of generality to assume that
\[
\Sigma_{\mathrm{ext}}=\bm{I&0\\0&-I},
\]
and so \cref{eq:p2e1} can be rewritten as
\[
\scalemath{0.8}{\sqfunof{
  \begin{array}{c}
  \vect{i}_{\mathrm{ext}}\\\hdashline[2pt/2pt]
  \vect{v}_{\mathrm{ext}}
  \end{array}
  }}
  =
  \scalemath{0.8}{\sqfunof{
  \begin{array}{cc;{2pt/2pt}cc}
  P_1&0&P_2&0\\\hdashline[2pt/2pt]
  0&P_2&0&P_1
  \end{array}
  }}
  \scalemath{0.8}{\sqfunof{
  \begin{array}{c}
  \vect{e}_{\mathrm{ext}}\\\hdashline[2pt/2pt]
  \vect{h}_{\mathrm{ext}}
  \end{array}
  }},
\]
where $P=\bm{P_1&P_2}$ is a permutation matrix. Following the proof of \Cref{thm:1}, we see from \cref{eq:p1e5,eq:p1e17} that $\funof{\vect{e}_{\mathrm{ext}},\vect{h}_{\mathrm{ext}}}$ satisfy \cref{eq:p1e2} if and only if
\[
\bm{\vect{e}_{\mathrm{ext}}\\\vect{h}_{\mathrm{ext}}}=\bm{\vect{y}\\\vect{u}},\;\funof{\vect{x},\vect{u},\vect{y}}\in\beh,
\]
where $A,B,C,D$ satisfy the conditions in \textit{(2)} of \Cref{thm:1}. We then conclude that \textit{(1)} is equivalent to
\[
\mathcal{B}=\cfunof{\funof{\vect{i},\vect{v}}:\funof{\vect{x},\vect{u},\vect{y}}\in{}\beh,
\scalemath{0.8}{\sqfunof{
  \begin{array}{c}
  \vect{i}_{\mathrm{ext}}\\\hdashline[2pt/2pt]
  \vect{v}_{\mathrm{ext}}
  \end{array}
  }}
  =
  \scalemath{0.8}{\sqfunof{
  \begin{array}{cc;{2pt/2pt}cc}
  P_1&0&P_2&0\\\hdashline[2pt/2pt]
  0&P_2&0&P_1
  \end{array}
  }}
  \scalemath{0.8}{\sqfunof{
  \begin{array}{c}
  \vect{y}\\\hdashline[2pt/2pt]
  \vect{u}
  \end{array}
  }}}.
\]
To obtain the structured descriptions in \Cref{tab:1} note that for the signature matrices in \textit{(2)c)} of \Cref{thm:1}, it is no loss of generality to assume that
\[
\Sigma_{\mathrm{int}}^\dagger{}=\bm{I&0\\0&-I},\;\Sigma_{\mathrm{int}}=\bm{I&0\\0&-I},
\]
and furthermore if the network:
\begin{enumerate}[(i)]
  \item contains no inductors, then $\Sigma_{\mathrm{int}}^\dagger{}=I$, $\Sigma{}_{\mathrm{int}}=I$;
  \item contains no capacitors, then $\Sigma_{\mathrm{int}}^\dagger{}=-I$, $\Sigma{}_{\mathrm{int}}=-I$;
  \item contains no resistors, then
  \[
  \bm{-A&-B\\C&D}=-\bm{-A&-B\\C&D}^{\mathsf{T}}.
  \]
\end{enumerate}
 Note that we didn't explicitly show (iii) in \Cref{thm:1}, however by \cite[Theorem 3]{Hug17} if the network contains no resistors, then $M=-M^{\mathsf{T}}$, and so (iii) holds by considering \cref{eq:p1e20}. The structure in the entries of \Cref{tab:1} then just correspond to enforcing (i)--(iii) as appropriate. For example in the RCT case we see that (i) holds, and so \textit{(2)a)} in \Cref{thm:1} implies that if we partition $B,C$ and $D$ compatibly with $\Sigma_{\mathrm{ext}}$,
 \[
 \scalemath{.8}{\sqfunof{
 \begin{array}{c;{2pt/2pt}cc}
 A&B_1&B_2\\\hdashline[2pt/2pt]
 C_1&D_{11}&D_{12}\\
 C_2&D_{21}&D_{22}
 \end{array}}}
 =\scalemath{.8}{\sqfunof{
 \begin{array}{c;{2pt/2pt}cc}
 A&B_1&B_2\\\hdashline[2pt/2pt]
 -B_1^{\mathsf{T}}&D_{11}&D_{12}\\
 B_2^{\mathsf{T}}&-D_{12}^{\mathsf{T}}&D_{22}
 \end{array}}},\;\quad{}\text{where}\quad{}
 \bm{-A&B_1\\B_1^{\mathsf{T}}&D_{11}}\succeq{}0,\;D_{22}\succeq{}0.
 \]
The structure in the other entries in the table follow in a similar fashion. Therefore all that remains is to show that observable and controllable realisations may be used. As noted in \Cref{rem:cont} this can be deduced from existing results, but we give an alternative proof based on ideas that underpin the Kalman decomposition. From \cite[A.3]{Hug18} it follows that two behaviors with state-space realisations defined by
\[
\bm{A&B\\C&D}\quad{}\text{and}\quad{}\bm{A_1&B_1\\C_1&D_1}
\]
describe the same external behavior if and only if
\begin{enumerate}[(a)]
\item $C\funof{sI-A}^{-1}B+D=C_1\funof{sI-A_1}^{-1}B_1+D_1$;
\item There exist matrices $T,T_1$ such that $CA^kT=C_1A_1^k$ and $CA^k=C_1A_1^kT_1$ for $k=0,1,2,\ldots{}$
\end{enumerate}
We will now show that for each of the types of network it is possible to choose such an $A_1,B_1$ and $C_1$ with the same external behavior (and structural features in \Cref{tab:1}), where in addition $\funof{C_1,A_1}$ is observable and $\funof{A_1,B_1}$ is controllable. It in fact suffices to only consider RCT and LCT networks (RLT networks will be covered by duality with RCT networks, and all the other cases are subclasses of these networks). We start by following the standard approach for finding the controllable subspace of a realisation. Let 
\[
Q=\bm{Q_1&Q_2}
\]
be an orthogonal matrix such that $Q_1$ is a basis for the range space of the controllability matrix
\[
\bm{B&BA&\cdots{}&BA^{n-1}}.
\]
Then (see e.g. \cite[Theorem 3.6]{ZDG96}, noting from the proof that the similarity transformation can be assumed to be orthogonal)
\[
Q^{\mathsf{T}}B=\bm{\hat{B}\\0},\;Q^{\mathsf{T}}AQ=\bm{\hat{A}_{11}&\hat{A}_{12}\\0&\hat{A}_{22}}.
\]
Next note that for RCT networks $A=A^{\mathsf{T}}$, and for LCT networks $A=-A^{\mathsf{T}}$. Hence either
\[
\funof{Q^{\mathsf{T}}AQ}^{\mathsf{T}}=Q^{\mathsf{T}}AQ,\quad{}\text{or}\quad{}\funof{Q^{\mathsf{T}}AQ}^{\mathsf{T}}=-Q^{\mathsf{T}}AQ,
\]
and in either case this implies that $\hat{A}_{12}=0$. We also have in the RCT case that $C=-\Sigma{}_{\mathrm{ext}}B^{\mathsf{T}}$, and the LCT case that $C=B^{\mathsf{T}}$, and so
\[
CQ=-\Sigma{}_{\mathrm{ext}}\bm{\hat{B}^{\mathsf{T}}&0}\quad{}\text{or}\quad{}CQ=\bm{\hat{B}^{\mathsf{T}}&0}.
\]
Therefore we see that if
\[
\bm{A_1&B_1\\C_1&D_1}=\bm{Q_1^{\mathsf{T}}AQ_1&Q_1^{\mathsf{T}}B\\CQ_1&D},
\]
(a) and (b) hold (for (b), set $T=T_1^{\mathsf{T}}=Q_1$). Observe also from the above expressions that in the RCT case, $A_1,B_1$ and $C_1$ preserve the structural properties in \Cref{tab:1}. To see that the same is true in the LCT case (for an appropriate choice of the orthogonal basis) it is simplest to appeal to \cite[Part II, Theorem 8]{Wil72}.
\end{proof}

\begin{remark}\label{rem:cont}
In addition to the structure features in the matrices in \Cref{tab:1}, \Cref{thm:2} also shows that the state-space realisations of the external behaviors are observable and controllable. This will be significant in the next section, since it ensures that the special optimal control problems that we study always admit solutions. However it should be noted that the existence of observable and controllable realisations can be deduced from existing results, as we will now discuss. We have stated \Cref{thm:2} using the classical notions of observability and controllability of state-space models. However these concepts have been generalised to behavioral models (see \cite[Definitions 5.2.2 and 5.3.2]{PW98}). When the behavior in question is given by a state-space model, these notions are the same, which is why the classical definitions are adopted in this paper. In general RLCT networks need not have behaviorally controllable external behaviors, in which case they cannot be described by observable and controllable behavioral state-space models (see again the example in \Cref{fig:bottduffin}). However, as shown in \cite[Appendix D]{Hug17c}, whenever the external behavior of an RLCT network is behaviorally controllable, it admits an observable and controllable realisation. The existence of observable and controllable realisations for LCT networks can then be deduced from \cite[Remark 8]{Hug17c}, and for RLT and RCT networks from \cite[Theorem 9]{Hug18a} (this follows since this theorem demonstrates that if an RLCT network is not behaviorally controllable it must contain at least one inductor and one capacitor). \Cref{thm:2} offers an alternative proof of this fact based on standard state-space techniques.
\end{remark}

\section{Optimal Control of RLCT Networks}

In this section we switch our focus to $H_2$ and $H_{\infty}$ optimal control problems. Our objective is to highlight some special problems which, when the process to be controlled corresponds to one of the networks in \Cref{tab:1}, have analytical solutions. These analytical solutions have some curious properties from the perspective of the control of large-scale systems. In particular the control laws are either decentralised, or have naturally distributed implementations. Of particular interest with respect to structured synthesis is that the control laws are globally optimal (that is no performance is lost by using a decentralised or distributed control law), and that the notion of structure has nothing to do with sparsity. That is, the matrices that define the control law can be completely dense, yet by appealing to the electrical network structure, the control laws can be synthesised (and therefore implemented) in a highly structured fashion. The results we present are far from comprehensive, but hopefully illustrate the potential of this viewpoint for enhancing and extending structured synthesis methods. We illustrate the results using examples centred on large-scale optimisation, consensus, electrical power systems and heating networks.

We study the so called sub-optimal $H_2$ and $H_{\infty}$ control problems. In these problems we are given a process with inputs $w$ and $u$, and outputs $z$ and $y$. The controller measures the output $y$, and selects the inputs $u$. The objective is to design the control law so that either the $H_2$ or $H_\infty{}$ norm from $w$ to $z$ is less that some pre-specified level $\gamma>0$. This is summarised in the following problem:

\begin{problem}\label{prob:1}
Given $\gamma{}>0$ and a state-space model
\begin{equation}\label{eq:1}
\begin{aligned}
\tfrac{d}{dt}x&=Ax+B_1w+B_2u,\,x\funof{0}=0,\\
z&=C_1x+D_{11}w+D_{12}u,\\
y&=C_2x+D_{21}w,
\end{aligned}
\end{equation}
find a stabilising control law
\begin{equation}\label{eq:2}
\begin{aligned}
\tfrac{d}{dt}x_{\mathrm{K}}&=A_{\mathrm{K}}x_{\mathrm{K}}+B_{\mathrm{K}}y,\,x_{\mathrm{K}}\funof{0}=0,\\
u&=-C_{\mathrm{K}}x_{\mathrm{K}}-D_{\mathrm{K}}y,
\end{aligned}
\end{equation}
such that
\[
\norm{T_{zw}}_{\bullet}<\gamma{},
\]
where $\bullet$ denotes either the $H_2$ or $H_\infty{}$ norm, and $T_{zw}$ the transfer function from $w$ to $z$ defined by \cref{eq:1,eq:2}.
\end{problem}

Before focusing on our special problems, we first show that the signature symmetric structure in \cref{eq:introst} immediately leads to simplifications in the solution to \Cref{prob:1}. Of particular interest is that it is of no loss of generality to assume that the optimal control law inherits both the external and internal signature structure of the process. Since from \Cref{thm:1} we see that the internal signature specifies the types of reactive elements that are used in the synthesis of a system, this immediately suggests that extra insights into structured synthesis can be obtained from the electrical perspective. It can also be seen from the proof that the number of Riccati equations (or \glspl{lmi}) required to solve \Cref{prob:1} is halved, leading to a computational saving.

\begin{theorem}\label{thm:3}
Let the matrices in \cref{eq:1} of \Cref{prob:1} satisfy,
\begin{equation}\label{eq:sym1}
\scalemath{.8}{
\sqfunof{\begin{array}{c;{2pt/2pt}cc}
\Sigma_{\mathrm{int}}&0&0\\\hdashline[2pt/2pt]
0&\Sigma{}_{\mathrm{ext}}&0\\
0&0&\Sigma{}_{\mathrm{K}}
\end{array}
}}
\scalemath{.8}{
\sqfunof{\begin{array}{c;{2pt/2pt}cc}
-A&-B_1&-B_2\\\hdashline[2pt/2pt]
C_1&D_{11}&D_{12}\\
C_2&D_{21}&0
\end{array}
}}
=
\scalemath{.8}{
\sqfunof{\begin{array}{c;{2pt/2pt}cc}
-A&-B_1&-B_2\\\hdashline[2pt/2pt]
C_1&D_{11}&D_{12}\\
C_2&D_{21}&0
\end{array}
}}^{\mathsf{T}}
\scalemath{.8}{
\sqfunof{\begin{array}{c;{2pt/2pt}cc}
\Sigma_{\mathrm{int}}&0&0\\\hdashline[2pt/2pt]
0&\Sigma{}_{\mathrm{ext}}&0\\
0&0&\Sigma{}_{\mathrm{K}}
\end{array}
}},
\end{equation}
where $\Sigma_{\mathrm{int}},\Sigma_{\mathrm{ext}}$ and $\Sigma_{\mathrm{K}}$ are signature matrices. Then the following are equivalent:
\begin{enumerate}
  \item There exists a controller in the form of \cref{eq:2} that solves \Cref{prob:1}.
  \item There exists a controller in the form of \cref{eq:2}, where in addition
  \begin{equation}\label{eq:sym2}
  \scalemath{.8}{
  \sqfunof{\begin{array}{c;{2pt/2pt}c}
  \Sigma_{\mathrm{int}}&0\\\hdashline[2pt/2pt]
  0&\Sigma_{\mathrm{K}}
  \end{array}
  }}
  \scalemath{.8}{
  \sqfunof{\begin{array}{c;{2pt/2pt}c}
  -A_{\mathrm{K}}&-B_{\mathrm{K}}\\\hdashline[2pt/2pt]
  C_{\mathrm{K}}&D_{\mathrm{K}}
  \end{array}
  }}=
  \scalemath{.8}{
  \sqfunof{\begin{array}{c;{2pt/2pt}c}
  -A_{\mathrm{K}}&-B_{\mathrm{K}}\\\hdashline[2pt/2pt]
  C_{\mathrm{K}}&D_{\mathrm{K}}
  \end{array}
  }}^{\mathsf{T}}
  \scalemath{.8}{
  \sqfunof{\begin{array}{c;{2pt/2pt}c}
  \Sigma_{\mathrm{int}}&0\\\hdashline[2pt/2pt]
  0&\Sigma_{\mathrm{K}}
  \end{array}
  }},
  \end{equation}
  that solves \Cref{prob:1}.
\end{enumerate}
\end{theorem}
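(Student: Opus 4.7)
The implication \textit{(2)} $\Rightarrow{}$ \textit{(1)} is immediate, so the real work is in \textit{(1)} $\Rightarrow{}$ \textit{(2)}. My plan is to invoke the standard LMI characterisation of the suboptimal $H_2$ and $H_\infty$ synthesis problems (e.g.\ Gahinet--Apkarian for $H_\infty$, and the analogous formulation for $H_2$): \Cref{prob:1} admits a solution if and only if a pair of LMIs in symmetric positive definite matrices $R$ and $S$ is feasible, together with a coupling condition $\bm{R&I\\I&S}\succeq{}0$, and from any feasible pair $\funof{R,S}$ a stabilising controller in the form of \cref{eq:2} can be reconstructed explicitly.

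The key step is to exploit \cref{eq:sym1} to show that the LMI feasibility set is invariant under the involution $\funof{R,S}\mapsto\funof{\Sigma_{\mathrm{int}}S\Sigma_{\mathrm{int}},\Sigma_{\mathrm{int}}R\Sigma_{\mathrm{int}}}$. Reading \cref{eq:sym1} block by block yields the identities $\Sigma_{\mathrm{int}}A=A^{\mathsf{T}}\Sigma_{\mathrm{int}}$, $C_1=-\Sigma_{\mathrm{ext}}B_1^{\mathsf{T}}\Sigma_{\mathrm{int}}$, $C_2=-\Sigma_{\mathrm{K}}B_2^{\mathsf{T}}\Sigma_{\mathrm{int}}$, $\Sigma_{\mathrm{ext}}D_{11}=D_{11}^{\mathsf{T}}\Sigma_{\mathrm{ext}}$, $D_{21}=\Sigma_{\mathrm{K}}D_{12}^{\mathsf{T}}\Sigma_{\mathrm{ext}}$, and consequently $C_1^{\mathsf{T}}C_1=\Sigma_{\mathrm{int}}B_1B_1^{\mathsf{T}}\Sigma_{\mathrm{int}}$ and $C_2^{\mathsf{T}}C_2=\Sigma_{\mathrm{int}}B_2B_2^{\mathsf{T}}\Sigma_{\mathrm{int}}$. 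Setting $R=\Sigma_{\mathrm{int}}S\Sigma_{\mathrm{int}}$, the Lyapunov term of the $R$-LMI transforms as $AR+RA^{\mathsf{T}}=\Sigma_{\mathrm{int}}\funof{A^{\mathsf{T}}S+SA}\Sigma_{\mathrm{int}}$, and a similar computation for the off-diagonal blocks, combined with a congruence by $\mathrm{diag}\funof{\Sigma_{\mathrm{int}},\Sigma_{\mathrm{ext}}}$ and a swap of the two external blocks, matches the $R$-LMI to the $S$-LMI. An analogous check using the $C_2,D_{21}$ identities shows that the null-space bases $\mathcal{N}_R$ and $\mathcal{N}_S$ are related by the same signature change, and the coupling condition is manifestly preserved.

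By convexity of the LMI feasibility set, the average $\funof{\bar R,\bar S}\coloneqq\tfrac12\funof{R+\Sigma_{\mathrm{int}}S\Sigma_{\mathrm{int}},\,S+\Sigma_{\mathrm{int}}R\Sigma_{\mathrm{int}}}$ is then also feasible and, by construction, satisfies $\bar{S}=\Sigma_{\mathrm{int}}\bar{R}\Sigma_{\mathrm{int}}$; hence only one of $\bar{R},\bar{S}$ is really free, which is the promised halving of the computational burden. Feeding $\funof{\bar R,\bar S}$ into the reconstruction formula yields a controller whose state-space matrices inherit \cref{eq:sym2} with internal signature $\Sigma_{\mathrm{int}}$, since the reconstruction expresses each controller matrix as a combination of $\bar R,\bar S$ and plant matrices, and every such combination is signature symmetric by the identities above. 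The main obstacle is the bookkeeping in the invariance step: each appearance of a plant matrix in the LMI must be tracked through signature conjugation and matched with its counterpart in the dual LMI after congruence, and although every individual verification is elementary, the collection is notationally heavy. The $H_2$ case is handled identically using the analogous pair of $H_2$-synthesis LMIs.
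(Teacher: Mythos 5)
Your strategy is essentially the paper's: extract the block identities from \cref{eq:sym1} (yours are all correct), use them to show that the second Riccati equation/LMI is solved by the signature conjugate of the solution to the first, and then reconstruct a structured controller. The paper's primary exposition runs through the two Riccati equations of \cite{DKGF89} under simplifying assumptions and only appeals to the LMI formulations to remove them, whereas you work with the LMIs throughout; your averaging step $\funof{\bar R,\bar S}=\tfrac12\funof{R+\Sigma_{\mathrm{int}}S\Sigma_{\mathrm{int}},\,S+\Sigma_{\mathrm{int}}R\Sigma_{\mathrm{int}}}$ is a clean way to manufacture a feasible pair satisfying $\bar S=\Sigma_{\mathrm{int}}\bar R\Sigma_{\mathrm{int}}$ without relying on uniqueness of stabilising solutions, and it makes the ``halving'' observation transparent.

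The gap is in your final sentence of the reconstruction step, where you assert that the controller built from $\funof{\bar R,\bar S}$ inherits \cref{eq:sym2} because ``every such combination is signature symmetric by the identities above.'' This is not automatic, and it is exactly where the paper has to do real work. The Gahinet--Apkarian reconstruction depends not only on $\funof{\bar R,\bar S}$ and the plant data but also on a choice of full-rank factors $M,N$ with $MN^{\mathsf{T}}=I-\bar R\bar S$, and a generic choice destroys the symmetry of the realisation; equivalently, on the Riccati side, the central $H_\infty$ controller built from $X_{H_\infty}$ and $Y_{H_\infty}=\Sigma_{\mathrm{int}}X_{H_\infty}\Sigma_{\mathrm{int}}$ does \emph{not} satisfy \cref{eq:sym2} as written --- the paper says so explicitly. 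The fix there is a coordinate transformation by a carefully constructed square root $Z_\infty^{\frac12}$ of $Z_\infty=\funof{I-\gamma^{-2}\Sigma_{\mathrm{int}}X_{H_\infty}\Sigma_{\mathrm{int}}X_{H_\infty}}^{-1}$, defined as a polynomial in $\Sigma_{\mathrm{int}}X_{H_\infty}\Sigma_{\mathrm{int}}X_{H_\infty}$ precisely so that $\Sigma_{\mathrm{int}}Z_\infty^{\frac12}=\funof{Z_\infty^{\frac12}}^{\mathsf{T}}\Sigma_{\mathrm{int}}$, followed by an explicit verification (using the Riccati identity) that the transformed $A_{\mathrm{K}}$ is signature symmetric. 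Your proof needs the analogous ingredient: a factorisation of $I-\bar R\bar S$ compatible with the signature involution, together with a check that the resulting $A_{\mathrm{K}},B_{\mathrm{K}},C_{\mathrm{K}}$ satisfy \cref{eq:sym2}. Without that, the argument establishes feasibility of the symmetric LMI pair but not the existence of a symmetric controller, which is the actual content of \textit{(2)}.
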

\begin{proof}
\textit{(2)} $\Rightarrow{}$\textit{(1)}: Immediate.

\textit{(1)} $\Rightarrow{}$\textit{(2)}: In both the $H_2$ and $H_{\infty}$ case, whenever there exists a solution to \Cref{prob:1}, a controller that satisfies \Cref{prob:1} can be obtained from the stabilising solutions of two Riccati equations (or more generally, from the feasible points of two LMIs). We will show the implication by demonstrating that if \cref{eq:sym1} holds, then the solution to the second Riccati equation (respectively feasible point to the second LMI) can be obtained directly from the first. This will be sufficient to guarantee that a controller obtained from these solutions has the structure in \cref{eq:sym2}, and meets the given performance requirement. For simplicity and to give the reader a single point of reference for both the $H_2$ and $H_{\infty{}}$ case, we will proceed under the `DKGF assumptions' (see i)--iv) in \cite[\S{III.A}]{DKGF89}\footnote{Note that \cite{DKGF89}ii) can be dropped, since we are working under the hypothesis that there exists a stabilising controller. Furthermore \cite{DKGF89}iv) can be dropped since by \cref{eq:sym1} it is equivalent to \cite{DKGF89}iii).}). The assumptions are thus:
\begin{enumerate}[{A}.1]
\item $\funof{A,B_1}$ is stabilisable and $\funof{C_1,A}$ is detectable;
\item $D_{12}^{\mathsf{T}}\bm{C_1&D_{12}}=\bm{0&I}$;
\item $D_{11}=0$.
\end{enumerate}
In each case we will point to the relevant literature to show that the claims continue to hold with A.1--3 removed.

The $H_2$ case: By \cite[Theorem 2]{DKGF89}, under A.1--3 the control law that minimises the $H_2$ norm of $T_{zw}$ can be obtained from the unique stabilising solutions (which are guaranteed to exist under A.1--3) to the Riccati equations
\begin{equation}\label{eq:rich2}
\begin{aligned}
A^{\mathsf{T}}X+XA-XB_2B_2^{\mathsf{T}}X+C_1^{\mathsf{T}}C_1&=0,\\
AY+YA^{\mathsf{T}}-YC_2^{\mathsf{T}}C_2Y+B_1B_1^{\mathsf{T}}&=0.
\end{aligned}
\end{equation}
Denote these stabilising solutions as $X=X_{H_2}$ and $Y=Y_{H_2}$. We now observe that under \cref{eq:sym1} $A=\Sigma_{\mathrm{int}}A^{\mathsf{T}}\Sigma{}_{\mathrm{int}}$, $\Sigma_{\mathrm{int}}B_1=-C_1^{\mathsf{T}}\Sigma_{\mathrm{ext}}$ and $\Sigma_{\mathrm{int}}B_2=-C_2^{\mathsf{T}}\Sigma_{\mathrm{K}}$, and so
\[
X_{H_2}=\Sigma_{\mathrm{int}}Y_{H_2}\Sigma_{\mathrm{int}}.
\] (
To see this simply pre and post multiply the first Riccati equation by $\Sigma_{\mathrm{int}}$, make the corresponding substitutions for $X_{H_2},A,B_2$ and $C_1$, and observe that you get the second Riccati equation. Therefore whenever a controller that satisfies \Cref{prob:1} exists, by \cite[Theorem 1]{DKGF89} the controller in the form of \cref{eq:2} with
\begin{equation}\label{eq:conth2}
\scalemath{.8}{
\sqfunof{\begin{array}{c;{2pt/2pt}c}
  -A_{\mathrm{K}}&-B_{\mathrm{K}}\\\hdashline[2pt/2pt]
  C_{\mathrm{K}}&D_{\mathrm{K}}
\end{array}
}}=
\scalemath{.8}{
\sqfunof{\begin{array}{c;{2pt/2pt}c}
-A+B_2B_2^{\mathsf{T}}X_{H_2}+\Sigma_{\mathrm{int}}X_{H_2}B_2B_2^{\mathsf{T}}\Sigma_{\mathrm{int}}&\Sigma_{\mathrm{int}}X_{H_2}B_2\Sigma_{\mathrm{K}}\\\hdashline[2pt/2pt]
B_2^{\mathsf{T}}X_{H_2}&0
\end{array}
}}
\end{equation}
satisfies \Cref{prob:1}. This controller clearly satisfies \cref{eq:sym2}. For the general case, to relax A.3, choose $D_{\mathrm{K}}$ such that $D_{11}=D_{12}D_{\mathrm{K}}D_{21}$ (if this can be done, it can be done consistently with \cref{eq:sym2}, and if it cannot then there exists no controller that solves \Cref{prob:1}). To relax A.1--2, see the treatment of the singular $H_2$ problem in \cite{Sto92}, and observe that in place of the two Riccati equations, we have a pair of LMIs (denoted in that paper by $F\funof{P}\succeq{}0,G\funof{Q}\succeq{}0$). An analogous argument then shows that $P$ is feasible for the first if and only if $Q=\Sigma_{\mathrm{int}}P\Sigma_{\mathrm{int}}$ is feasible for the second. These can be used to construct a controller meeting \cref{eq:sym2} through formulae analogous to the one above (a technical point here is that if a minimising sequence of controllers is required, perturbations to the process that preserve \cref{eq:sym1} will need to be used, c.f. the discussion at the end of \cite[\S{3}]{Sto92}, though this can always be done).

The $H_{\infty}$ case: By \cite[Theorem 3]{DKGF89}, under A.1--3 there exists a stabilising control law that solves \Cref{prob:1} if and only if there exist stabilising solutions $X=X_{H_\infty}$ and $Y=Y_{H_\infty}$ to the Riccati equations
\begin{equation}\label{eq:hinfric}
\begin{aligned}
A^{\mathsf{T}}X+XA-X\funof{B_2B_2^{\mathsf{T}}-\gamma^{-2}B_1B_1^{\mathsf{T}}}X+C_1^{\mathsf{T}}C_1&=0,\\
AY+YA^{\mathsf{T}}-Y\funof{C_2^{\mathsf{T}}C_2-\gamma^{-2}C_1^{\mathsf{T}}C_1}Y+B_1B_1^{\mathsf{T}}&=0,
\end{aligned}
\end{equation}
such that $X_{H_\infty}\succeq{}0$, $Y_{H_\infty}\succeq{}0$ and $\rho\funof{X_{H_\infty{}}Y_{H_\infty{}}}<\gamma{}^2$ (where $\rho\funof{\cdot}$ denotes the spectral radius of a matrix). The same observations as in the $H_2$ case show that $X_{H_\infty{}}=\Sigma_{\mathrm{int}}Y_{H_\infty{}}\Sigma_{\mathrm{int}}$. Furthermore a controller that solves \Cref{prob:1} is given by
\begin{equation}\label{eq:p3e2}
\scalemath{.8}{
\sqfunof{\begin{array}{c;{2pt/2pt}c}
  -A_{\mathrm{K}}&-B_{\mathrm{K}}\\\hdashline[2pt/2pt]
  C_{\mathrm{K}}&D_{\mathrm{K}}
\end{array}
}}=
\scalemath{.8}{
\sqfunof{\begin{array}{c;{2pt/2pt}c}
-A_\infty&Z_\infty{}\Sigma_{\mathrm{int}}X_{H_\infty}B_2\Sigma_{\mathrm{K}}\\\hdashline[2pt/2pt]
B_2^{\mathsf{T}}X_{H_\infty{}}&0
\end{array}
}},
\end{equation}
where
\[
\begin{aligned}
A_\infty{}&=A-\funof{B_2B_2^{\mathsf{T}}-\gamma{}^{-2}B_1B_1^{\mathsf{T}}}X_{H_\infty{}}-Z_\infty{}\Sigma{}_{\mathrm{int}}X_{H_\infty{}}B_2B_2^{\mathsf{T}}\Sigma_{\mathrm{int}}{},\\
Z_{\infty{}}&=\funof{I-\gamma^{-2}\Sigma_{\mathrm{int}}X_{H_\infty{}}\Sigma_{\mathrm{int}}X_{H_\infty{}}}^{-1}.
\end{aligned}
\]
Unlike in the $H_2$ case, this controller does not necessarily satisfy \cref{eq:sym2}. However, as alluded to in \cite[\S{}V.D]{DKGF89}, a coordinate transformation yields a more symmetric controller description, and this description does in fact satisfy \cref{eq:sym2} for processes satisfying \cref{eq:sym1}. To show this, begin by noting that even though $Z_{\infty}$ is not necessarily positive semi-definite, it has a square root that behaves very much like the unique positive semi-definite square root of a positive semi-definite matrix \cite[p.488]{HJ94}. To define this square root, consider the series expansion 
\[
\funof{1-x}^{-\frac{1}{2}}=1+x/2+3x^2/8+\ldots{},
\]
which converges for $\abs{x}<1$. Since $\rho\funof{\gamma^{-2}\Sigma_{\mathrm{int}}X_{H_\infty{}}\Sigma_{\mathrm{int}}X_{H_\infty{}}}<1$ this series will also converge if we set $x=\gamma^{-2}\Sigma_{\mathrm{int}}X_{H_\infty{}}\Sigma_{\mathrm{int}}X_{H_\infty{}}$, and so (by the Cayley Hamilton theorem) there exists a polynomial $p\funof{x}$ such that $Z_{\infty}=p\funof{\Sigma_{\mathrm{int}}X_{H_\infty{}}\Sigma_{\mathrm{int}}X_{H_\infty{}}}^2$. In a slight abuse of notation, we now write 
\[
Z_{\infty}^{\frac{1}{2}}=p\funof{\Sigma_{\mathrm{int}}X_{H_\infty{}}\Sigma_{\mathrm{int}}X_{H_\infty{}}}.
\]
Since $\Sigma_{\mathrm{int}}\funof{\Sigma_{\mathrm{int}}X_{H_\infty{}}\Sigma_{\mathrm{int}}X_{H_\infty{}}}=\funof{\Sigma_{\mathrm{int}}X_{H_\infty{}}\Sigma_{\mathrm{int}}X_{H_\infty{}}}^{\mathsf{T}}\Sigma_{\mathrm{int}}$, we then see that
\begin{equation}\label{eq:sr}
\begin{aligned}
\Sigma_{\mathrm{int}}Z_{\infty}^{\frac{1}{2}}&=\Sigma_{\mathrm{int}}p\funof{\Sigma_{\mathrm{int}}X_{H_\infty{}}\Sigma_{\mathrm{int}}X_{H_\infty{}}}=p\funof{\Sigma_{\mathrm{int}}X_{H_\infty{}}\Sigma_{\mathrm{int}}X_{H_\infty{}}}^{\mathsf{T}}\Sigma_{\mathrm{int}}\\
&=\funof{Z_{\infty}^{\frac{1}{2}}}^{\mathsf{T}}\Sigma_{\mathrm{int}}.
\end{aligned}
\end{equation}
Now perform the coordinate transformation
\[
\bm{A_{\mathrm{K}}&B_{\mathrm{K}}\\C_{\mathrm{K}}&D_{\mathrm{K}}}\mapsto{}\bm{{Z}^{-\frac{1}{2}}_\infty{}&0\\0&I}\bm{A_{\mathrm{K}}&B_{\mathrm{K}}\\C_{\mathrm{K}}&D_{\mathrm{K}}}\bm{{Z}^{\frac{1}{2}}_\infty{}&0\\0&I}.
\]
By inspection of \cref{eq:p3e2}, in these new coordinates \cref{eq:sym2} is satisfied if and only if
\[
\begin{aligned}
\Sigma_{\mathrm{int}}Z_\infty^{-\frac{1}{2}}A_\infty{}Z_\infty^{\frac{1}{2}}=\funof{Z_\infty^{-\frac{1}{2}}A_\infty{}Z_\infty^{\frac{1}{2}}}^{\mathsf{T}}\Sigma_{\mathrm{int}},\;\text{and}\\
\Sigma_{\mathrm{int}}Z_\infty^{\frac{1}{2}}\Sigma_{\mathrm{int}}X_{H_\infty}B_2\Sigma_{\mathrm{K}}=\funof{Z_\infty^{\frac{1}{2}}}^{\mathsf{T}}X_{H_\infty}B_2\Sigma_{\mathrm{K}}.
\end{aligned}
\]
The second of these equations holds by \cref{eq:sr}. For the first, observe (again using \cref{eq:sr}) that
\[
\Sigma_{\mathrm{int}}Z_{\infty}^{-\frac{1}{2}}A_\infty{}Z_{\infty}^{\frac{1}{2}}=\funof{Z_\infty^{\frac{1}{2}}}^{\mathsf{T}}\funof{\Sigma_{\mathrm{int}}Z_{\infty{}}^{-1}A_\infty{}\Sigma_{\mathrm{int}}}\funof{Z_\infty^{\frac{1}{2}}}^{\mathsf{T}}\Sigma_{\mathrm{int}},
\]
and so it is sufficient to show that $\Sigma_{\mathrm{int}}Z_{\infty{}}^{-1}A_\infty{}\Sigma_{\mathrm{int}}=\funof{Z_{\infty{}}^{-1}A_\infty{}}^{\mathsf{T}}$. Multiplying out the expression for $Z_{\infty{}}^{-1}A_{\infty}$ and using the identity
\[
X_{H_\infty{}}A-X_{H_\infty{}}\funof{B_2B_2^{\mathsf{T}}-\gamma{}^{-2}B_1B_1^{\mathsf{T}}}X_{H_\infty{}}=-A^{\mathsf{T}}X_{H_\infty{}}-\Sigma_{\mathrm{int}}B_1B_1^{\mathsf{T}}\Sigma_{\mathrm{int}}
\]
from \cref{eq:hinfric} shows that
\[
\begin{aligned}
Z_{\infty{}}^{-1}A_{\infty}=\gamma^{-2}&\Sigma_{\mathrm{int}}X_{H_\infty{}}\Sigma_{\mathrm{int}}A^{\mathsf{T}}X_{H_\infty{}}+A\\
&-\funof{B_2B_2^{\mathsf{T}}-\gamma{}^{-2}B_1B_1^{\mathsf{T}}}X_{H_\infty{}}-\Sigma_{\mathrm{int}}X_{H_\infty{}}\funof{B_2B_2^{\mathsf{T}}-\gamma{}^{-2}B_1B_1^{\mathsf{T}}}\Sigma_{\mathrm{int}}.
\end{aligned}
\]
The required property is readily checked from the above expression, and hence there exists a controller satisfying \cref{eq:sym2}. Just as in the $H_2$ case, when A.1--3 are relaxed the two Riccati equations are replaced with LMIs, and analogous arguments show that a controller satisfying \cref{eq:sym2} can be obtained. The relevant LMIs can be found in \cite{GA94} (for a suitable controller parametrisation see \cite{Gah92}).
\end{proof}

\subsection{LCT Networks}

We now turn our attention to our first special problem, which concerns LCT networks. LCT networks are extremely common in applications, and we will see connections to distributed optimisation, consensus and electrical power systems in the examples. The problem we consider is the special case of \Cref{prob:1} in which the disturbance $w$ corresponds to a process disturbance and sensor noise (the precise connection between \Cref{prob:1,prob:2} appears later as \cref{eq:p4e1}). The basic setup is illustrated in \Cref{fig:prob2}.

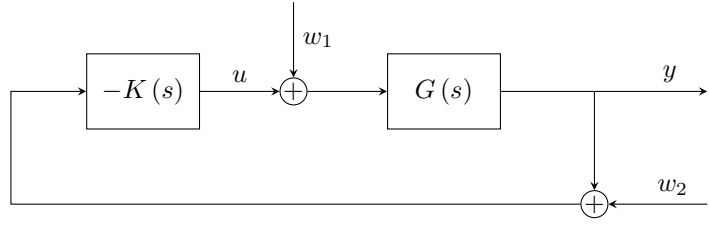
\begin{figure}
\centering
\begin{tikzpicture}[font=\normalsize,>=stealth]

  \node[rectangle, draw, minimum width=1.5cm, minimum height=1cm] (K) at
(0,0) {$-K\left(s\right)$};
  \node[rectangle, draw, minimum width=1.5cm, minimum height=1cm] (G) at
(4,0) {$G\left(s\right)$};
  \node[circle, draw, inner sep=0] (plus1) at (2,0) {$+$};
  \node[circle, draw, inner sep=0] (plus2) at (6,-1.5) {$+$};
  \coordinate (end) at (7.5,0);

  \draw[->] (plus2.west) -| ([xshift=-1cm]K.west) -- (K.west);
  \draw[->] ([yshift=-1.5cm]end) -- node[above, xshift=0.2cm] {$w_2$}
(plus2.east);
  \draw[->] (K.east) -- node[above] {$u$} (plus1.west);
  \draw[->] ([yshift=1cm]plus1.north) -- node[right] {$w_1$} (plus1.north);
  \draw[->] (plus1.east) -- (G.west);
  \draw[->] (G.east) -- ([xshift=-1cm]end) -- node[above] {$y$} (end);
  \draw[->] ([yshift=1.5cm]6,-1.5) -- (plus2.north);

\end{tikzpicture}
\caption{\label{fig:prob2}Block diagram representation of \Cref{prob:2}. This is a standard setup, in which the effects of process and sensor noise ($w_1$ and $w_2$) on the process output and control effort ($y$ and $u$) are to be minimised.}
\end{figure}

\begin{problem}\label{prob:2}
Given $\gamma{}>0$ and a state-space model
\begin{equation}\label{eq:ss1}
\begin{aligned}
\tfrac{d}{dt}x&=Ax+B\funof{u+w_1},\,x\funof{0}=0,\\
y&=Cx+w_2,\\
\end{aligned}
\end{equation}
find a stabilising control law
\begin{equation}\label{eq:ssc2}
\begin{aligned}
\tfrac{d}{dt}x_{\mathrm{K}}&=A_{\mathrm{K}}x_{\mathrm{K}}+B_{\mathrm{K}}y,\,x_{\mathrm{K}}\funof{0}=0,\\
u&=-C_{\mathrm{K}}x_{\mathrm{K}}-D_{\mathrm{K}}y,
\end{aligned}
\end{equation}
such that
\[
\norm{T_{zw}}_{\bullet}<\gamma{},
\]
where $\bullet$ denotes either the $H_2$ or $H_\infty{}$ norm, and $T_{zw}$ the transfer function from
\[
w=\bm{w_1\\w_2}\quad\text{to}\quad{}z=\bm{y\\u}
\]
defined by \cref{eq:ss1,eq:ssc2}.
\end{problem}

As shown in \Cref{tab:1}, LCT networks always admit a realisation where the $A,B$ and $C$ matrices satisfy\footnote{Note we are tacitly assuming the `$D$ matrix' in the process ($G\s$ in \Cref{fig:prob2}) is zero for simplicity, and to ensure that the $H_2$ case of \Cref{prob:2} is solvable for some $\gamma$ (though the statement of \Cref{thm:4} does not require this). Extensions to include direct terms are likely possible, but will not be pursued here.}
\begin{equation}\label{eq:exxxx2}
\mmfulla{A}{B}{C}{0}=
\scalemath{.8}{
    \sqfunof{\begin{array}{cc;{2pt/2pt}cc}
      0 & A_{12} & 0 & B_{12}\\
      -A_{12}^{\mathsf{T}} & 0 & B_{21} & 0\\\hdashline[2pt/2pt]
      0 & B_{21}^{\mathsf{T}} & 0 & 0 \\
      B_{12}^{\mathsf{T}} & 0 & 0 & 0
    \end{array}
    }}.
\end{equation}
The following theorem shows that when the process in \cref{eq:ss1} has dynamics of this form, \Cref{prob:2} has an analytical solution in both the $H_2$ and $H_\infty$ case. As explained in \Cref{ex:1,ex:2} below, both these control laws can be synthesised with an an electrical network that inherits the network structure of the process. This means that if the process is distributed (in the sense of the underlying topology of the LCT network), the controller is too, which coupled with the analytical nature of the solution, makes these control laws highly scalable. Interestingly in the $H_\infty{}$ case the control law is static and completely decentralised. As discussed in \Cref{rem:gof} it also has strong connections to robustness with respect to normalised coprime factors.

\begin{theorem}\label{thm:4}
If $A,B$ and $C$ are as in entry {\bf{LCT}} of \Cref{tab:1}, then the following are equivalent:
\begin{enumerate}
  \item There exists a controller in the form of \cref{eq:ssc2} that solves \Cref{prob:2}.
  \item The controller in the form of \cref{eq:ssc2} with
  \[
  \mmfulla{A_{\mathrm{K}}}{B_{\mathrm{K}}}{C_{\mathrm{K}}}{D_{\mathrm{K}}}=\begin{cases}
  \mmfull{A-2BB^{\mathsf{T}}}{B}{B^{\mathsf{T}}}{0}&\text{in the $H_2$ case;}\\
  \mmfull{}{}{}{\sqrt{2}I}&\text{in the $H_\infty{}$ case;}
  \end{cases}
  \]
  solves \Cref{prob:2}.
\end{enumerate}
\end{theorem}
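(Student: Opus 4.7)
The plan is to cast \Cref{prob:2} in the standard generalised-plant form of \Cref{prob:1} and exploit the LCT structure $A=-A^\mathsf{T}$, $C=B^\mathsf{T}$ to obtain closed-form solutions of the relevant Riccati equations, in the spirit of the proof of \Cref{thm:3}. I set $B_1=\bm{B&0}$, $B_2=B$, $C_1=\bm{C\\0}$, $C_2=C$, $D_{12}=\bm{0\\I}$, $D_{21}=\bm{0&I}$, and $D_{11}=0$ (taking the first block of $z$ to be $Cx$, as is required for the $H_2$ norm to be well defined). The LCT hypothesis then yields the remarkably symmetric identities $B_1B_1^\mathsf{T}=B_2B_2^\mathsf{T}=C_1^\mathsf{T}C_1=C_2^\mathsf{T}C_2=BB^\mathsf{T}$, and by \Cref{thm:2} the pairs $(A,B)$ and $(C,A)$ are controllable and observable, so the DKGF assumptions invoked in the proof of \Cref{thm:3} are satisfied.

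For the $H_2$ case I would test $X=Y=I$ in the two algebraic Riccati equations
\[
A^\mathsf{T}X+XA-XBB^\mathsf{T}X+BB^\mathsf{T}=0,\quad AY+YA^\mathsf{T}-YBB^\mathsf{T}Y+BB^\mathsf{T}=0.
\]
Both collapse, via $A+A^\mathsf{T}=0$, to the tautology $-BB^\mathsf{T}+BB^\mathsf{T}=0$. Stabilisation follows from the Lyapunov identity $(A-BB^\mathsf{T})+(A-BB^\mathsf{T})^\mathsf{T}=-2BB^\mathsf{T}\preceq 0$ combined with controllability via the Hautus test, which rules out purely imaginary closed-loop eigenvalues. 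Uniqueness of the stabilising solution then identifies $X=Y=I$, and substitution into the standard DKGF formula gives $A_\mathrm{K}=A-B_2B_2^\mathsf{T}X-YC_2^\mathsf{T}C_2=A-2BB^\mathsf{T}$, $B_\mathrm{K}=YC_2^\mathsf{T}=B$, $C_\mathrm{K}=B_2^\mathsf{T}X=B^\mathsf{T}$, $D_\mathrm{K}=0$, which is precisely the proposed controller.

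For the $H_\infty$ case I would combine an upper bound with a matching lower bound. For the upper bound, I would apply the bounded real lemma with storage function $P=\sqrt{2}I$ and $\gamma=\sqrt{2}$ to the closed loop under $K=\sqrt{2}I$: a direct computation shows that the resulting LMI reduces to the assertion $\bm{BB^\mathsf{T}&-\sqrt{2}B\\-\sqrt{2}B^\mathsf{T}&2I}\succeq 0$, which follows by Schur complement from $BB^\mathsf{T}-(\sqrt{2}B)(2I)^{-1}(\sqrt{2}B^\mathsf{T})=0$. This certifies $\norm{T_{zw}}_{H_\infty}\leq\sqrt{2}$, and hence $K=\sqrt{2}I$ solves \Cref{prob:2} for every $\gamma>\sqrt{2}$. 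For the matching lower bound, I would invoke the DKGF $H_\infty$ Riccati conditions underpinning \Cref{thm:3}: the control Riccati $-AX+XA-(1-\gamma^{-2})XBB^\mathsf{T}X+BB^\mathsf{T}=0$ admits the stabilising solution $X=\alpha I$ with $\alpha^2=\gamma^2/(\gamma^2-1)$, and dually $Y=\alpha I$; the required spectral radius condition $\rho(XY)<\gamma^2$ then collapses to $\gamma^2>2$. Consequently no controller can achieve $\norm{T_{zw}}_{H_\infty}<\gamma$ for any $\gamma\leq\sqrt{2}$. Combining the two bounds, \textit{(1)} is equivalent to $\gamma>\sqrt{2}$, and $K=\sqrt{2}I$ witnesses \textit{(2)} in exactly this regime.

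The main obstacle is that the DKGF central controller produced by the formula \cref{eq:p3e2} is \emph{not} static as $\gamma\to\sqrt{2}^{+}$---the coupling factor $Z_\infty$ diverges in this limit---so the proposed static controller cannot be read off from the canonical central formula and must be certified independently via the bounded real lemma computation above.
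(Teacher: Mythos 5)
Your proposal is correct and follows essentially the same route as the paper: cast \Cref{prob:2} as the special case \cref{eq:p4e1} of \Cref{prob:1}, use $A=-A^{\mathsf{T}}$, $B=C^{\mathsf{T}}$ to solve the $H_2$ and $H_\infty$ Riccati equations in closed form (giving $X=Y=I$ and $X=Y=\gamma(\gamma^2-1)^{-1/2}I$ respectively, with the spectral-radius condition yielding $\gamma>\sqrt{2}$), and then certify the static $H_\infty$ controller separately via the bounded real lemma because the central controller formula degenerates as $\gamma\to\sqrt{2}$. The only differences are cosmetic: the paper verifies the closed loop with $X=I$ in a strict Riccati inequality for each $\gamma>\sqrt{2}$ rather than with $P=\sqrt{2}I$ non-strictly at $\gamma=\sqrt{2}$, and it derives stabilisability/detectability from hypothesis \textit{(1)} rather than from \Cref{thm:2}.
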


Before proving this result, we will first make some comments and give some examples. The proof can be found at the end of the subsection.

\begin{remark}
The control laws in \Cref{thm:2} are optimal control laws (note that a stabilising control law always exists since by \Cref{thm:2} \funof{C,A} and \funof{A,B} are observable and controllable).
\end{remark}

\begin{remark}\label{rem:gof}
The $H_\infty{}$ case of \Cref{prob:2} is very closely related to the problem of minimising the $H_\infty$ norm of
\begin{equation}\label{eq:gof}
\bm{G\s\\I}\funof{I+K\s{}G\s{}}^{-1}\bm{K\s{}&I},
\end{equation}
where $G\s=C\funof{sI-A}^{-1}B$ and $K\s=C_{\mathrm{K}}\funof{sI-A_{\mathrm{K}}}^{-1}B_{\mathrm{K}}+D_{\mathrm{K}}$. This optimal control problem is special in the sense that all four closed-loop transfer functions are penalised, and is precisely the transfer function that should be minimised to obtain robust stability with respect to perturbations to the normalised coprime factors of $G\s$. It is also the central object in the loop shaping design procedure of McFarlane and Glover \cite{MG92}. It follows directly from the proof of \Cref{thm:3} that if $T_{zw}$ is replaced with \cref{eq:gof}, in the $H_\infty{}$ case \Cref{thm:3} holds with
\[
\mmfulla{A_{\mathrm{K}}}{B_{\mathrm{K}}}{C_{\mathrm{K}}}{D_{\mathrm{K}}}=\mmempty{I}.
\]
In the scalar case this can be understood graphically using the Riemann sphere as explained in \Cref{fig:riem}. 
\end{remark}

\begin{remark}
The equivalence in \Cref{thm:2} actually holds in the more general case that $A=-A^{\mathsf{T}}$ and $B=C^{\mathsf{T}}$. The behavior of such a state-space model need not correspond to that of a reciprocal network, which is why we haven't pursued this generalisation here (though there is in fact an equivalence between the behaviors of this class of state-space model, and the external behaviors of electrical networks constructed using only inductors, capacitors, transformers and gyrators).
\end{remark}

\begin{figure}

\begin{center}
\vspace{2cm}

\begin{tikzpicture}[>=stealth]

  \coordinate (origin) at (0,0);
  \coordinate (leftcircle) at (-2,0);
  \coordinate (rightcircle) at (2,0);
  \coordinate (topcircle) at (0,2);
  \coordinate (bottomcircle) at (0,-2);

  \draw (origin) circle (2cm);
  \draw[name path=ellipsy] (leftcircle) arc (180:360:2 and 0.6);
  \draw[densely dashed] (rightcircle) arc (0:180:2 and 0.6);

  \coordinate (projectedpoint) at ($(origin)+(225:2 and 0.6)$);
  \fill[fill=black] (projectedpoint) circle (0.05);

  \draw[densely dashed] (bottomcircle) -- (topcircle);
  \draw[->] (topcircle) -- +(0,1);

  \draw[name path=negativerealaxis] (bottomcircle) -- ($(bottomcircle)+(197:5)$);
  \draw[->] (bottomcircle) -- ($(bottomcircle)+(17:5)$) node[right] {$\mathrm{Re}$};

  \draw[->] (bottomcircle) -- ($(bottomcircle)+(173:5)$) node[left] {$\mathrm{Im}$};
  \draw[name path=negativeimaginaryaxis] (bottomcircle) -- ($(bottomcircle)+(-7:5)$);

  \draw[name path=northpole, densely dotted] (topcircle) -- ($(projectedpoint)!-2.9cm!(topcircle)$);

  \path [name intersections={of=northpole and negativerealaxis, by=minusone}];
  \fill[fill=black] (minusone) node[below] {-1} circle (0.05);

  \draw[thick, orange!80!black, postaction={on each segment={mid arrow=orange!80!black}}, densely dashed] (bottomcircle) arc (270:90:1.7 and 2);
  \draw[thick, orange!80!black, postaction={on each segment={mid arrow=orange!80!black}}] (topcircle) arc (90:-90:1.7 and 2);
  \draw[thick, orange!80!black, postaction={on each segment={mid arrow=orange!80!black}}] (bottomcircle) -- ($(bottomcircle)+(173:4)$);
  \draw[thick, orange!80!black, postaction={on each segment={mid arrow=orange!80!black}}] ($(bottomcircle)+(-7:4)$) -- (bottomcircle);

  \coordinate (projectedpointred) at (1.68,-0.32);
  \fill[fill=orange!80!black] (projectedpointred) circle (0.05);
  \draw[orange!80!black, name path=northpolered, densely dotted] (topcircle) -- ($(projectedpointred)!-2.6cm!(topcircle)$);
  \path [name intersections={of=northpolered and negativeimaginaryaxis, by=negativeintersect}];
  \fill[fill=orange!80!black] (negativeintersect) circle (0.05);

  \fill[fill=black] (topcircle) node[above, xshift=0.2cm] {N} circle (0.08 and 0.025);

\end{tikzpicture}

\end{center}
\caption[]{\label{fig:riem}Illustration of the observation about the optimal control law from \Cref{rem:gof}. In the scalar case, the $H_\infty$ norm of\newline{}
\begin{minipage}{\linewidth}
     $$
        \bm{G\s\\I}\funof{I+K\s{}G\s{}}^{-1}\bm{K\s{}&I},
     $$
  \end{minipage}
\vspace{.2cm}\newline{}
\noindent{}where $G\s=C\funof{sI-A}^{-1}B$ and $K\s=C_{\mathrm{K}}\funof{sI-A_{\mathrm{K}}}^{-1}B_{\mathrm{K}}+D_{\mathrm{K}}$ can be understood in terms of the Riemann sphere \cite[\S{2.4}]{Vin00}. In particular for any stabilising controller, the size of the $H_\infty{}$ norm of the above transfer function is given by one divided by the minimum chordal distance between the projections of the Nyquist plots of $G\jw$ and $1/K\jw$ onto the Riemann Sphere. Maximising robustness with respect to coprime factor perturbations therefore corresponds to maximising the chordal distances between the projections of $G\jw$ and $-1/K\jw$. The Nyquist plots of networks constructed out of inductors, capacitors and transformers (LCT networks) always lie on the imaginary axis. Therefore their projection onto the Riemann sphere corresponds to the great circle parallel to the imaginary axis, as shown in orange in the figure. The projection of the $-1$ point is at a chordal distance of $1/\sqrt{2}$ from the projection of the Nyquist plot for any LCT network for all frequencies, and in fact corresponds to the optimal control law. As shown in \Cref{thm:3} and \Cref{rem:gof}, the control law $K=I$ is optimal even in the multi-input-multi-output case.}

\vspace{2cm}
\end{figure}
\begin{example}\label{ex:1}
The use of natural phenomena to solve optimisation problems has a rich history. In this example we illustrate this by constructing an electrical circuit, with natural connections to the $H_\infty{}$ case of \Cref{prob:2}, that solves the constrained least squares problem.

The objective of constrained least squares is to find an $\bar{x}\in\R^n$ that satisfies
\begin{equation}\label{eq:constls}
\begin{aligned}
\min_{\bar{x}\in\R^n}\norm{A\bar{x}-b}_2,
\text{s.t.}\,C\bar{x}=d,
\end{aligned}
\end{equation}
where $A\in\R^{m\times{}n}$, $C\in\R^{p\times{}n}$, $b\in\R^m$ and $d\in\R^p$ are the problem data. Constrained least squares encompasses a very broad class of problems including, for example, finite horizon LQR, and includes standard least squares and minimum norm solutions to a set of linear equations as special cases ($p=0$, and $A=I$ and $b=0$, respectively). The solution to \cref{eq:constls} can be obtained from the Karush-Kuhn-Tucker conditions
\begin{equation}\label{eq:kkt}
\bm{-A^{\mathsf{T}}A&-C^{\mathsf{T}}\\C&0}\bm{\bar{x}\\\bar{z}}=\bm{-A^{\mathsf{T}}b\\d},
\end{equation}
where we assume for simplicity that
\begin{enumerate}[i)]
\item $C$ is right invertible;
\item $\bm{A\\C}$ is left invertible;
\end{enumerate}
so that these equations have a unique solution. We will now see that the solution to \cref{eq:kkt} arises naturally from the $H_\infty{}$ solution to \Cref{prob:2}. 

To this end, consider the system
\begin{equation}\label{eq:lslct}
\begin{aligned}
\tfrac{d}{dt}x&=\bm{0&-C^{\mathsf{T}}\\C&0}x+\bm{A^{\mathsf{T}}\\0}\funof{u+w_1-r_1}+\bm{0\\I}r_2,\\
y&=\bm{A&0}x+w_2.
\end{aligned}
\end{equation}
The system matrices take the form of those in the {\bf{LCT}} entry of \Cref{tab:1}, and therefore \cref{eq:lslct} can be synthesised using $p$ inductors, $n$ capacitors and some transformers (note it is not immediate from the table that $r_2$ can be incorporated, but in fact it corresponds to inserting a driving point voltage in series with each of the inductors). When the reference signals $r_1$, $r_2$ are zero, this system is in precisely the form of \cref{eq:ss1} in \Cref{prob:2}, and so \Cref{thm:4} applies. In light of \Cref{rem:gof} the control law
\begin{equation}\label{eq:ctr}
u=-y,
\end{equation}
which can be synthesised by connecting unit resistors across each of the external terminal pairs, maximises robustness with respect to normalised coprime factor perturbations. We also see from \cref{eq:lslct} that the closed loop system becomes
\[
\begin{aligned}
\tfrac{d}{dt}x&=\bm{-A^{\mathsf{T}}A&-C^{\mathsf{T}}\\C&0}x+\bm{A^{\mathsf{T}}\\0}\funof{w_1-r_1}+\bm{0\\I}r_2,\\
y&=\bm{A&0}x+w_2.
\end{aligned}
\]
Therefore by applying the step inputs $r_1=bH\tm$ and $r_2=dH\tm$, where $H\tm$ denotes the unit step, we see that
\[
\lim_{t\rightarrow{}\infty}x\tm=\bm{\bar{x}\\\bar{z}}.
\]
This means that for any $b$ and $d$, the solution to the constrained least squares problem can be obtained by measuring the voltage across the capacitors. Of course it is not necessary to actually use an electrical circuit to implement \cref{eq:lslct,eq:ctr}. These equations could equally well be interpreted as a simple algorithm which can be easily distributed in the case of sparse $A$ and $C$, and \Cref{thm:4} then shows that this algorithm has excellent robustness properties.

It is also interesting to note that the standard consensus algorithm \cite{RBA05} also arises as a special case of the above, by setting $p=0$ and $A$ to be an incidence matrix. The corresponding circuit can in fact be realised without transformers, and \Cref{thm:4} again demonstrates that this simple algorithm has excellent robustness properties\footnote{A small technical point here is that (ii) no longer holds, and the realisation in \cref{eq:lslct} is neither controllable nor observable. However the external behavior of the circuit is guaranteed to be observable and controllable by \Cref{thm:2}, and so the control laws from \Cref{thm:4} are still optimal.}.
\end{example}

\begin{figure}
\centering
\ctikzset{bipoles/length=1cm}
\ctikzset{bipoles/thickness=1}

\subfloat[Electrical analogue of swing equation model.]{
\begin{tikzpicture}[scale=.6,font=\normalsize,>=stealth]

  \coordinate (origin) at (0,0);
  \coordinate (a1) at (2.5,0.5);
  \coordinate (a2) at (-2.5,0.5);
  \coordinate (a3) at (-0.5,-2.5);
  \coordinate (a3belowshifted) at (-1.5,-4);
  \coordinate (a3below) at (-0.5,-4);
  \coordinate (a2below) at (-2.5,-1.5);
  \coordinate (a1below) at (2.5,-1.5);

  \node[above] at (a1) {$1$};
  \node[above] at (a2) {$2$};
  \node[above, left] at (a3) {$3$};

  \fill[black] (origin) circle (0.1);
  \fill[black] (a1) circle (0.1);
  \fill[black] (a2) circle (0.1);
  \fill[black] (a3) circle (0.1);

  \draw (origin) to [L] (a1);
  \draw (a2) to [L] (origin);

  \draw[<-] ([xshift=0.15cm]a1) -- +(1,0) node[right, xshift=-0.1cm,
yshift=0.1cm] {$p_{\mathrm{G},1}$};
  \draw[->] ([yshift=0.2cm]a1below) -- node[right] {$\omega_1$}
([yshift=-0.3cm]a1);
  \draw (a1below) node[ground]{} to +(-1,0) to +(1,0);

  \draw[<-] ([xshift=-0.15cm]a2) -- +(-1,0) node[left, xshift=0.15cm,
yshift=0.1cm] {$p_{\mathrm{G},2}$};
  \draw[->] ([yshift=0.2cm]a2below) -- node[left] {$\omega_2$}
([yshift=-0.3cm]a2);
  \draw (a2below) node[ground]{} to +(-1,0) to +(1,0);

  \draw (a3) to [L] (origin);
  \draw (a3below) node[ground]{} to +(-1,0) to +(1,0);
  \draw (a3) to [C] (a3below);
  \draw[->] ([yshift=0.15cm]a3belowshifted) to node[left] {$\omega_3$}
+(0,1.4);

\end{tikzpicture}
}

\vspace{.5cm}
\subfloat[Electrical analogue of the optimal controller.]{
\begin{tikzpicture}[font=\normalsize,scale=.6,>=stealth]

  \coordinate (origin) at (0,0);
  \coordinate (a1) at (2.5,0.5);
  \coordinate (a2) at (-2.5,0.5);
  \coordinate (a3) at (-0.5,-2.5);
  \coordinate (a3belowshifted) at (-1.5,-4);
  \coordinate (a3below) at (-0.5,-4);
  \coordinate (a2below) at (-2.5,-1.5);
  \coordinate (a1below) at (2.5,-1.5);

  \node[above] at (a1) {$1$};
  \node[above] at (a2) {$2$};
  \node[above, left] at (a3) {$3$};

  \fill[black] (origin) circle (0.1);
  \fill[black] (a1) circle (0.1);
  \fill[black] (a2) circle (0.1);
  \fill[black] (a3) circle (0.1);

  \draw (origin) to [L] (a1);
  \draw (a2) to [L] (origin);

  \draw[->] ([yshift=-0.25cm]a1) -- ([yshift=-0.15cm]a1);
  \draw ([yshift=-0.15cm]a1) to node[yshift=-0.05cm,right] {$u_1$}
+(0,-1.45);

  \fill (a1) circle (0.1);
  \fill (a1below) circle (0.1);
  \draw ([yshift=0.2cm]a1below) -- ([yshift=-0.3cm]a1);

  \draw ([xshift=2cm]a1below) node[right, xshift=.1cm,yshift=0.625cm] {$2\,\Omega$}
to [R] ([xshift=2cm]a1);
  \draw ([xshift=4cm]a1below) node[right, yshift=0.6cm, xshift=0.25cm]
{$y_1$} to [I] ([xshift=4cm]a1);
  \draw ([xshift=4cm]a1below) node[ground]{} -- (a1below);
  \draw ([xshift=4cm]a1) -- (a1);

  \draw[->] ([yshift=-0.25cm]a2) -- ([yshift=-0.15cm]a2);
  \draw ([yshift=-0.15cm]a2) to node[yshift=-0.05cm,right] {$u_2$}
+(0,-1.45);

  \fill (a2) circle (0.1);
  \fill (a2below) circle (0.1);
  \draw ([yshift=0.2cm]a2below) -- ([yshift=-0.3cm]a2);

  \draw ([xshift=-2cm]a2below) node[left, xshift=-.1cm,yshift=0.625cm] {$2\,\Omega$}
to [R] ([xshift=-2cm]a2);
  \draw ([xshift=-4cm]a2below) node[left, yshift=0.6cm, xshift=-0.25cm]
{$y_2$} to [I] ([xshift=-4cm]a2);
  \draw ([xshift=-4cm]a2below) node[ground]{} -- (a2below);
  \draw ([xshift=-4cm]a2) -- (a2);

  \draw (a3) to [L] (origin);
  \draw (a3) to [C] (a3below) to +(1,0) to +(1,-0.2) node[ground]{};
  \draw (a3below) to (a3belowshifted);
+(0,1.4);

\end{tikzpicture}
}
 \caption{\label{fig:ex2} Optimal controller synthesis for a simple power system model. {\footnotesize \textsc{({\tiny A})}} illustrates the construction of the electrical analogue of a swing equation model for a small electrical power system. The same process applies to systems of any size. In this analogue, currents correspond to power flows, and nodal voltages to electrical frequencies (the rate of change of electrical angle). The generator buses (the labeled nodes) give the locations where either renewable generators or conventional synchronous machines are connected. The inductors that connect the nodes correspond to the transmission lines. The renewable generators (nodes 1--2) inject currents into the network corresponding to the deviations in power production from the renewable source. Conventional generators are modelled as capacitors, where the capacitance reflects the inertia in the machine. {\footnotesize \textsc{({\tiny B})}} illustrates an electrical realisation of the optimal control law for this network, as derived in \Cref{ex:2}. The current generated by the \emph{k}-th current source in the controller is set to equal the power imbalance along the transmission lines connected at the \emph{k}-th renewable generator bus (i.e. the power that must be injected by the generator to enforce conservation of power with the given set of flows along the lines). Each renewable generator in the network then sets its electrical frequency to equal the corresponding voltage $u_k$, as determined by this electrical network. The control law inherits the structure of the electrical network (it has the same topology, except for the addition of a set of $2\,\Omega$ resistors), giving it a simple and highly scalable distributed implementation.}
\end{figure}
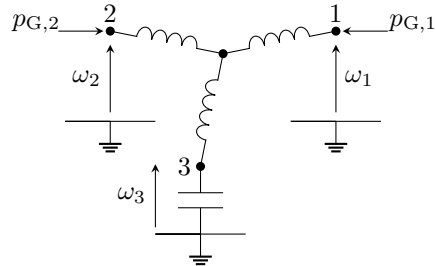
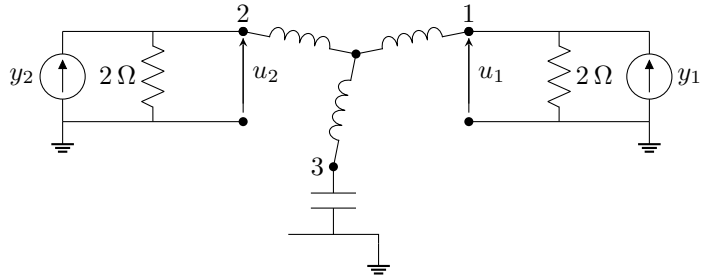

\begin{example}\label{ex:2}
Electrical power systems are, by design, lightly damped electro-mechanical systems. As such, their dynamics can be well modelled by {\bf{LCT}} networks. \Cref{thm:4} therefore applies, and we will show below that the $H_2$ optimal controller from \Cref{thm:4} has a natural distributed implementation in this context. This controller could be used as a starting point to design systems to combat stochastic disturbances arising from, for example, the increased use of renewable energy sources in electrical grids.

Mechanical and equivalent circuit analogues have long been a mainstay of power system modelling and design. Indeed many power system analysis tools, including large-scale methods for analysing inter-area oscillations (see for example \cite{BBS82}, or \cite[Chapter 12]{Kun94} for an introduction) are based on the validity of modelling power systems with lightly damped mechanical networks. This connection can be clearly seen in the swing equation power system model, which itself has an elegant mechanical analogue \cite{DCB13}, though in fact the lossless approximation remains valid even in high fidelity power system models (see for example the models in \cite{Kun94}, especially the equivalent circuit descriptions of synchronous machines in Chapter 5 and the transmission line descriptions in Chapter 6). In the swing equation power system model, the power system is modelled as a set of $n$ generator buses, interconnected through the transmission network. For simplicity we suppose that the generator buses are either renewable generators (the first $q$ buses) or synchronous machines, as modelled by
\begin{equation}\label{eq:gens}
\begin{cases}
p_{\mathrm{G},k}-p_{\mathrm{N},k}=0&\text{if $k\in\cfunof{1,\ldots,q}$,}\\
M_k\tfrac{d}{dt}\omega_k=-p_{\mathrm{N},k}&\text{otherwise.}
\end{cases}
\end{equation}
In the above $M_k>0$ is an inertia parameter reflecting the inertia of the given synchronous machine. Note that it is common to also include damping terms in these equations. However the damping is typically very small (indeed standard power system design requirements only call for damping ratios of 3--5\%), and so we assume these to be zero. The variable $\omega_k$ corresponds to the electrical frequency (the rate of change of electrical angle) at the \emph{k}-th bus, and $p_{\mathrm{G},k}$ $p_{\mathrm{N},k}$ denote the deviations in the power generated at the \emph{k}-th bus, and the power injected into the \emph{k}-th bus through the transmission network, respectively.

The dynamics of the generation buses are coupled through the dynamics of the transmission network. After linearisation, the transmission network dynamics are given by
\begin{equation}\label{eq:trans}
\begin{aligned}
\tfrac{d}{dt}\bm{p_{\mathrm{N},1}\\\vdots{}\\p_{\mathrm{N},q}}&=Y\bm{\omega_1\\\vdots{}\\\omega_q},
\end{aligned}
\end{equation}
where $Y$ is a Kron reduced weighted Laplacian matrix. That is
\[
Y=L_{11}-L_{12}L_{22}^{-1}L_{21},
\]
where
\[
L=\bm{L_{11}&L_{12}\\L_{21}&L_{22}}
\]
is a weighted Laplacian matrix. $L$ is sparse, with a sparsity pattern that reflects the topology of the grid (see for example \cite[Chapter 6]{Kun94}). 

There is a very natural electrical analogue associated with \cref{eq:gens,eq:trans}, as illustrated in \Cref{fig:ex2}. In this setting, currents correspond to power flows, and nodal voltages electrical frequencies. Each of the renewable generator equations in \cref{eq:gens} is associated with a driving point with driving point current $p_{\mathrm{G},k}$, and each synchronous machine equation with a grounded capacitor, with capacitance $M_k$. The transmission network is analogous to a network of inductors. The topology of this network mirrors that of the actual electrical power system, and the values of the inductances can be obtained from the weights in the weighted Laplacian matrix $L$. Therefore by \Cref{thm:2} this network is guaranteed to admit a controllable and observable state-space realisation with the structure in the {\bf{LCT}} entry of \Cref{tab:1}. In fact it is not hard to additionally show that the model takes the form
\[
\begin{aligned}
\tfrac{d}{dt}x&=\bm{0&A_{12}\\-A_{12}^{\mathsf{T}}&0}x+\bm{B_1\\0}\bm{\omega_1\\\vdots{}\\\omega_q},
\\\bm{p_{\mathrm{G},1}\\\vdots{}\\p_{\mathrm{G},q}}&=\bm{B_1^{\mathsf{T}}&0}x.
\end{aligned}
\]
This suggests that if it is possible to emulate any desired power-frequency relation between $p_{\mathrm{G}}$ and $\omega$ using our renewable generation sources, in order to minimise the effects of stochastic fluctuations in the power generated from the renewable sources (this is akin to the disturbance $w_2$ that acts on the output in \Cref{prob:2}), we should design this relationship according to $H_2$ control law in \Cref{thm:4}. An interesting point here, and the one that we want to emphasise with this example, is that although the matrices in the state-space realisation of the resulting control law are typically dense (this is generally the case since the matrix $Y$ is often dense due to $L_{22}^{-1}$ term), the control law can be implemented\footnote{At least from the perspective of calculating the relevant signals. There are of course likely a range of pratical challenges associated with implementing such a control law. However modern power electronics, in particular grid forming inverters \cite{PPG07}, offer tremendous flexibility in the control of the power and electrical frequency at the terminals of a renewable generation source. Nevertheless, not least because the power system model considered here is highly simplified, such a control law should only ever be used to provide insights, perhaps in terms of achievable performance or sensible control architectures, as part of more detailed design process.} in a distributed and highly scalable manner. To see this, observe that the dynamics of the control law from \Cref{thm:3} can be obtained from \cref{eq:exxxx2} (the dynamics of the process being controlled) by implementing the feedback $u=-2y$. This means that we can synthesise a system that will replicate the dynamics of the control law by building a copy of the electrical analogue of the power system, and placing a two ohm resistor over each pair of terminals where a renewable source is connected. This is in effect a distributed implementation of the optimal control law which inherits the sparsity structure of the electrical grid directly, and can be updated in a systematic and transparent manner as power system components are added and removed. This is illustrated in \Cref{fig:ex2}.
\end{example}

We now give the proof of \Cref{thm:3}.

\begin{proof}
\textit{(2)} $\Rightarrow{}$\textit{(1)}: Immediate.

\textit{(1)} $\Rightarrow{}$\textit{(2)}: \Cref{prob:2} is the following special case of \Cref{prob:1}:
\begin{equation}\label{eq:p4e1}
\begin{aligned}
\tfrac{d}{dt}x&=Ax+\bm{B&0}w+Bu,\,x\funof{0}=0,\\
z&=\bm{C\\0}x+\bm{0\\I}u,\\
y&=Cx+\bm{0&I}w.
\end{aligned}
\end{equation}
Under the hypothesis of \textit{(1)}, \funof{C,A} is detectable and \funof{A,B} is stabilisable. Therefore A.1--3 from the proof of \Cref{thm:3} hold, and a controller that solves \Cref{prob:2} can be obtained from the stabilising solution to a single Riccati equation in the $H_2$ case. A little more work is required in the $H_\infty$ case to obtain an optimal controller. This is required since \textit{(2)} promises a single controller for all $\gamma$ such that a solution exists, whereas the central controller in \cref{eq:p3e2} depends on $\gamma$, but again the Riccati equation provides all necessary insight.

$H_2$ case: Since $A,B$ and $C$ are as in entry {\bf{LCT}} of \Cref{tab:1}, $A=-A^{\mathsf{T}}$ and $B=C^{\mathsf{T}}$. By comparison with \cref{eq:rich2}, we see that the Riccati equation that must be solved in this case is
\[
XA+A^{\mathsf{T}}X-XBB^{\mathsf{T}}X+BB^{\mathsf{T}}=0.
\]
By inspection this equation has stabilising solution $X=I$, from which the expression for the controller follows by \cref{eq:conth2} (we can be sure that this is the stabilising solution since it is positive semi-definite, c.f. \cite[Corollary 13.8]{ZDG96}).

$H_\infty{}$ case: By comparison with \cref{eq:hinfric}, for this case we see that we require the stabilising solution to
\[
XA+A^{\mathsf{T}}X-\funof{1-\gamma^{-2}}XBB^{\mathsf{T}}X+BB^{\mathsf{T}}=0,
\]
and that the solution be positive semi-definite. This time we get the solution $X=X_{H_\infty}$ where
\begin{equation}\label{eq:p4e11}
X_{H_\infty}=\frac{\gamma}{\sqrt{\gamma^2-1}}I,\,\gamma>1.
\end{equation}
Again we can be certain that the above is the stabilising solution for $\gamma>1$ by \cite[Corollary 13.8]{ZDG96}, and note in particular that this implies that
\[
A-\sqrt{2}BB^{\mathsf{T}}
\]
is stable. Unlike the $H_2$ case, we must additionally satisfy the constraint
\[
\rho\funof{X_{H_\infty}\Sigma_{\mathrm{int}}X_{H_\infty}\Sigma_{\mathrm{int}}}<\gamma^2.
\]
When combined with \cref{eq:p4e11} this implies that the sub-optimal $H_{\infty}$ control problem is solvable only if $\gamma>\sqrt{2}$. Now consider the control law $u=-\sqrt{2}y$ from the theorem statement. We see from \cref{eq:p4e1} that the resulting closed loop system has realisation
\[
\mmfulla{A}{B}{C}{D}
=\scalemath{.8}{\sqfunof{
  \begin{array}{c;{2pt/2pt}cc}
  A-\sqrt{2}BB^{\mathsf{T}}&B&-\sqrt{2}B\\\hdashline[2pt/2pt]
  B^{\mathsf{T}}&0&0\\-\sqrt{2}B^{\mathsf{T}}&0&-\sqrt{2}I
  \end{array}
}}.
\]
It then follows from the bounded real lemma that this system has $H_\infty{}$ norm less than $\gamma$ if and only if there exists an $X\succ{}0$ such that
\[
XA-\tfrac{1}{\gamma\funof{\gamma^2-2}}\funof{\tfrac{1}{2}\funof{3\gamma^2-2}XBB^{\mathsf{T}}X+\sqrt{2}\gamma{}^3XBB^{\mathsf{T}}+\tfrac{1}{2}\funof{3\gamma^2-2}I}+\funof{\star}^{\mathsf{T}}\prec{}0.
\]
Setting $X=I$ verifies this LMI for all $\gamma>\sqrt{2}$, which completes the proof.
\end{proof}

\subsection{RLT/RCT Networks}

We now consider the special structure in the RLT and RCT networks. RLT and RCT networks are also common in applications, for example in the control of heating networks or viscoelastic systems. A lot more could be said about optimal control problems associated with these systems. This is because these systems have a realisation with an $A$ matrix that is symmetric and negative semi-definite, which endows the corresponding problems with a number of attractive properties. This has been exploited by several authors, see for example \cite{TG01,PS18}. In this subsection we content ourselves with the special case of \Cref{prob:1} in which the disturbance acts on the system state.

\begin{problem}\label{prob:3}
Given $\gamma{}>0$ and a state-space model
\begin{equation}\label{eq:ss3}
\begin{aligned}
\tfrac{d}{dt}x&=Ax+Bu+w,\,x\funof{0}=0,\\
y&=Cx+Du,\\
\end{aligned}
\end{equation}
find a stabilising control law
\begin{equation}\label{eq:ssc3}
\begin{aligned}
\tfrac{d}{dt}x_{\mathrm{K}}&=A_{\mathrm{K}}x_{\mathrm{K}}+B_{\mathrm{K}}y,\,x_{\mathrm{K}}\funof{0}=0,\\
u&=-C_{\mathrm{K}}x_{\mathrm{K}}-D_{\mathrm{K}}y,
\end{aligned}
\end{equation}
such that
\[
\norm{T_{zw}}_{H_\infty{}}<\gamma{},
\]
where $T_{zw}$ is the transfer function from $w$ to $z=\bm{y\\u}$ defined by \cref{eq:ss3,eq:ssc3}.
\end{problem}
Recall from \Cref{tab:1} that RLT networks have realisations with the special structure
\[
\mmfulla{A}{B}{C}{D}=\mmfulla{A}{\begin{array}{cc}B_1 & B_2\end{array}}{\begin{array}{c}B_1^{\mathsf{T}}\\-B_2^{\mathsf{T}}\end{array}}{\begin{array}{cc} D_{11} & D_{12} \\ -D_{12}^{\mathsf{T}} & D_{22}\end{array}},\;\zz{-A & B_2 \\ B_2^{\mathsf{T}} & D_{22}} \succeq 0, D_{11} \succeq{}0.
\]
The following theorem shows that if the process dynamics in \cref{eq:ss3} have this structure (under a slight strengthening), then \Cref{prob:3} admits an analytical solution. Again this control law may be dense, but can be synthesised in a simple and distributed manner as illustrated in \Cref{ex:3} below. A similar result holds for RCT networks, as explained in \Cref{rem:lct}, which follows the theorem statement.

\begin{theorem}\label{thm:5}
If $A,B,C$ and $D$ are as in entry {\bf{RLT}} of \Cref{tab:1}, where in addition
\begin{equation}\label{eq:extr}
\bm{
-A&B_2\\B_2^{\mathsf{T}}&D_{22}
}\succ{}0,
\end{equation}
then the following are equivalent:
\begin{enumerate}
  \item There exists a controller in the form of \cref{eq:ssc3} that solves \Cref{prob:3}.
  \item The controller in the form of \cref{eq:ssc3} with
  \[
  \mmfulla{A_{\mathrm{K}}}{B_{\mathrm{K}}}{C_{\mathrm{K}}}{D_{\mathrm{K}}}=
  \mmfulla{}{}{}{
  \begin{bmatrix}
  D_{11}&D_{12}\\-D_{12}^{\mathsf{T}}&D_{22}
  \end{bmatrix}
  -\begin{bmatrix}
  B_1^{\mathsf{T}}\\B_2^{\mathsf{T}}
  \end{bmatrix}A^{-1}
  \begin{bmatrix}B_1&-B_2
  \end{bmatrix}
  }
  \]
  solves \Cref{prob:3}.
\end{enumerate}
\end{theorem}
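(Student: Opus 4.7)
The implication \textit{(2)}$\Rightarrow{}$\textit{(1)} is immediate. For the converse, the strategy is to verify directly that the proposed static controller achieves the infimum of $\norm{T_{zw}}_{H_\infty{}}$ over all stabilising controllers. The hypothesis \cref{eq:extr} yields (by Schur complement) both $-A\succ{}0$ and $D_{22}+B_2^{\mathsf{T}}A^{-1}B_2\succ{}0$; the former makes $A$ Hurwitz and furnishes the positive definite Lyapunov candidate $X=-A^{-1}$. Let $J=\bm{I&0\\0&-I}$ be the signature matrix partitioned compatibly with $y$ and $u$. The \textbf{RLT} entry of \Cref{tab:1} yields the reciprocity identities $C=JB^{\mathsf{T}}$ and $JDJ=D^{\mathsf{T}}$, and the proposed controller compactly rewrites as $D_{\mathrm{K}}=D-B^{\mathsf{T}}A^{-1}BJ$.

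With the static feedback applied, the closed loop takes the form $\tfrac{d}{dt}x=\tilde{A}x+w$, $z=\tilde{C}x$, where $\tilde{A}=A-B\funof{I+D_{\mathrm{K}}D}^{-1}D_{\mathrm{K}}C$ and $\tilde{C}=\bm{I\\-D_{\mathrm{K}}}\funof{I+DD_{\mathrm{K}}}^{-1}C$. Well-posedness (invertibility of $I+D_{\mathrm{K}}D$) should follow from combining the passivity of $D$ and $D_{\mathrm{K}}$ with the strict positivity $D_{22}+B_2^{\mathsf{T}}A^{-1}B_2\succ{}0$. The plan is then to substitute $X=-A^{-1}$ into the bounded real inequality
\[
X\tilde{A}+\tilde{A}^{\mathsf{T}}X+\gamma^{-2}X^2+\tilde{C}^{\mathsf{T}}\tilde{C}\prec{}0,
\]
clear the matrix inverses by pre- and post-multiplying by $\funof{I+DD_{\mathrm{K}}}^{\mathsf{T}}$ and $\funof{I+DD_{\mathrm{K}}}$, and then apply the identities $C=JB^{\mathsf{T}}$, $JDJ=D^{\mathsf{T}}$ and $D_{\mathrm{K}}=D-B^{\mathsf{T}}A^{-1}BJ$ to effect the cancellations that render the inequality equivalent to a simple condition on $\gamma$.

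The matching lower bound is obtained by casting \Cref{prob:3} as an instance of \Cref{prob:1} and invoking the \gls{lmi} version of \Cref{thm:3}. Since the process signature satisfies $\Sigma_{\mathrm{int}}=-I$, the two Riccati / \gls{lmi} conditions of the standard $H_\infty{}$ solution collapse to a single one via $\Sigma_{\mathrm{int}}Y\Sigma_{\mathrm{int}}=Y$, and the resulting characterisation of the optimal $\gamma$ must coincide with the threshold certified in the previous paragraph.

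The main obstacle is the algebraic verification outlined in the second paragraph. The antisymmetric part of $D_{\mathrm{K}}$, which captures both the transformer couplings $D_{12}$ and the cross term $B_1^{\mathsf{T}}A^{-1}B_2$, interacts nontrivially with the feedthrough $D$ through $JDJ=D^{\mathsf{T}}$, and it is the cancellation of these cross terms against $\tilde{C}^{\mathsf{T}}\tilde{C}$ that drives the verification. A potentially cleaner alternative is to first apply a similarity transformation that block-triangularises the closed loop using $C=JB^{\mathsf{T}}$, reducing the bounded real inequality to a condition involving only the block matrix from \cref{eq:extr}.
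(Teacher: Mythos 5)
Your reduction of the theorem to ``the proposed static controller attains the infimum over all stabilising controllers'' is the right idea, but both halves of your argument have concrete problems, and neither matches what the paper actually does. For the lower bound, \Cref{thm:3} does not apply to \Cref{prob:3}: casting \cref{eq:ss3} as an instance of \Cref{prob:1} gives $B_1=I$ (the disturbance enters the state directly) and $C_1=\bm{C\\0}$, so the hypothesis \cref{eq:sym1} would require $-\Sigma_{\mathrm{int}}B_1=C_1^{\mathsf{T}}\Sigma_{\mathrm{ext}}$, which fails in general (already dimensionally, since $B_1$ is $n\times n$ while $C_1^{\mathsf{T}}\Sigma_{\mathrm{ext}}$ is not; the measurement feedthrough $y=Cx+Du$ also falls outside the \Cref{prob:1} template). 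Even if the signature collapse held, it would only reduce two LMIs to one rather than produce a closed-form optimal $\gamma$ that you could match against your certificate, so ``must coincide with the threshold'' is an assertion, not a proof. The paper's lower bound is far more elementary: $\norm{T_{zw}}_{H_\infty}\geq\norm{T_{zw}\funof{0}}_2$, and since $T_{zw}\funof{0}$ depends on the controller only through the constant matrix $K\funof{0}$, minimising $\norm{T_{zw}\funof{0}}_2$ over all real matrices $\tilde{K}$ (a least-squares problem whose minimiser is $\tilde{K}=G\funof{0}^{\mathsf{T}}$) bounds every stabilising controller at once.

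For the upper bound, your plan of substituting $X=-A^{-1}$ into the bounded real inequality is unverified, and you yourself flag the algebra as the main obstacle; note also that the terms $X\tilde{A}$ and $\tilde{C}^{\mathsf{T}}\tilde{C}$ do not carry the factors $\funof{I+DD_{\mathrm{K}}}^{-1}$ symmetrically, so a single congruence cannot clear them, and a strict, $\gamma$-independent certificate cannot exist at $\gamma=\gamma^*$ where the norm is attained. The paper avoids the bounded real lemma entirely here: after passing to chain (hybrid) coordinates in which $\bar{A}\prec 0$, $\bar{B}=\bar{C}^{\mathsf{T}}$, $\bar{D}\succeq 0$ and the controller becomes a symmetric $\bar{K}\succeq 0$, the Woodbury identity puts the closed loop in the form $M\funof{sI+X}^{-1}$ with $X=-\bar{A}+\bar{B}\bar{K}\funof{I+\bar{D}\bar{K}}^{-1}\bar{B}^{\mathsf{T}}\succ 0$; the closed-loop state matrix is thus symmetric negative definite, the frequency response peaks at $\omega=0$, and $\norm{T_{zw}}_{H_\infty}=\norm{T_{zw}\funof{0}}_2=\gamma^*$. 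Your closing suggestion of a similarity transformation exploiting $C=JB^{\mathsf{T}}$ is exactly where the real work lies, but carrying it out leads to the paper's chain-form computation with the closed-loop matrix $-\tilde{A}$, not to a Lyapunov certificate built from $-A^{-1}$.
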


The proof of \Cref{thm:5} is given at the end of the subsection.

\begin{remark}\label{rem:lct}
An entirely analogous result to \Cref{thm:5} holds for RCT networks. In particular if $A,B,C$ and $D$ are as in entry {\bf{RCT}} of \Cref{tab:1}, with the strengthening
\[
\bm{
-A&B_1\\B_1^{\mathsf{T}}&D_{11}
}\succ{}0,
\]
then there is a controller satisfying \Cref{prob:3} if and only if the controller
\[
  \mmfulla{A_{\mathrm{K}}}{B_{\mathrm{K}}}{C_{\mathrm{K}}}{D_{\mathrm{K}}}=
  \mmfulla{}{}{}{
  \begin{bmatrix}
  D_{11}&D_{12}\\-D_{12}^{\mathsf{T}}&D_{22}
  \end{bmatrix}
  -\begin{bmatrix}
  B_1^{\mathsf{T}}\\B_2^{\mathsf{T}}
  \end{bmatrix}A^{-1}
  \begin{bmatrix}-B_1&B_2
  \end{bmatrix}
  }
  \]
satisfies \Cref{prob:3}.
\end{remark}

\begin{example}\label{ex:3}
The optimal control law in \Cref{thm:5} has a curious interpretation in terms of the electrical dual, which we will now investigate. This gives another example of an optimal controller where the electrical network perspective gives a distributed implementation even when the control law isn't sparse. Suppose we are given a planar RC network, such as the network illustrated in \Cref{fig:dual}, and would like to both synthesise and implement the optimal control law from \Cref{thm:5} using an electrical network. That is, we would like to optimally control the RC network by physically connecting another electrical network to the driving point terminal pairs. Just as in \Cref{ex:2}, we can synthesise the control law from \Cref{thm:5} using an electrical network. In this case the appropriate control law is obtained by replacing all the capacitors with open circuits. However connecting the two circuits together will not necessarily give the desired behaviour. 

This can be understood by observing that when describing an electrical network with an input-output system, if the input $u_k$ is a current, then the output $y_k$ is a voltage, and vice versa. Also for the feedback interconnection of two networks to make sense electrically, if the output $y_k$ of the network that we wish to control is a voltage, then the input to the network that synthesises the control law must also be a voltage (with the same matching for currents). However if we sythesise the controller network as described above, if the output of the process corresponds to a voltage, the input to the controller corresponds to a current. To correct this and obtain a control law that can be implemented through an electrical interconnection, we must sythesise the electrical dual of the controller network described above. For planar networks this can always be done without the use of transformers. The required steps are illustrated in \Cref{fig:dual1}. Note that if the network is not planar we may need to use transformers to synthesise the appropriate control law.

As a final remark we note that even if we follow the above procedure, unless each driving point forms a port, it will be necessary to interconnect the two networks using transformers. Transformless synthesis is an important area, particularly in the context of mechanical networks \cite{Smi02}, and the above issue has connections to a number of unsolved problems. We point the interested reader to \cite{Wil10a,Che15}, as well as to \cite[Problem 2]{Hug20} and \cite[\S{21.8}]{HMS18}.
\end{example}

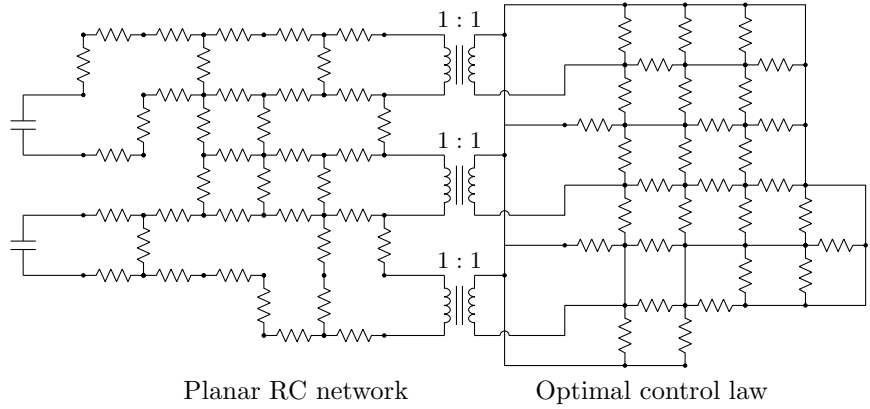
\begin{figure}
\centering
    \ctikzset{bipoles/length=.56cm}
    \ctikzset{bipoles/thickness=1}
\begin{tikzpicture}[font=\normalsize,scale=.4]
    \draw (-14,4) to [C](-14,2);
    \draw (-14,8) to [C](-14,6);
    \draw (0,2) to [L] (0,0);
    \draw (0.4,0.35) to (0.4,1.65);
    \draw (0.6,0.35) to (0.6,1.65);
    \draw (1,0) to [L] (1,2);
    \draw (0.5,2.5) node {\small $1:1$};
    \draw (0,6) to [L] (0,4);
    \draw (0.4,4.35) to (0.4,5.65);
    \draw (0.6,4.35) to (0.6,5.65);
    \draw (1,4) to [L] (1,6);
    \draw (0.5,6.5) node {\small $1:1$};
    \draw (0,10) to [L] (0,8);
    \draw (0.4,8.35) to (0.4,9.65);
    \draw (0.6,8.35) to (0.6,9.65);
    \draw (1,8) to [L] (1,10);
    \draw (0.5,10.5) node {\small $1:1$};
    \draw (-14,4) to [short] (-12,4);
    \draw (-12,2) to [short] (-14,2);
    \draw (-14,8) to [short] (-12,8);
    \draw (-12,6) to [short] (-14,6);
    \draw (-2,0) to [short] (0,0);
    \draw (-2,2) to [short] (0,2);
    \draw (-2,4) to [short] (0,4);
    \draw (-2,6) to [short] (0,6);
    \draw (-2,8) to [short] (0,8);
    \draw (-2,10) to [short] (0,10);
    \draw (-12,2) to [R,*-*] (-10,2);
    \draw (-10,2) to [R,*-*] (-8,2) to [R,*-*] (-6,2) to [R,*-*](-6,0) to [R,*-*](-4,0) to [R,*-*] (-2,0);
    \draw (-12,4) to[R,*-*](-10,4);
    \draw (-10,4) to[R,*-*](-8,4) to[R,*-*](-6,4) to[R,*-*] (-4,4) to[R,*-*] (-2,4) to[R,*-*](-2,2);
    \draw (-12,6) to[R,*-*] (-10,6) to[R,*-*] (-10,8) to[R,*-*] (-8,8);
    \draw (-8,8) to[R,*-*] (-6,8) to[R,*-*] (-4,8) to[R,*-*] (-2,8) to[R,*-*] (-2,6);
    \draw (-12,8) to[R,*-*] (-12,10) to[R,*-*] (-10,10) to[R,*-*] (-8,10);
    \draw (-8,10) to[R,*-*] (-6,10) to[R,*-*] (-4,10) to[R,*-*] (-2,10);
    \draw (-8,4) to[R,*-*] (-8,6) to[R,*-*] (-8,8) to[R,*-*] (-8,10);
    \draw (-8,6) to[R,*-*] (-6,6) to[R,*-*] (-4,6) to[R,*-*] (-2,6);
    \draw (-4,0) to[R,*-*] (-4,2) to[R,*-*] (-4,4) to[R,*-*] (-4,6);
    \draw (-10,2) to [R,*-*] (-10,4);
    \draw (-6,4) to [R,*-*] (-6,6) to [R,*-*] (-6,8);
    \draw (-4,8) to [R,*-*] (-4,10);
    
    \draw[black] (3+3,-1) to [R,*-*,color=black] (3+3,1);
    \draw[black] (3+3,3) to[R,*-*,color=black] (3+3,5) to[R,*-*,color=black] (3+3,7) to[R,*-*,color=black] (3+3,9) to[R,*-*,color=black] (3+3,11);
    \draw[black] (5+3,-1) to [R,*-*,color=black] (5+3,1);
    \draw[black] (5+3,3) to[R,*-*,color=black] (5+3,5) to[R,*-*,color=black] (5+3,7) to[R,*-*,color=black] (5+3,9) to[R,*-*,color=black] (5+3,11);
    \draw[black] (7+3,1) to[R,*-*,color=black] (7+3,3) to[R,*-*,color=black] (7+3,5) to[R,*-*,color=black] (7+3,7) to[R,*-*,color=black] (7+3,9) to[R,*-*,color=black] (7+3,11);
    \draw[black] (9+3,1) to[R,*-*,color=black] (9+3,3) to[R,*-*,color=black] (9+3,5);
    \draw[black] (7+3,1) to[R,*-*,color=black] (5+3,1) to[R,*-*,color=black](3+3,1);  
    \draw[black] (11+3,3) to[R,*-*,color=black] (9+3,3);
    \draw[black] (5+3,3) to[R,*-*,color=black] (3+3,3)to[R,*-*,color=black] (1+3,3);
    \draw[black] (9+3,5) to[R,*-*,color=black] (7+3,5) to[R,*-*,color=black](5+3,5) to[R,*-*,color=black] (3+3,5);  
    \draw[black] (9+3,7) to[R,*-*,color=black] (7+3,7) to[R,*-*,color=black](5+3,7);
    \draw[black] (3+3,7) to[R,*-*,color=black] (1+3,7);
    \draw[black] (9+3,9) to[R,*-*,color=black] (7+3,9);
    \draw[black] (5+3,9) to[R,*-*,color=black] (3+3,9);
    \draw[black] (5+3,-1) to (-1+3,-1) to (-1+3,11) to (9+3,11) to (9+3,5) to (11+3,5) to (11+3,1) to (7+3,1);
    \draw[black] (1+3,3) to (-1+3,3);
    \draw[black] (1+3,7) to (-1+3,7);
    \draw[black] (9+3,3) to (5+3,3) to (5+3,1); 
    \draw[black] (3+3,3) to (3+3,1) to (1+3,1);
    \draw[black] (3+3,5) to (1+3,5);
    \draw[black] (5+3,7) to (3+3,7);
    \draw[black] (7+3,9) to (5+3,9);
    \draw[black] (3+3,9) to (1+3,9);
    \draw (1,0) to (1.85,0);
    \draw (1.85,0) arc (180:0:.15);
    \draw (2.15,0) to(4,0) to (4,1);
    \draw (1,2) to [short,-*] (2,2);
    \draw (1,4) to (1.85,4);
    \draw (1.85,4) arc (180:0:.15);
    \draw (2.15,4) to(4,4) to (4,5);
    \draw (1,6) to [short,-*] (2,6);
    \draw (1,8) to (1.85,8);
    \draw (1.85,8) arc (180:0:.15);
    \draw (2.15,8) to(4,8) to (4,9);
    \draw (1,10) to [short,-*] (2,10);

    \end{tikzpicture}

    {$\;\quad\quad\,$ Planar RC network $\,\qquad\qquad$ Optimal control law}

    \caption{\label{fig:dual}Illustration of the controller implementation in \Cref{ex:3}. The objective is to implement the optimal control law from \Cref{thm:5} for the given RC network (network constructed out of resistors and capacitors) electrically. To do this, the terminals across the \emph{k}-th driving point in the RC network are connected to the terminals across the \emph{k}-th driving point in the optimal control law network using a transformer with turns ratio $1:1$. The optimal control law is the electrical dual (see e.g. \cite{Gui53}) of the original circuit with all capacitors open circuited, and is constructed according to {\footnotesize \textsc{({\tiny A})}}--{\footnotesize \textsc{({\tiny C})}} in \Cref{fig:dual1}.}
\end{figure}

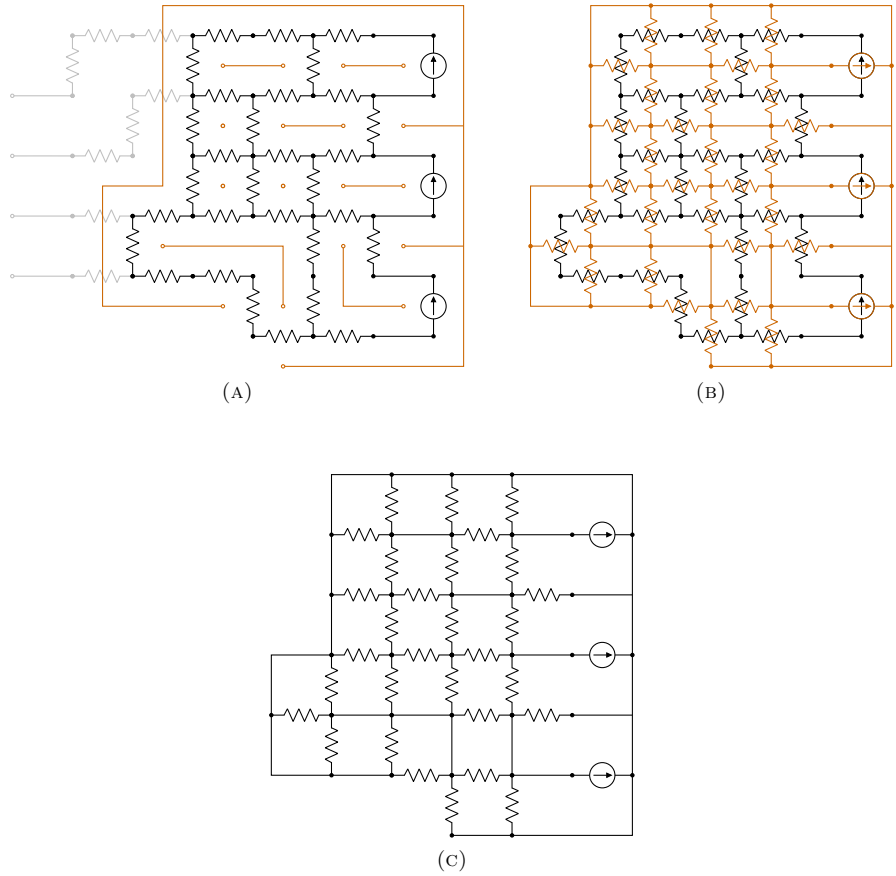
\begin{figure} 
\centering   
\ctikzset{bipoles/thickness=1}
\ctikzset{bipoles/length=1.4cm}
\subfloat[]{
  \begin{tikzpicture}[font=\normalsize,scale=.4,transform shape]
    \draw (0,0) to [I,*-*](0,2);
    \draw (0,4) to [I,*-*](0,6);
    \draw (0,8) to [I,*-*](0,10);
    \draw[lightgray] (-14,4) to [short,o-*,color=lightgray] (-12,4);
    \draw[lightgray] (-12,2) to [short,*-o,color=lightgray] (-14,2);
    \draw[lightgray] (-14,8) to [short,o-*,color=lightgray] (-12,8);
    \draw[lightgray] (-12,6) to [short,*-o,color=lightgray] (-14,6);
    \draw (-2,0) to [short] (0,0);
    \draw (-2,2) to [short] (0,2);
    \draw (-2,4) to [short] (0,4);
    \draw (-2,6) to [short] (0,6);
    \draw (-2,8) to [short] (0,8);
    \draw (-2,10) to [short] (0,10);
    \draw[lightgray] (-12,2) to [R,*-*,color=lightgray] (-10,2);
    \draw (-10,2) to [R,*-*] (-8,2) to [R,*-*] (-6,2) to [R,*-*](-6,0) to [R,*-*](-4,0) to [R,*-*] (-2,0);
    \draw[lightgray] (-12,4) to[R,*-*,color=lightgray](-10,4);
    \draw (-10,4) to[R,*-*](-8,4) to[R,*-*](-6,4) to[R,*-*] (-4,4) to[R,*-*] (-2,4) to[R,*-*](-2,2);
    \draw[lightgray] (-12,6) to[R,*-*,color=lightgray] (-10,6) to[R,*-*,color=lightgray] (-10,8) to[R,*-*,color=lightgray] (-8,8);
    \draw (-8,8) to[R,*-*] (-6,8) to[R,*-*] (-4,8) to[R,*-*] (-2,8) to[R,*-*] (-2,6);
    \draw[lightgray] (-12,8) to[R,*-*,color=lightgray] (-12,10) to[R,*-*,color=lightgray] (-10,10) to[R,*-*,color=lightgray] (-8,10);
    \draw (-8,10) to[R,*-*] (-6,10) to[R,*-*] (-4,10) to[R,*-*] (-2,10);
    \draw (-8,4) to[R,*-*] (-8,6) to[R,*-*] (-8,8) to[R,*-*] (-8,10);
    \draw (-8,6) to[R,*-*] (-6,6) to[R,*-*] (-4,6) to[R,*-*] (-2,6);
    \draw (-4,0) to[R,*-*] (-4,2) to[R,*-*] (-4,4) to[R,*-*] (-4,6);
    \draw (-10,2) to [R,*-*] (-10,4);
    \draw (-6,4) to [R,*-*] (-6,6) to [R,*-*] (-6,8);
    \draw (-4,8) to [R,*-*] (-4,10);
    \draw[orange!80!black] (-7,7) to[short,o-o,color=orange!80!black] (-7,7);
    \draw[orange!80!black] (-7,5) to[short,o-o,color=orange!80!black] (-7,5);
    \draw[orange!80!black] (-5,5) to[short,o-o,color=orange!80!black] (-5,5);
    \draw[orange!80!black] (-5,-1) to [short,o-,color=orange!80!black](1,-1) to (1,11) to (-9,11) to (-9,5) to (-11,5) to (-11,1) to [short,color=orange!80!black,-o] (-7,1);
    \draw[orange!80!black] (-1,3) to [short,o-,color=orange!80!black] (1,3);
    \draw[orange!80!black] (-1,7) to[short,o-,color=orange!80!black] (1,7);
    \draw[orange!80!black] (-9,3) to  [short,o-,color=orange!80!black](-5,3) to [short,-o,color=orange!80!black] (-5,1);  
    \draw[orange!80!black] (-3,3) [short,o-,color=orange!80!black] to (-3,1) to [short,-o,color=orange!80!black] (-1,1);
    \draw[orange!80!black] (-3,5) to[short,o-o,color=orange!80!black] (-1,5);
    \draw[orange!80!black] (-5,7) to [short,o-o,color=orange!80!black](-3,7);
    \draw[orange!80!black] (-7,9) to [short,o-o,color=orange!80!black](-5,9);
    \draw[orange!80!black] (-3,9) to[short,o-o,color=orange!80!black] (-1,9);
  \end{tikzpicture}
}\hspace{.5cm}
\subfloat[]{  
  \begin{tikzpicture}[font=\normalsize,scale=.4,transform shape]
    \draw (0,0) to [I,*-*](0,2);
    \draw (0,4) to [I,*-*](0,6);
    \draw (0,8) to [I,*-*](0,10);
    
    \draw (-2,0) to [short] (0,0);
    \draw (-2,2) to [short] (0,2);
    \draw (-2,4) to [short] (0,4);
    \draw (-2,6) to [short] (0,6);
    \draw (-2,8) to [short] (0,8);
    \draw (-2,10) to [short] (0,10);
    \draw (-10,2) to [R,*-*] (-8,2) to [R,*-*] (-6,2) to [R,*-*](-6,0) to [R,*-*](-4,0) to [R,*-*] (-2,0);
    \draw (-10,4) to[R,*-*](-8,4) to[R,*-*](-6,4) to[R,*-*] (-4,4) to[R,*-*] (-2,4) to[R,*-*](-2,2);
    \draw (-8,8) to[R,*-*] (-6,8) to[R,*-*] (-4,8) to[R,*-*] (-2,8) to[R,*-*] (-2,6);
    \draw (-8,10) to[R,*-*] (-6,10) to[R,*-*] (-4,10) to[R,*-*] (-2,10);
    \draw (-8,4) to[R,*-*] (-8,6) to[R,*-*] (-8,8) to[R,*-*] (-8,10);
    \draw (-8,6) to[R,*-*] (-6,6) to[R,*-*] (-4,6) to[R,*-*] (-2,6);
    \draw (-4,0) to[R,*-*] (-4,2) to[R,*-*] (-4,4) to[R,*-*] (-4,6);
    \draw (-10,2) to [R,*-*] (-10,4);
    \draw (-6,4) to [R,*-*] (-6,6) to [R,*-*] (-6,8);
    \draw (-4,8) to [R,*-*] (-4,10);
    \draw[orange!80!black](-1,1) to [I,*-*,color=orange!80!black] (1,1);
    \draw[orange!80!black](-1,5) to [I,*-*,color=orange!80!black] (1,5);
    \draw[orange!80!black](-1,9) to [I,*-*,color=orange!80!black] (1,9);
    \draw[orange!80!black] (-3,-1) to [R,*-*,color=orange!80!black] (-3,1);
    \draw[orange!80!black] (-3,3) to[R,*-*,color=orange!80!black] (-3,5) to[R,*-*,color=orange!80!black] (-3,7) to[R,*-*,color=orange!80!black] (-3,9) to[R,*-*,color=orange!80!black] (-3,11);
    \draw[orange!80!black] (-5,-1) to [R,*-*,color=orange!80!black] (-5,1);
    \draw[orange!80!black] (-5,3) to[R,*-*,color=orange!80!black] (-5,5) to[R,*-*,color=orange!80!black] (-5,7) to[R,*-*,color=orange!80!black] (-5,9) to[R,*-*,color=orange!80!black] (-5,11);
    \draw[orange!80!black] (-7,1) to[R,*-*,color=orange!80!black] (-7,3) to[R,*-*,color=orange!80!black] (-7,5) to[R,*-*,color=orange!80!black] (-7,7) to[R,*-*,color=orange!80!black] (-7,9) to[R,*-*,color=orange!80!black] (-7,11);
    \draw[orange!80!black] (-9,1) to[R,*-*,color=orange!80!black] (-9,3) to[R,*-*,color=orange!80!black] (-9,5);
    \draw[orange!80!black] (-7,1) to[R,*-*,color=orange!80!black] (-5,1) to[R,*-*,color=orange!80!black](-3,1); 
    \draw[orange!80!black] (-11,3) to[R,*-*,color=orange!80!black] (-9,3);
    \draw[orange!80!black] (-5,3) to[R,*-*,color=orange!80!black] (-3,3)to[R,*-*,color=orange!80!black] (-1,3);
    \draw[orange!80!black] (-9,5) to[R,*-*,color=orange!80!black] (-7,5) to[R,*-*,color=orange!80!black](-5,5) to[R,*-*,color=orange!80!black] (-3,5);  
    \draw[orange!80!black] (-9,7) to[R,*-*,color=orange!80!black] (-7,7) to[R,*-*,color=orange!80!black](-5,7);
    \draw[orange!80!black] (-3,7) to[R,*-*,color=orange!80!black] (-1,7);
    \draw[orange!80!black] (-9,9) to[R,*-*,color=orange!80!black] (-7,9);
    \draw[orange!80!black] (-5,9) to[R,*-*,color=orange!80!black] (-3,9);
    \draw[orange!80!black] (-5,-1) to (1,-1) to (1,11) to (-9,11) to (-9,5) to (-11,5) to (-11,1) to (-7,1);
    \draw[orange!80!black] (-1,3) to (1,3);
    \draw[orange!80!black] (-1,7) to (1,7);
    \draw[orange!80!black] (-9,3) to (-5,3) to (-5,1);  
    \draw[orange!80!black] (-3,3) to (-3,1) to (-1,1);
    \draw[orange!80!black] (-3,5) to (-1,5);
    \draw[orange!80!black] (-5,7) to (-3,7);
    \draw[orange!80!black] (-7,9) to (-5,9);
    \draw[orange!80!black] (-3,9) to (-1,9);
  \end{tikzpicture}
    }

\vspace{.5cm}
\subfloat[]{   
  \begin{tikzpicture}[font=\normalsize,scale=.4,transform shape]
    \draw[black](-1,1) to [I,*-*,color=black] (1,1);
    \draw[black](-1,5) to [I,*-*,color=black] (1,5);
    \draw[black](-1,9) to [I,*-*,color=black] (1,9);
    \draw[black] (-3,-1) to [R,*-*,color=black] (-3,1);
    \draw[black] (-3,3) to[R,*-*,color=black] (-3,5) to[R,*-*,color=black] (-3,7) to[R,*-*,color=black] (-3,9) to[R,*-*,color=black] (-3,11);
    \draw[black] (-5,-1) to [R,*-*,color=black] (-5,1);
    \draw[black] (-5,3) to[R,*-*,color=black] (-5,5) to[R,*-*,color=black] (-5,7) to[R,*-*,color=black] (-5,9) to[R,*-*,color=black] (-5,11);
    \draw[black] (-7,1) to[R,*-*,color=black] (-7,3) to[R,*-*,color=black] (-7,5) to[R,*-*,color=black] (-7,7) to[R,*-*,color=black] (-7,9) to[R,*-*,color=black] (-7,11);
    \draw[black] (-9,1) to[R,*-*,color=black] (-9,3) to[R,*-*,color=black] (-9,5);
    \draw[black] (-7,1) to[R,*-*,color=black] (-5,1) to[R,*-*,color=black](-3,1); 
    \draw[black] (-11,3) to[R,*-*,color=black] (-9,3);
    \draw[black] (-5,3) to[R,*-*,color=black] (-3,3)to[R,*-*,color=black] (-1,3);
    \draw[black] (-9,5) to[R,*-*,color=black] (-7,5) to[R,*-*,color=black](-5,5) to[R,*-*,color=black] (-3,5);  
    \draw[black] (-9,7) to[R,*-*,color=black] (-7,7) to[R,*-*,color=black](-5,7);
    \draw[black] (-3,7) to[R,*-*,color=black] (-1,7);
    \draw[black] (-9,9) to[R,*-*,color=black] (-7,9);
    \draw[black] (-5,9) to[R,*-*,color=black] (-3,9);
    \draw[black] (-5,-1) to (1,-1) to (1,11) to (-9,11) to (-9,5) to (-11,5) to (-11,1) to (-7,1);
    \draw[black] (-1,3) to (1,3);
    \draw[black] (-1,7) to (1,7);
    \draw[black] (-9,3) to (-5,3) to (-5,1);  
    \draw[black] (-3,3) to (-3,1) to (-1,1);
    \draw[black] (-3,5) to (-1,5);
    \draw[black] (-5,7) to (-3,7);
    \draw[black] (-7,9) to (-5,9);
    \draw[black] (-3,9) to (-1,9);
  \end{tikzpicture}
}
\caption{\label{fig:dual1}Construction of the optimal control law for the network from \Cref{fig:dual}. {\footnotesize \textsc{({\tiny A})}} First draw the RC network with the capacitors removed. This may leave some resistor branches hanging. These are shaded light grey, and can be ignored for the rest of this process. Since the remaining network is planar, its graph partitions the plane into regions. Place a node in each region, including the region exterior to the circuit, as illustrated in orange. To simplify the drawing of the next step, multiple nodes connected by short circuits are drawn within each region (this is equivalent to a single node). {\footnotesize \textsc{({\tiny B})}} Each resistor in the original circuit lies between two regions. For each resistor, draw in a new resistor that connects these regions. Set the resistance of the new resistor to be the inverse of that of the original resistor. Follow the same procedure for the driving points, adding in a new driving point connecting the regions. {\footnotesize \textsc{({\tiny C})}} The electrical dual.}
\end{figure}

We now give the proof of \Cref{thm:5}.

\begin{proof}
\textit{(2)} $\Rightarrow{}$\textit{(1)}: Immediate.

\textit{(1)} $\Rightarrow{}$\textit{(2)}: The proof will consist of two steps. In step 1 we will use a least squares argument to find a lower bound on the achievable $\gamma$ (that is a number $\gamma^*$ such that the \Cref{prob:3} is solvable only if $\gamma>\gamma^*$). In step 2 we will show that the controller in the theorem statement achieves $\norm{T_{zw}}_\infty=\gamma^*$.

Step 1: First note the following simple lower bound on the $H_\infty{}$ norm of a (stable) transfer function
\[
\norm{G}_\infty{}=\sup_{\omega\in\R}\norm{G\jw}_2\geq{}\norm{G\funof{0}}_2.
\]
Next note that
\[
T_{zw}\s=\bm{I\\-K\s}\funof{I+G\s{}K\s}^{-1}C\funof{sI-A}^{-1},
\]
where $K\s=C_{\mathrm{K}}\funof{sI-A_{\mathrm{K}}}^{-1}B_{\mathrm{K}}+D_{\mathrm{K}}$. We can thus find a lower bound on the achievable $H_\infty{}$ performance (as suggested in \cite{BPR20}) by solving the static optimisation problem
\[
\min\cfunof{\norm{\bm{I\\-\tilde{K}}\funof{I+G\funof{0}\tilde{K}}^{-1}CA^{-1}}_2:\tilde{K}\text{ has entries in }\R}.
\]
A least squares argument (see \cite[Lemma 1]{PBR19}) shows that $\tilde{K}=G\funof{0}^{\mathsf{T}}$ minimises the above, and so $\norm{T_{zw}}_\infty{}\geq{}\gamma^*$, where
\[
\gamma^*=\norm{\bm{I\\-G\funof{0}^{\mathsf{T}}}\funof{I+G\funof{0}G\funof{0}^{\mathsf{T}}}^{-1}CA^{-1}}_2.
\]

Step 2: We will now show that $K\s=G\funof{0}^{\mathsf{T}}$ achieves the lower bound from step 1. First it is convenient to rewrite \cref{eq:ss3} in chain form, in which $u$ and $y$ are split in line with the partitioning of the matrices in the {\bf{RLT}} entry of \Cref{tab:1}:
\[
\begin{aligned}
\tfrac{d}{dt}x&=\bar{A}x+w+\bar{B}_1u_1+\bar{B}_2y_2,\\
y_1&=\bar{C}_1u_1+\bar{D}_{11}u_1+\bar{D}_{12}y_2,\\
u_2&=\bar{C}_2y_2+\bar{D}_{21}u_1+\bar{D}_{22}y_2.
\end{aligned}
\]
Going through the necessary algebra we find that
\[
\scalemath{.8}{
  \sqfunof{\begin{array}{c;{2pt/2pt}cc}
\bar{A}&\bar{B}_1&\bar{B}_2\\\hdashline[2pt/2pt]
\bar{C}_1&\bar{D}_{11}&\bar{D}_{12}\\
\bar{C}_2&\bar{D}_{21}&\bar{D}_{22}
  \end{array}
  }}
  =
 \scalemath{.8}{
  \sqfunof{\begin{array}{c;{2pt/2pt}cc}
A+B_2D_{22}^{-1}B_2^{\mathsf{T}}&B_1+B_2D_{22}^{-1}D_{12}^{\mathsf{T}}&B_2D_{22}^{-1}\\\hdashline[2pt/2pt]
B_1^{\mathsf{T}}+D_{12}D_{22}^{-1}B_2^{\mathsf{T}}&D_{11}+D_{12}D_{22}^{-1}D_{12}^{\mathsf{T}}&D_{12}D_{22}^{-1}\\
D_{22}^{-1}B_2^{\mathsf{T}}&D_{22}^{-1}D_{12}^{\mathsf{T}}&D_{22}^{-1}
  \end{array}
  }}.
\]
Observe in particular that $\bar{A}\prec{}0,\bar{B}=\bar{C}^{\mathsf{T}}$ and $\bar{D}\succeq{}0$ (these follow from the conditions in the {\bf{RLT}} entry in \Cref{tab:1}, with the strengthening in \cref{eq:extr}). We can similarly rewrite the control law $u=-G\funof{0}^{\mathsf{T}}{}y$ as
\[
\bm{u_1\\y_2}=-\underbrace{\funof{\bar{D}-\bar{B}^{\mathsf{T}}\bar{A}^{-1}\bar{B}}}_{\eqqcolon{}\bar{K}}\bm{y_1\\u_2},
\]
and note that $\bar{K}\succeq{}0$. We can then rewrite $T_{zw}$ in terms of these new variables as
\[
\begin{aligned}
PT_{zw}&=\bm{I\\-\bar{K}}\funof{I+\bar{G}\s\bar{K}}^{-1}\bar{C}\funof{sI-\bar{A}}^{-1},\\
&=\bm{I\\-\bar{K}}\funof{I+\bar{D}\bar{K}+\bar{B}^{\mathsf{T}}\funof{sI-\bar{A}}^{-1}\bar{B}\bar{K}}\bar{B}^{\mathsf{T}}\funof{sI-\bar{A}}^{-1},
\end{aligned}
\]
where $P$ is a permutation matrix. Applying the Woodbury matrix identity and rearranging then shows that the above equals
\[
\begin{aligned}
\bm{I\\-\bar{K}}\funof{I+\bar{D}\bar{K}}^{-1}\bar{B}^{\mathsf{T}}\funof{sI-\bar{A}+\bar{B}\bar{K}\funof{I+\bar{D}\bar{K}}^{-1}\bar{B}^{\mathsf{T}}}^{-1}.
\end{aligned}
\]
Note we can be sure that \funof{I+\bar{D}\bar{K}} is invertible since $\bar{D}\succeq{}0$ and $\bar{K}\succeq{}0$, and therefore $\det\funof{\lambda{}I+\bar{D}\bar{K}}$ is zero only if $\lambda\leq{}0$. Now let
\[
\begin{aligned}
X&=-\bar{A}+\bar{B}\bar{K}\funof{I+\bar{D}\bar{K}}^{-1}\bar{B}^{\mathsf{T}},\\
&=-\bar{A}+\bar{B}\bar{K}^{\frac{1}{2}}\funof{I+\bar{K}^{\frac{1}{2}}\bar{D}\bar{K}^{\frac{1}{2}}}^{-1}\bar{K}^{\frac{1}{2}}\bar{B}^{\mathsf{T}},
\end{aligned}.
\]
from which we see that $X\succ{}0$. Since from step 1, $\norm{PT_{zw}\funof{0}}_2=\gamma^*$,
\[
\norm{\bm{I\\-\bar{K}}\funof{I+\bar{D}\bar{K}}^{-1}\bar{B}^{\mathsf{T}}X^{-1}}_2=\gamma^*,
\]
which implies that
\[
\begin{aligned}
\norm{T_{zw}}_\infty{}&=\norm{\bm{I\\-\bar{K}}\funof{I+\bar{D}\bar{K}}^{-1}\bar{B}^{\mathsf{T}}X^{-1}\funof{sX^{-1}+I}^{-1}}_\infty{},\\
&\leq{}\gamma^*\norm{\funof{sX^{-1}+I}^{-1}}_\infty{}=\gamma^*
\end{aligned}
\]
as required.
\end{proof}

\section{Conclusions}

The main contributions of the paper are to show that:
\begin{enumerate}
  \item the external behaviors of electrical networks constructed out of subsets of the reciprocal elements (resistors, inductors, capacitors and transformers) admit observable and controllable state-space realisations with highly structured $A$, $B$, $C$ and $D$ matrices;
  \item the structure in these matrices can be exploited to simplify, or solve analytically, the Riccati equations and \glspl{lmi} arising in a range of $H_2$ and $H_{\infty}$ optimal control problems.
\end{enumerate}
This reveals that some optimal controllers for systems that can be modelled by reciprocal networks inherit the structural properties of the network that describes the system. This can be used to give distributed or decentralised controller implementations, and the results are illustrated on examples motivated by constrained least squares problems, electrical power systems, consensus algorithms and heating networks.

\bibliographystyle{amsplain}
\bibliography{references.bib}

\end{document}